\newtheorem{thm}{Theorem}[section]
\newtheorem{pro}[thm]{Proposition}
\newtheorem{lem}[thm]{Lemma}
\newtheorem{cor}[thm]{Corollary}
\theoremstyle{definition}
\theoremstyle{remark}
\newtheorem{rmk}[thm]{Remark}
\newtheorem{prb}{Problem}
\numberwithin{equation}{section}
\def\J{\mathscr{J}}
\def\D{\mathscr{D}}
\def\R{\mathscr{R}}
\def\L{\mathscr{L}}
\def\H{\mathscr{H}}
\def\cE{\mathcal{E}}
\def\es{\varnothing}
\def\Ra{\Rightarrow}
\def\lra{\leftrightarrow}
\def\ol#1{\overline{#1}}
\def\ig#1{\mathsf{IG}(#1)}
\def\pre#1#2{\langle #1 \; | \; #2 \rangle}
\renewcommand{\le}{\leqslant}
\def\mathcenterto#1#2{\mathclap{\phantom{#1}\mathclap{#2}}\phantom{#1}}
\let\old@widetilde\widetilde
\def\widetildeto#1#2{\mathcenterto{#2}{\old@widetilde{\mathcenterto{#1}{#2\,}}}}
\def\widetilde{\widetildeto{||||}}
\def\tR{\widetilde{\mathscr{R}}}
\def\tL{\widetilde{\mathscr{L}}}
\DeclareMathOperator\Rat{Rat} 
\DeclareMathOperator\fix{fix} 
\begin{document}


\title[Free idempotent generated semigroups]%
{A group-theoretical interpretation of the word problem for free idempotent generated semigroups} 


\author{YANG DANDAN}

\address{School of Mathematics and Statistics, Xidian University, Xi'an 710071, P.R.China}

\email{ddyang@xidian.edu.cn}

\author{IGOR DOLINKA}

\address{Department of Mathematics and Informatics, University of Novi Sad, Trg Dositeja Obradovi\'ca 4,
21101 Novi Sad, Serbia}

\email{dockie@dmi.uns.ac.rs}

\thanks{The research of the second-named author was supported by the Ministry of Education, Science, and Technological
Development of the Republic of Serbia through the Grant No.174019.}

\author{VICTORIA GOULD}

\address{Department of Mathematics, University of York, Heslington, York YO10 5DD, UK}

\email{victoria.gould@york.ac.uk}



\subjclass[2010]{Primary 20M05; Secondary 20F10, 68Q70}


\keywords{Free idempotent generated semigroup; Biordered set; Word problem; Rational subset}




\begin{abstract}
The set of idempotents of any semigroup carries the structure of a biordered set, which contains a great deal of
information concerning the idempotent generated subsemigroup of the semigroup in question. This leads to the construction of a
free idempotent generated semigroup $\ig{\cE}$ -- the `free-est' semigroup with a given biordered set $\cE$ of idempotents.
We show that when $\cE$ is finite, the word problem for $\ig{\cE}$ is equivalent to a family of constraint satisfaction problems 
involving rational subsets of direct products of pairs of maximal subgroups of $\ig{\cE}$. As an application, we obtain decidability
of the word problem for an important class of examples. Also, we prove that for finite $\cE$, $\ig{\cE}$ is always a weakly abundant
semigroup satisfying the congruence condition.
\end{abstract}


\maketitle



\section{Introduction}
\label{sec:intro}

Let $S$ be a semigroup, $E=E(S)$ the set of its idempotent elements, and let $\langle X\rangle$ denote the subsemigroup of $S$
generated by $X\subseteq S$. We say that $S$ is \emph{idempotent generated} if $S=\langle E\rangle$, that is, if every element
of $S$ can be written as a product of idempotents. Idempotent generated semigroups are ubiquitous in algebra. For example, it was
shown in \cite{Er,La} that every singular matrix over a division ring $Q$ is a product of idempotent matrices over $Q$; hence,
the idempotent generated subsemigroup of the full linear monoid $M_n(Q)$ coincides with the submonoid consisting of the singular
matrices and the identity matrix (basically, $M_n(Q)$ with its group of units $GL_n(Q)$ `torn off' and the identity reinstated). An analogous result is
proved by Howie \cite{Ho} for the transformation monoid $\mathcal{T}_n$ of all self-maps of a finite set with $n$ elements: every non-bijective
self-map of $\{1,\dots,n\}$ is a composition of idempotent transformations. In the same paper it was shown that every semigroup embeds into
an idempotent generated one, thus highlighting the importance of idempotent generated semigroups. Among numerous contributions
in this vein, let us also mention the more recent work of Putcha \cite{Pu} who fully characterised the reductive linear
algebraic monoids \cite{PuBook,ReBook} with the property that each non-unit is a product of idempotents.

Another key departure point for this paper is the extensive study of the structure of regular semigroups \cite{Na} by Nambooripad,
who made the crucial observation that the set of idempotents of any semigroup carries a certain `geometric' structure, called
the \emph{biordered set}, that is intimately related to the global structural properties of the idempotent generated subsemigroup of
the semigroup in question. He supplied an axiomatic approach to (regular) biordered sets. Easdown
later showed \cite{Ea2}
that these axioms are complete in the sense that every abstractly defined biordered set is isomorphic to a biordered set of
some semigroup (Nambooripad has proved this for regular biordered sets and regular semigroups). Therefore, there is no loss
in omitting the abstract definition and restricting our attention to biordered sets of concrete semigroups. So, for a semigroup $S$
and its set of idempotents $E$ we define the biordered set of idempotents of $S$ as the partial algebra
$$\cE_S=(E,\ast)$$ inheriting some of the products
from $S$ so that $e\ast f=ef$ whenever $e,f\in E$ is a \emph{basic pair}, that is, whenever $\{e,f\}\cap\{ef,fe\}\neq\es$. (Note that this condition implies that both $ef,fe$ are idempotents for a basic pair $e,f$: if,
for example, $ef=e$, then $(fe)^2=f(ef)e=fee=fe$.)
It is convenient to define two quasi-orders on $E$, $\le_r$ and $\le_\ell$, where $e\le_r f$ if and only if
$fe=e$ and $e\le_\ell f$ if and only if $ef=e$. Now $e,f$ form a basic pair if and only if
$\le_r\cup\le_\ell$ contains either $(e,f)$ or $(f,e)$. Furthermore, $\leqslant\; =\;
\le_r\cap\le_\ell$ is easily seen to be a partial order, and this is the \emph{natural partial order} on the idempotents $E$ of $S$,
while $\R$ and $\L$ will denote (in what will turn out to be a slight but welcome abuse of notation) the equivalences induced by
$\le_r$ and $\le_\ell$, respectively; finally, $\D=\R\vee\L$ is the least equivalence on $E$ containing both $\R$ and $\L$.
By \cite[Lemma 3.1]{DGR}, all these three equivalences are effectively computable from $\cE$ (provided $\cE$
is finite).

For a given biordered set $\cE=(E,\ast)$, there is a category of semigroups associated with $\cE$ as follows. 
The objects are pairs $(S,\phi)$ where $S$ is a semigroup and $\phi:\cE\to\cE_S$ is an isomorphism of biordered
sets. The morphisms from $(S,\phi)$ to $(T,\psi)$ are semigroup homomorphisms $\theta:S\to T$ such that $\phi\theta=\psi$
(which implies that the restriction of $\theta$ to $E(S)$ is a biordered set isomorphism $\cE_S\to\cE_T$). It transpires
that this category has an initial object $(\ig{\cE},\iota_E)$ (where $\iota_E$ is the identity mapping on $E$). The semigroup
$\ig{\cE}$ is called the \emph{free idempotent generated semigroup} over $\cE$. For reasons that will be explained shortly, 
we introduce a new set $\ol{E}=\{\ol{e}:\ e\in E\}$ in bijective correspondence with $E$, serving as the 
generating set of $\ig{\cE}$, and define $\ig{\cE}$ by the presentation
$$
\ig{\cE} = \langle\, \ol{E}:\ \ol{e}\,\ol{f} = \ol{e\ast f}\text{ whenever }\{e,f\}\text{ is a basic pair}\, \rangle .
$$
We introduce this slight notational twist because, in what follows, it will be of great importance for us to distinguish
between \emph{words} over $E$, elements of the free semigroup $E^+$ (which we write without bars), and \emph{elements}
of the semigroup $\ig{\cE}$. These latter elements clearly arise from words over $E$, in fact, $\ig{\cE}$ is a quotient
of $\ol{E}^+$ (and thus a homomorphic image of $E^+$), but with the bar-notation $\ol{w}$, $w\in E^+$, we want to put
emphasis on the fact that we are working with a particular element of $\ig{\cE}$ induced by the word $w$ and that we
are considering its position and properties within this latter semigroup. In particular, we denote by $\ol{w}
\in\ig{\cE}$ the image of $w\in E^+$ under the (natural) homomorphism $E^+\to\ig{\cE}$ and thus $\ol{u}\,\ol{v}=\ol{uv}$ 
holds for any $u,v\in E^+$.

Clearly, a crucial step in understanding the structure of idempotent generated semigroups -- and in particular any subclass
with a fixed biordered set of idempotents $\cE$ -- is to study the structure and properties of these free objects $\ig{\cE}$.
Recently, there has been a significant surge of interest towards this goal. A dominant aspect of the topic so far concerned
the maximal subgroups of semigroups of the form $\ig{\cE}$. It was long conjectured that the maximal subgroups of free idempotent
generated semigroups must be free groups, and this conjecture was eventually recorded in \cite{McE}. It was
refuted by Brittenham, Margolis, and Meakin \cite{BMM}, and in fact Gray and Ru\v skuc proved in \cite{GR1} that quite the
opposite is true: every group arises as a maximal subgroup of a suitable free idempotent generated semigroup. This latter
result was subsequently reproved in \cite{DR} and \cite{GY} by using biordered sets of very specific semigroups such as idempotent
semigroups (bands) and endomorphism monoids of free $G$-acts, respectively. Also, one thread of investigation was devoted to
computing the maximal subgroups of $\ig{\cE}$ for biorders of some of the most natural examples of idempotent generated semigroups
\cite{Do2,DG,GR2,YDG}, thus supplying further counterexamples to the freeness conjecture.

The focus changed substantially with the recent paper \cite{DGR}, which initiated investigation of the word problem for
free idempotent generated semigroups. The results from \cite{DGR} relevant to our goals will be reviewed in more detail
in the next section, but we briefly mention that it was shown that (a) there is an algorithm which, given a word $w\in E^+$,
decides whether $\ol{w}$ is a regular element of $\ig{\cE}$; (b) if the word problems of all the maximal subgroups of
$\ig{\cE}$ are solvable, then there is an algorithm  which, for given $u,v\in E^+$ such that $\ol{u},\ol{v}$ are regular
in $\ig{\cE}$, decides whether $\ol{u}=\ol{v}$; (c) there exist a finite (idempotent) semigroup $S$ such that the word
problem of $\ig{\cE_S}$ is undecidable, even though all of its maximal subgroups have decidable word problems. (This
was achieved by encoding the undecidability of the subgroup membership problem for direct squares of free groups, a classical
result due to Mihailova \cite{Mi}.)

The present paper follows the footsteps of \cite{DGR}, but setting a much more general goal. Namely, in a certain 
sense, we aim to `reveal the true nature' of the word problem of a semigroup of the form $\ig{\cE}$ where $\cE$ is a
finite biordered set by translating and recasting it as an algorithmic problem in group theory. We already mentioned that
the word problem of $\ig{\cE}$ restricted to its regular part boils down to word problems of its maximal subgroups (since
the results of \cite{DGR} also show that the converse of (b) above holds as well). We are going to show that the general
word problem for $\ig{\cE}$ is equivalent to a family of decision problems (in the form of existential primitive positive
formulae -- essentially, constraint satisfaction problems) involving rational subsets of direct products of pairs of
maximal subgroups of $\ig{\cE}$ that are effectively computable from $\cE$. This will be achieved in a number of steps.
\begin{itemize}
  \item First, in the next, preliminary, section we gather the necessary basic tools of semigroup theory and language 
	theory, as well as the key results from the literature needed for our considerations. 
  \item In Sect.\ 3 we undertake an analysis of the so-called \emph{r-factorisations} of a word $w\in E^+$, namely 
	factorisations of $w$ into subwords representing regular elements of $\ig{\cE}$. \textbf{Theorem \ref{D-fing}} proves 
	that in any minimal (coarsest) r-factorisation of $w$, the sequence of $\D$-classes to which the regular factors belong 
	(called the \emph{$\D$-fingerprint}) is an invariant of the word $w$. This result is key to many later arguments. 
	In particular, any two words $u,v\in E^+$ must have the same $\D$-fingerprint to stand any chance of satisfying 
	$\ol{u}=\ol{v}$ in $\ig{\cE}$.
	\item Then, in Sect.\ 4, following a crucial idea from \cite[Section 4]{DGR}, with the aim of deciding $\ol{u}=\ol{v}$ 
	we rewrite (again, in an effective, algorithmic way) each of the regular factors in a minimal r-factorisation of a word 
	as a `Rees matrix triple' $(i,g,\lambda)$ where $g$ is an element of the maximal subgroup corresponding to the $\D$-class 
	$D$ of the factor in question, and $i,\lambda$ are indices labelling the $\R$- and the $\L$-classes of $D$, respectively. 
	The rewriting process in question is described in \textbf{Theorem \ref{thm:Rees}}.
	\item This sets up the introduction of the notion of \emph{contact automata} of two $\D$-classes in Sect.\ 5, which 
	eventually brings about the role of rational subsets of direct products of maximal subgroups of $\ig{\cE}$ mentioned previously. 
	We are then able to prove our main result, \textbf{Theorem \ref{prob-p}}, establishing the equivalence between the word problem 
	of $\ig{\cE}$ and a specific family of constraint satisfaction problems (CSPs) over (potentially infinite) finitely presented groups with rational predicates. 
\end{itemize}

Two principal applications of our general results are presented in Sect.\ 6 and 7, respectively. 
\begin{itemize}
  \item In Sect.\ 6, we prove \textbf{Theorem \ref{thm:appl}}, which states that if the maximal subgroups of $\ig{\cE}$ 
	arising from non-maximal $\D$-classes are finite (those arising from maximal ones must necessarily be free or trivial), 
	then the word problem of $\ig{\cE}$ is decidable. In particular, by the results of \cite{Do2,GR2}, this applies to 
	free idempotent generated semigroups over biorders of $\mathcal{T}_n$ and $\mathcal{PT}_n$, the monoids of all transformations 
	and all partial transformations on an $n$-element set, respectively (see \textbf{Corollary \ref{TnPTn}}). 
  \item On the other hand, in the final Sect.\ 7 we prove \textbf{Theorem \ref{thm:Fountain}}, showing that the statement 
	of one of the main results of \cite{YG}, Theorem 4.13, holds for $\ig{\cE}$ where $\cE$ is an arbitrary finite biordered set: 
	namely, in such a case, $\ig{\cE}$ is always \emph{weakly abundant}, along with satisfying the so-called congruence condition. 
	Weakly abundant semigroups \cite{ElQ,Lawson} (also coined \emph{Fountain semigroups} \cite{Ma,MS}) represent a natural generalisation 
	of regular semigroups, with a rich and interesting structural theory that is the subject of ongoing research. 
\end{itemize}

For the remainder of the article, we assume that $\cE$ is an arbitrary but fixed finite biordered set. We emphasise that
this is \emph{not} equivalent to `biordered set of a finite semigroup', since there are finite biordered sets (of infinite
semigroups) which do not arise from any finite semigroup, see Easdown \cite{Ea1}.


\section{Preliminaries}
\label{sec:prelim}

Throughout, we assume basic knowledge of semigroup theory and language theory. Still, in an attempt to keep the paper
reasonably self-contained, we gather here the most important prerequisites for our work.

\subsection*{The word problem}

We say that a semigroup $S$ is \emph{presented} by $\pre{A}{R}$, where $A$ is a set of letters and $R$ is a set of pairs
$(u_i,v_i)$, $i\in I$, of words over $A$ (usually written as formal equalities $u_i=v_i$), if $S$ is isomorphic to
$A^+/\rho_R$, the quotient of the free semigroup $A^+$ on $A$ by the congruence $\rho_R$ generated by $R$. There is
a natural homomorphism $\phi:A^+\to S$ whose kernel relation is precisely $\rho_R$. In particular, $A\phi$ is a generating
set of $S$, and hence the elements of $A$ are called the \emph{generators} and the pairs from $R$ the \emph{relations} of
the presentation. A presentation is \emph{finite} if both $A,R$ are finite sets. Note that the notion of a semigroup presentation
is fully analogous with that of a group presentation, with an appropriate free group taking the role of the free semigroup
on $A$, and bearing in mind that the notion of a group congruence is basically equivalent to that of a normal subgroup.

The immediate question that arises is: given two words $u,v\in A^+$, do they represent the same element of $S$, that is,
whether $u\phi=v\phi$ holds? This is the \emph{word problem} for $S=\pre{A}{R}$. More formally, we have the following
decision problem.

\bigskip

\noindent INPUT: $u,v\in A^+$.\\
OUTPUT: Decide if $u\phi=v\phi$, i.e.\ if $(u,v)\in\rho_R$.

\bigskip

Note that, in our specific case, if $\ig{\cE}$ would have been given by a presentation analogous to the above one without
bars, we would have $w\phi=\ol{w}$ for all $w\in E^+$. (However, we wanted, on one hand, to have the generators already
belonging to $\ig{\cE}$, and on the other to have a strong distinction between words over the generators and elements
of $\ig{\cE}$ represented by these words; hence the notational glitch already described in the introduction.) Therefore, the
word problem for $\ig{\cE}$ asks: given two words $u,v\in E^+$ decide if $\ol{u}=\ol{v}$ holds in $\ig{\cE}$. It is the main
goal of this paper to show how this problem boils down to a decision problem involving the maximal subgroups of $\ig{\cE}$
and certain rational subsets of their direct products, where both the presentation of these groups and the rational subsets
in question are computable from the biordered set $\cE$.

\subsection*{Basics of semigroup theory}

We use \cite{HoBook} as our basic reference in semigroup theory for all further notions and foundational results.

One of the most fundamental ideas in the course of studying the general structure of semigroups in to classify their elements
according to the (right, left, two-sided) principal ideals they generate. This is formalised via five equivalence relations
on a semigroup $S$ called \emph{Green's relations}. For $a,b\in S$ we define:
$$
a\;\R\; b \Leftrightarrow aS^1=bS^1, \quad a\;\L\; b \Leftrightarrow S^1a=S^1b,\quad a\;\J\; b \Leftrightarrow
S^1aS^1=S^1bS^1,
$$
where $S^1$ denotes $S$ with an identity element adjoined (unless $S$ already has one). We let $\H=\R\cap\L$ and let $\D=\R\vee\L$
be the least equivalence on $S$ containing both $\R$ and $\L$. It is well known (see e.g.\ \cite{HoBook}) that $\R\circ\L=\L\circ\R$ holds in
any semigroup, so $\D$ coincides with the latter relational composition. Also, $\R$ is always a left congruence, while $\L$
is a right congruence. In general we have $\H\subseteq \R,\L\subseteq\D\subseteq\J$. There is a large class of semigroups
(including all finite semigroups) for which $\D=\J$. For $\mathscr{K}\in\{\H,\R,\L,\D,\J\}$ we denote the $\mathscr{K}$-class
containing $a\in S$ by $K_a$ (where $K\in\{H,R,L,D,J\}$, respectively).

\begin{rmk}\label{rem:RLD}
Assume that the biordered set $\cE$ arises from the semigroup $S$. Then for any $e,f\in E=E(S)$ we have $e\;\R\;f$ in $\cE$ if and
only if $e\;\R\;f$ in $S$, and $e\;\L\;f$ in $\cE$ if and only if $e\;\L\;f$ in $S$. Indeed, if $e=fs$ and $f=et$ for some
$s,t\in S^1$ then clearly $fe=e$ and $ef=f$ (the converse implication is trivial), and a similar argument holds for the $\L$
relation. Hence, there is no harm at all in blurring the distinction between the relations $\R,\L$ in a birodered set and
their namesakes in any semigroup from which the biorder arises. By \cite[Lemma 2.5]{DGR} and the results of 
\cite{FG} (see Lemma \ref{lem:FG} below), if $S$ is idempotent
generated then the same remark holds for the relation $\D$ in $\cE$ and Green's relation $\D$ in $S$: two idempotents are
$\D$-related in $\cE$ if and only if they are $\D$-related in $S$. (We stress, however, that $\D=\R\circ\L=\L\circ\R$ does not
hold in general for biordered sets.)
\end{rmk}

The $\J$-classes of a semigroup can be partially ordered in a natural way upon defining $J_a\leq J_b$ if and only if $S^1aS^1
\subseteq S^1bS^1$. In a similar way one can define partial orders on collections of $\R$-classes and $\L$-classes of a semigroup.

\begin{rmk}
Since the relations $\D$ and $\J$ in general need not to coincide, it is possible that a certain $\J$-class can contain more
than one $\D$-class. Therefore, the $\J$-order $\leq$ defines a quasi-order on the set of $\D$-classes of a semigroup.
\end{rmk}

An element $a\in S$ is \emph{regular} if there exists $b\in S$ such that $aba=a$. We say that $a'\in S$ is an \emph{inverse}
of $a$ if $a=aa'a$ and $a'=a'aa'$. Clearly, any element having an inverse is necessarily regular, but the converse is true
as well: if $a=aba$ then $a'=bab$ is an inverse of $a$. It is well known that any $\D$-class of a semigroup consists either
entirely of regular or non-regular elements \cite[Proposition 2.3.1]{HoBook}, and in this sense we speak of regular and
non-regular $\D$-classes, respectively. In fact, regular $\D$-classes coincide with $\D$-classes containing idempotents.
Furthermore, for each idempotent $e$ its $\H$-class $H_e$ is a group with identity $e$; this is a maximal subgroup of $S$,
and all maximal subgroups of $S$ arise in this way. Any two maximal subgroups belonging to a single (regular) $\D$-class
must be isomorphic, so that the maximal subgroup is an invariant of a regular $\D$-class up to isomorphism.

We will repeatedly use the following result of Fitz-Gerald \cite{FG}.

\begin{lem}\label{lem:FG}
Let $S$ be an idempotent generated semigroup and let $a\in S$ be a regular element. Then there exist idempotents
$e_1,\dots,e_n\in D_a$ such that $a=e_1\dots e_n$.
\end{lem}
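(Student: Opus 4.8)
The plan is to begin with \emph{any} expression $a=e_1\cdots e_n$ as a product of idempotents $e_i\in E$ --- which exists precisely because $S=\langle E\rangle$ --- and to ``fold'' it around a fixed inverse of $a$ so that every factor becomes an idempotent of $D_a$ without changing the product. Since $a$ is regular it has a genuine inverse $a'$, with $aa'a=a$ and $a'aa'=a'$, as noted above. For $1\le i\le n$ put $s_i=e_1\cdots e_i$ and $t_i=e_i\cdots e_n$, and define
$$f_i \;=\; t_i\, a'\, s_i\;=\;e_ie_{i+1}\cdots e_n\,a'\,e_1e_2\cdots e_i .$$
Two trivial identities do all the work: $s_it_i=a$ (because $e_i^2=e_i$), and, for $i<n$, $s_it_{i+1}=a$.

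I would first dispose of the routine facts. Each $f_i$ is idempotent, since $f_i^2=t_i a'(s_it_i)a's_i=t_ia'aa's_i=t_ia's_i$ using $a'aa'=a'$. And $f_1f_2\cdots f_n=a$: the cleanest route is a short induction showing $f_jf_{j+1}\cdots f_k=t_j\,a'\,s_k$ for $j\le k$, each step collapsing the junction $s_kt_{k+1}$ to $a$ and then applying $a'aa'=a'$; taking $j=1$, $k=n$ gives $f_1\cdots f_n=t_1a's_n=aa'a=a$.

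The crux --- and the point where a naive attempt stalls --- is to show that each $f_i$ lies in $D_a$, not merely in the possibly larger class $J_a$. (Membership in $J_a$ is immediate: $f_i=t_ia's_i\in S^1a'S^1=S^1aS^1$ since $a\mathrel{\J}a'$, while $a=s_if_it_i\in S^1f_iS^1$; but $\D$ and $\J$ need not agree here.) The device is the companion element $v_i:=f_it_i=t_ia'a$. One checks that $f_i\mathrel{\R}v_i$: clearly $v_i\in f_iS^1$, and conversely $f_i=v_i(a's_i)$, which collapses because $a'(aa')=a'aa'=a'$; and that $v_i\mathrel{\L}a$: on one side $v_i=(t_ia')a\in S^1a$, and on the other $a=s_iv_i$ since $s_iv_i=(s_it_i)a'a=aa'a=a$. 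Hence $(f_i,a)\in\R\circ\L=\D$, so $f_i\in E\cap D_a$, and $a=f_1\cdots f_n$ is the required factorisation (symmetrically, one may route through $\L\circ\R$ via $u_i:=s_if_i=aa's_i$). Thus the whole content of the proof is the choice of the symmetrised idempotents $f_i=t_ia's_i$ and their companions $v_i$; everything else reduces to one-line manipulations of $aa'a=a$ and $a'aa'=a'$.
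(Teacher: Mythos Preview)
Your proof is correct and is precisely the classical Fitz-Gerald construction. The paper does not supply its own proof of this lemma---it simply cites Fitz-Gerald's original article---but the very procedure you describe (forming $f_i=t_ia's_i$ from the suffix--prefix decomposition around a chosen inverse $a'$) is explicitly recalled later in the paper, in the proof of Proposition~\ref{action}, where it is attributed to \cite[Lemma 2]{FG} and \cite[page 236, Exercise 12]{HoBook}. So your approach and the paper's (implicit) approach coincide.
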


In other words, in idempotent generated semigroups, each element of a regular $\D$-class $D$ is a product of idempotents from $D$.

A particularly important class of semigroups are the ones that do not have any proper two-sided ideals. A semigroup $S$ is
called \emph{simple} if its only ideal is $S$ itself, and a semigroup with zero $0\in S$ is called \emph{0-simple} if $\{0\}$
and $S$ are its only ideals and $S^2\neq\{0\}$ (i.e.\ the multiplication in $S$ is not null). A 0-simple semigroup is
\emph{completely 0-simple} if it has a \emph{primitive idempotent} -- an idempotent which is minimal (with respect to the
natural order $\le$) among the non-zero ones.

A foundational result called the \emph{Rees Theorem} provides a particularly nice representation of completely
0-simple semigroups. It states that $S$ is completely 0-simple if and only if it is isomorphic to a \emph{Rees matrix semigroup}
$\mathcal{M}^0[G;I,\Lambda;P]$, where $G$ is a group, $I,\Lambda$ are index sets and $P$ is a regular $\Lambda\times I$ matrix
over $G\cup\{0\}$, which means that each row and column contains a non-zero entry. Elements of such a Rees matrix semigroup are
$0$ and triples of the form $(i,g,\lambda)$ where $g\in G$, $i\in I$, $\lambda\in\Lambda$; the multiplication is defined by
$00=0(i,g,\lambda)=(i,g,\lambda)0=0$ and
$$
(i,g,\lambda)(j,h,\mu) = \left\{\begin{array}{ll}
(i,gp_{\lambda j}h,\mu) & \text{if }p_{\lambda j}\neq 0,\\
0 & \text{otherwise.}
\end{array}\right.
$$

One canonical way in which completely 0-simple semigroups might arise are \emph{principal factors} of a semigroup. Let $J$ be
a $\J$-class. The associated principal factor $\ol{J}=J\cup\{0\}$ is defined by the multiplication
$$
x\cdot y = \left\{\begin{array}{ll}
xy & \text{if }x,y,xy\in J,\\
0 & \text{otherwise.}
\end{array}\right.
$$
Any principal factor is either 0-simple or has a zero multiplication; if $J$ contains an idempotent the former case takes place.
Now, a regular $\D$-class of a semigroup may or may not coincide with its $\J$-class. However, in this paper we are exclusively
dealing with semigroups with finitely many idempotents (that is, finite biordered sets), in which case the 
situation simplifies significantly.

\begin{lem}\label{lem:DJ}
Let $S$ be a semigroup with finitely many idempotents, and let $D$ be a regular $\D$-class of $S$. Then $D$ coincides with its
$\J$-class, and the corresponding principal factor is completely 0-simple.
\end{lem}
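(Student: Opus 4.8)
The plan is to prove Lemma \ref{lem:DJ} in two stages: first, that $D$ coincides with its $\J$-class $J_D$, and second, that the resulting principal factor is completely $0$-simple. For the first stage, recall that in any semigroup $\D \subseteq \J$, so it suffices to show $J_D \subseteq D$. Since $D$ is a regular $\D$-class, it contains an idempotent $e$. Suppose $a \in J_D$; then $S^1 a S^1 = S^1 e S^1$, so there exist $x,y,u,v \in S^1$ with $e = xay$ and $a = ueu$ — more precisely $a = ue v$ for some $u,v \in S^1$. The standard argument (see \cite[Section 2.1]{HoBook}) is that in a semigroup with finitely many idempotents — or more classically, a semigroup in which $\mathscr{J}^*$-chains stabilise — one shows $a \mathrel{\R} e' $ and $a \mathrel{\L} e''$ for suitable idempotents, forcing $a \in D_e = D$. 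The cleanest route: a semigroup with only finitely many idempotents is \emph{weakly stable}, and in fact one can invoke the classical fact that finiteness of $E(S)$ implies $S$ is \emph{stable}, meaning $a \mathrel{\J} ab \Rightarrow a \mathrel{\R} ab$ and $a \mathrel{\J} ba \Rightarrow a \mathrel{\L} ba$ for all $a,b$. Stability immediately yields $\D = \J$ when restricted to any regular $\J$-class, hence $D = J_D$. I would either cite stability directly or prove the needed instance of it by a short combinatorial argument: iterating $a \mapsto ea$-type reductions produces an infinite descending chain of distinct idempotents unless the process terminates, contradicting finiteness of $E(S)$.

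For the second stage, having established $D = J_D =: J$, consider the principal factor $\ol{J} = J \cup \{0\}$. Since $J$ contains an idempotent, $\ol{J}$ is $0$-simple rather than null (this is stated in the excerpt just before the lemma). It remains to show $\ol{J}$ has a primitive idempotent, i.e.\ that it is \emph{completely} $0$-simple. Here is where finiteness of $E(S)$ is used again: the idempotents of $\ol{J}$ are precisely the idempotents of $S$ lying in $J$, of which there are only finitely many. Among these finitely many nonzero idempotents, pick one, say $f$, that is minimal with respect to the natural partial order $\le$ restricted to idempotents of $J$ — such a minimal element exists precisely because the set is finite. I then need to check that $f$ is primitive \emph{in $\ol{J}$}, meaning minimal among nonzero idempotents of $\ol{J}$ with respect to its natural order; but the natural order on idempotents of $\ol{J}$ agrees with that of $S$ on $J$ (idempotents $e,f \in J$ satisfy $e \le f$ in $\ol{J}$ iff $ef = fe = e$ with the product taken in $\ol{J}$, and since $e,f,ef,fe$ all lie in $J$ when $e \le f$ in $S$, the products coincide). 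Hence $f$ is a primitive idempotent of $\ol{J}$, so $\ol{J}$ is completely $0$-simple, and by the Rees Theorem it is isomorphic to some $\mathcal{M}^0[G;I,\Lambda;P]$.

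The main obstacle, and the step I would dwell on, is the first stage — verifying $\D = \J$ on the regular class. The subtlety is that finiteness of $E(S)$ does \emph{not} make $S$ finite (the excerpt explicitly warns of this via Easdown's examples), so one cannot simply quote "$\D = \J$ in finite semigroups." One genuinely needs the stability argument. The key lemma to establish is: if $S$ has finitely many idempotents and $a \mathrel{\J} a^2 $ (or more generally $a$ is regular), then $a \mathrel{\D} e$ for an idempotent $e$. The cleanest proof I know goes through the observation that for a regular element $a$ with inverse $a'$, we have $a \mathrel{\R} aa'$ and $a \mathrel{\L} a'a$ automatically, so $a \mathrel{\D} aa'$ where $aa'$ is an idempotent; the real content is showing that anything $\J$-equivalent to $a$ is already $\D$-equivalent to it. For this I would argue by contradiction: if $b \mathrel{\J} a$ but $b \not\mathrel{\D} a$, one uses the regularity of $a$'s $\D$-class together with Green's lemma-style translations to build an infinite strictly descending chain of idempotents, contradicting $|E(S)| < \infty$. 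I expect this to be the one place requiring actual care; everything after "$D$ coincides with its $\J$-class" is essentially a direct application of the Rees Theorem together with the finite-minimality argument for primitivity.
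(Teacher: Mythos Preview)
Your two-stage plan is sound, but you have the stages in the harder order. The paper reverses them: it first proves that the principal factor $\ol{J}$ (where $J$ is the $\J$-class containing $D$) is completely $0$-simple, and only \emph{then} deduces $D=J$. This sidesteps entirely the stability argument you identify as ``the one place requiring actual care.''

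Concretely, the paper's argument runs: $\ol{J}$ is $0$-simple because $J$ contains an idempotent; $\ol{J}$ has only finitely many nonzero idempotents (they all lie in $E(S)$), so a minimal one exists and is primitive; hence $\ol{J}$ is completely $0$-simple. Now cite \cite[Lemma 3.2.7]{HoBook}: in a completely $0$-simple semigroup all nonzero elements are $\D$-related, so $J$ is a single $\D$-class of $\ol{J}$, and since $\D$-relatedness in $\ol{J}$ implies $\D$-relatedness in $S$, we get $J=D$. Done.

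Your route via stability is not wrong --- the claim ``$|E(S)|<\infty$ implies $S$ is stable'' is true --- but it is not quite the off-the-shelf classical fact you present it as, and your sketch (``iterating $a\mapsto ea$-type reductions produces an infinite descending chain of idempotents'') would, when made precise, essentially reconstruct the argument above inside $\ol{J}$: no primitive idempotent forces an infinite strictly descending chain of idempotents, contradicting finiteness. So your hard step, once unpacked, collapses into the paper's easy step. The moral is that passing to the principal factor first localises the problem to a $0$-simple semigroup, where the finiteness of idempotents does all the work via primitivity, and the $\D=\J$ conclusion then comes for free from Rees structure rather than from a bespoke stability argument.
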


\begin{proof}
Let $J$ be the $\J$-class containing $D$; then $\ol{J}$ is a 0-simple semigroup. Furthermore, $\ol{J}$ has finitely many (non-zero)
idempotents, so it contains a primitive one, implying that $\ol{J}$ is completely 0-simple. So, by \cite[Lemma 3.2.7]{HoBook} all
elements of $J$ are $\D$-related in $\ol{J}$ and thus in $S$, showing that $D=J$.
\end{proof}

Hence, since in all our considerations $\ig{\cE}$ has finitely many idempotents, the quasi-order induced on $\D$-classes by the
$\J$-order turns into a partial order when restricted to the collection of \emph{regular} $\D$-classes (and coincides with
the $\J$-order on these classes). For these reasons, if $D_1,D_2$ are two regular $\D$-classes of $\ig{\cE}$ it will make
sense to write $D_1\leq D_2$ with respect to the $\J$-order $\leq$.

\subsection*{Properties of $\ig{\cE}$}

It was proved in \cite{Ea2} that the biordered set of idempotents of $\ig{\cE}$ consists precisely of
$\{\ol{e}:\ e\in E\}$ and is thus isomorphic to $\cE$. Furthermore, it was noted in \cite{GR1} that if
$\cE$ arises from an idempotent generated semigroup $S$ then there is a (surjective) homomorphism $\varphi:
\ig{\cE}\to S$ which, as a consequence of \cite{FG}, maps the $\R$-class ($\L$-class, respectively) of $\ol{e}$
in $\ig{\cE}$ onto the $\R$-class ($\L$-class, resp.) of $e$ in $S$. Therefore, given $D=D_{\ol{e}}$, a
(regular) $\D$-class in $\ig{\cE}$, there is a bijection between the set of $\R$-classes (resp.\ $\L$-classes)
contained in $D$ and the corresponding set in the $\D$-class of $e$ in $S$. Combined with the facts discussed 
in Remark \ref{rem:RLD}, this means that, once we are given a (finite) biordered set $\cE$, we already know some
substantial information about the regular $\D$-classes of $\ig{\cE}$. Namely, if $D_1,\dots,D_m$ is the list
of all $\D$-classes of $\cE$, then $\ig{\cE}$ will also have precisely $m$ regular $\D$-classes, say $D'_1,\dots,
D'_m$, and the `shapes' of corresponding $\D$-classes will be the same: for $1\leq k\leq m$, if $I_k$ is an index
set enumerating $D_k/\R$ and $\Lambda_k$ enumerates $D_k/\L$, then the number of $\R$-($\L$-)classes in $D'_k$
will be $|I_k|$ (resp.\ $|\Lambda_k|$). For this reason, there is no harm in slightly abusing notation and assuming
that $I_k$ and $\Lambda_k$ also enumerate the $\R$-classes (resp.\ $\L$-classes) of $D'_k$. In addition,
by Lemma \ref{lem:DJ} we know that each $D'_k$ will coincide with its $\J$-class and so the $\J$-order
imposes a partial order on the regular $\D$-classes of $\ig{\cE}$.

One of the main results of \cite{DGR} is that regular elements of $\ig{\cE}$ can be effectively
recognised, and furthermore, that the word problem of $\ig{\cE}$ is decidable in its regular part provided
the maximal subgroups of $\ig{\cE}$ all have decidable word problems (in the sense that if $u,v\in E^+$,
there is an algorithm which decides if $\ol{u},\ol{v}$ are regular in $\ig{\cE}$, and if they are, decides
if $\ol{u}=\ol{v}$). Here we need just the decidability of regularity in $\ig{\cE}$, and
we summarise the relevant statements (Lemma 3.3(ii), Theorems 3.6 and 3.7 of \cite{DGR}) in the following.
Here, and in the remainder of the paper, for an alphabet $A$, $A^\ast$ denotes the free monoid on $A$, which is
just $A^+$ augmented with the empty word.

\begin{thm}\label{thm:DGR}
\begin{itemize}
\item[(i)] There exists an algorithm which, given $e\in E$ and $v\in E^\ast$, decides whether
$\ol{ev}\;\R\;\ol{e}$ holds in $\ig{\cE}$ and if so, returns an $f\in E$ such that $\ol{ev}\;\L\;\ol{f}$.
Dually, there exists an algorithm which, given $e\in E$ and $u\in E^\ast$, decides whether
$\ol{ue}\;\L\;\ol{e}$ holds in $\ig{\cE}$ and if so, returns a $g\in E$ such that $\ol{ue}\;\R\;\ol{g}$.
\item [(ii)] For any $w\in E^+$, $\ol{w}$ is a regular element of $\ig{\cE}$ if and only if $w$ can be
factorised as
$$
w = uev
$$
such that $e\in E$ and $\ol{ue}\;\L\;\ol{e}\;\R\;\ol{ev}$. In such a case, $\ol{e}\;\D\;\ol{w}$.
\item[(iii)] There exists an algorithm which, given $w\in E^+$, decides whether $\ol{w}$ is a regular
element of $\ig{\cE}$, and if so, returns $e,f\in E$ such that $\ol{e}\;\R\;\ol{w}\;\L\;\ol{f}$.
\end{itemize}
\end{thm}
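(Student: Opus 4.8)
Items (i)--(iii) are, respectively, Lemma~3.3(ii), Theorem~3.6 and Theorem~3.7 of \cite{DGR}, so strictly speaking the proof is a reference to that paper; but since the underlying ideas are used throughout what follows, it is worth indicating the plan by which one would re-derive them.

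The engine behind~(i) is the fact that right multiplication of $\ol e$ by a generator $\ol f$ (with $f\in E$) never raises the $\R$-class, and that whether it actually \emph{preserves} the $\R$-class of $\ol e$ --- and, if so, into which $\L$-class it sends the product --- is determined by $\cE$ alone. The first step is to isolate this local rule: given an idempotent $g$ with $\ol g\;\R\;\ol e$ inside the $\D$-class $D$ of $\ol e$ and given $f\in E$, decide from $\cE$ whether $\ol{gf}\;\R\;\ol g$ holds and, if so, produce an idempotent $g'\in D$ with $\ol{gf}\;\L\;\ol{g'}$. This is the genuinely substantial point, and it rests on the structural analysis of $\ig{\cE}$ carried out in \cite{DGR} (building on the work of Gray and Ru\v skuc) together with Lemma~\ref{lem:FG}. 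Granting the local rule, the algorithm for~(i) scans $v=f_1\cdots f_n$ from left to right, maintaining after step $k$ an idempotent $g_k$ with $\ol{ef_1\cdots f_k}\;\R\;\ol e$ and $\ol{ef_1\cdots f_k}\;\L\;\ol{g_k}$; it halts reporting failure the first time the rule says the $\R$-class has been lost, and otherwise returns $f:=g_n$. Termination is immediate ($\cE$ finite) and correctness is the local rule plus induction on $k$; the second, dual, statement is the left--right mirror image.

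For~(ii), the forward implication is short: if $w=uev$ with $\ol{ue}\;\L\;\ol e\;\R\;\ol{ev}$, then $\ol{ue}\in L_{\ol e}$, $\ol{ev}\in R_{\ol e}$, and (using $\ol e\,\ol e=\ol e$) $\ol w=\ol{ue}\cdot\ol{ev}$; by Lemma~\ref{lem:DJ} the principal factor of $D:=D_{\ol e}$ is completely $0$-simple, and since $R_{\ol e}\cap L_{\ol e}=H_{\ol e}$ is a group the relevant sandwich entry is non-zero, so the product stays in $D$. Hence $\ol w$ is regular, $\ol w\;\D\;\ol e$, and moreover $\ol w\;\R\;\ol{ue}$, $\ol w\;\L\;\ol{ev}$. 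For the converse, assume $\ol w$ is regular and write $w=g_1\cdots g_m$: as one reads prefixes the $\R$-class of $\ol{g_1\cdots g_k}$ can only descend in the $\J$-order, and since the full prefix lies in the regular $\D$-class $D_{\ol w}$, from some index on every prefix is $\R$-related to $\ol w$; symmetrically for suffixes and $\L$. The technical heart of \cite[Theorem~3.6]{DGR}, which also invokes Lemma~\ref{lem:FG}, is to locate a single letter $g_j=e$ of $w$ at which the prefix $\ol{g_1\cdots g_j}$ has already reached the $\L$-class of $\ol e$ and the suffix $\ol{g_j\cdots g_m}$ the $\R$-class of $\ol e$, which yields the factorisation $w=uev$ with $\ol{ue}\;\L\;\ol e\;\R\;\ol{ev}$.

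Finally, (iii) is an algorithmic packaging of~(i) and~(ii): enumerate the $O(|w|\cdot|E|)$ factorisations $w=uev$ with $e\in E$, and for each use the algorithm of~(i) to test $\ol{ev}\;\R\;\ol e$ and its dual to test $\ol{ue}\;\L\;\ol e$. By~(ii) there is a passing factorisation if and only if $\ol w$ is regular, so if none passes one reports non-regularity. If one passes, (i) returns $f\in E$ with $\ol{ev}\;\L\;\ol f$ and its dual returns $g\in E$ with $\ol{ue}\;\R\;\ol g$; since (by the forward direction of~(ii)) $\ol w\;\R\;\ol{ue}$ and $\ol w\;\L\;\ol{ev}$, one has $\ol g\;\R\;\ol w\;\L\;\ol f$ and outputs the pair $g,f$. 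The only real obstacle in the whole argument is the structural input from \cite{DGR} --- the local rule behind~(i) and the search in the converse of~(ii); everything else is routine bookkeeping inside completely $0$-simple semigroups.
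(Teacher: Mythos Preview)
Your identification is exactly right: in the paper this theorem is not proved but simply stated as a summary of Lemma~3.3(ii), Theorems~3.6 and~3.7 of \cite{DGR}, so the ``proof'' is indeed a bare reference. The sketch you add of the underlying mechanisms (the left-to-right scan for~(i), the seed-finding argument for~(ii), and the finite search for~(iii)) is accurate and in the spirit of \cite{DGR}; one cosmetic point is that in~(iii) the number of factorisations $w=uev$ to check is $|w|$ rather than $O(|w|\cdot|E|)$, since the letter $e$ at each position is fixed by $w$ itself.
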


\begin{rmk}\label{the-rem}
\begin{itemize}
\item[(i)]
Note that if $\ol{e}\;\R\;\ol{ev}$ holds in $\ig{\cE}$ then for any prefix $v'$ of $v$ we have $\ol{e}\;\R\;\ol{ev'}$
and thus, since $\R$ is a left congruence, $\ol{ze}\;\R\;\ol{zev'}$ for any word $z\in E^\ast$. Analogously,
$\ol{ue}\;\L\;\ol{e}$ implies $\ol{u'ez}\;\L\;\ol{ez}$ for any suffix $u'$ of $u$ and any word $z\in E^\ast$.
In particular, for a factorisation $w=uev$ as in part (ii) of the previous theorem we have 
$\ol{ue}\;\R\;\ol{w}\;\L\;\ol{ev}$.
\item[(ii)]
The following converse of a part of the statement (ii) from the previous theorem also holds: if $w\in E^+$
has a factorisation of the form $w=uev$ such that $\ol{e}\;\D\;\ol{w}$ then $\ol{w}$ represents
a regular element of $\ig{\cE}$ and we have $\ol{ue}\;\L\;\ol{e}\;\R\;\ol{ev}$. This follows
from Lemma \ref{lem:DJ}, the inequalities $J_{\ol{e}}=J_{\ol{uev}}\leq J_{\ol{ue}},J_{\ol{ev}}\leq J_{\ol{e}}$
(implying that $\ol{e},\ol{ue},\ol{ev},\ol{w}$ all belong to the same $\D$-class with a completely 0-simple
principal factor), and the basic properties of completely 0-simple semigroups (see \cite{HoBook}).
\end{itemize}
We will freely use these facts in the next section without further reference.
\end{rmk}

\subsection*{Rational subsets of monoids}

Let $M$ be a monoid. The set $\Rat(M)$ of \emph{rational subsets} of $M$ is the smallest subset of the power
set $\mathcal{P}(M)$ which contains all the finite subsets of $M$ and is closed for the operations of taking
\begin{itemize}
\item[(1)] the union of two sets;
\item[(2)] the product of two sets, $A\cdot B=\{ab:\ a\in A, b\in B\}$;
\item[(3)] the \emph{Kleene star} $A^\ast$ of a set $A\subseteq M$, which is just the submonoid of $M$
generated by $A$.
\end{itemize}

A classical result in language theory (Kleene's Theorem) \cite{HUBook} states that $L\subseteq\Sigma^\ast$ is rational if and only
if it is recognised by a finite state automaton. It is easy to see that if $M$ happens to be generated by a finite set $\Sigma$ and $\psi:
\Sigma^\ast\to M$ is the canonical monoid homomorphism then $A$ is a rational subset of $M$ if and only if
$A=L\psi$ for some rational language $L\subseteq\Sigma^\ast$. If the latter is indeed the case then we have
two options of effectively specifying a rational subset of $M$: we can define it either via a finite automaton
over $\Sigma$, or by means of a rational expression over $\Sigma$. The two approaches are equivalent by virtue
of Kleene's Theorem because its proof provides effective algorithms which convert a finite automaton into
the rational expression representing the language that the automaton accepts, and vice versa. Let us conclude
by noting that if $M$ is a group, then the previous remarks hold provided $\Sigma$ is a \emph{monoid} generating
set of $M$. If, however, $\Gamma$ generates $M$ as a group, then the symmetric generating set $\Gamma^{\pm 1}=
\Gamma\cup\Gamma^{-1}$ takes the role of $\Sigma$.

\section{Minimal r-factorisations and the $\D$-fingerprint}
\label{sec:min-r}

Let $w\in E^+$. A factorisation
$$
w = w_1\dots w_m
$$
is called an \emph{r-factorisation} of $w$ (with respect to $\cE$)
if $\ol{w_i}$ is a regular element of $\ig{\cE}$ for all $1\leq i\leq m$.
Note that there is a natural partial order on the set of all factorisations of $w$: if $w=p_1\dots p_m=q_1\dots q_s$
we say that $(p_1,\dots,p_m)$ is \emph{coarser} than $(q_1,\dots,q_s)$ and write
$$(p_1,\dots,p_m)\preceq (q_1,\dots,q_s)$$
if $m\leq s$ and there exist $1=s_1<s_2<\dots<s_m\leq s$ with $$p_i=q_{s_i}\dots q_{s_{i+1}-1}$$
for all $1\leq i<m$ and $p_m=q_{s_m}\dots q_s$. This order has a maximum element -- the factorisation of $w$ into
its letters (which is at the same time an r-factorisation, so at least one r-factorisation exists for any word $w$)
-- and a minimum element, the trivial factorisation. This order can be also restricted to r-factorisations
only, thus giving rise to the notion of a \emph{minimal r-factorisation} of $w$ (somewhat akin to the \emph{almost
normal forms} of \cite{YG}). Note that the set of (r-)factorisations
of a word is finite and non-empty, so any word has at least one minimal r-factorisation; it is easy to see that an
r-factorisation $w = w_1\dots w_m$ is minimal if and only if for any $1\leq i\leq j\leq m$ the assumption that
$\ol{w_i\dots w_j}$ is regular implies that $i=j$: no product of at least two consecutive factors represents
a regular element of $\ig{\cE}$.

Certainly, a minimal r-factorisation of a word need not to be unique. Nevertheless, we are going to prove not only that all
r-factorisations of a single word $w$ `look alike', but that when we are concerned with two words $u,v\in E^+$
such that $\ol{u}=\ol{v}$ their arbitrary minimal r-factorisations behave like a sort of `pseudo-normal form':
they have the same number of factors, and the two (regular) elements of $\ig{\cE}$ represented by any pair of
corresponding factors belong to the same $\D$-class. This will naturally give rise to the notion of a
\emph{$\D$-fingerprint} of an element of $\ig{\cE}$, a sequence of regular $\D$-classes of $\ig{\cE}$ uniquely
determined by any minimal r-factorisation of an arbitrary word $w\in E^+$ representing the considered
element. In fact, it will be shown that the entire semigroup $\ig{\cE}$ can be identified with a quotient of the set
of all finite sequences of regular elements of $\ig{\cE}$ in which no product of two or more consecutive
elements is regular; thus the word problem of $\ig{\cE}$ will boil down to the issue of the computability of
a certain equivalence on that set (where sharing the same length and $\D$-fingerprint is a necessary condition
for equivalence). Ultimately, we will show that this computability issue reduces to the question of satisfying
a certain existential formula involving rational subsets of direct products of pairs of maximal subgroups of
$\ig{\cE}$, a purely group-theoretical algorithmic problem.

To this end, we define a relation $\approx$ on the set of all (non-empty) finite sequences of words from $E^+$
by setting
$$
(p_1,\dots,p_m) \approx (q_1,\dots,q_s)
$$
if and only if $m=s$ and one of the three following conditions hold:
\begin{enumerate}
\item[(i)] $\ol{p_i}=\ol{q_i}$ for some $1\leq i\leq m$ and $p_j=q_j$ for all $j\neq i$;
\item[(ii)] $\ol{p_i}=\ol{q_ie}$ and $\ol{q_{i+1}}=\ol{ep_{i+1}}$ for some $1\leq i<m$ and $e\in E$,
and $p_j=q_j$ for all $j\not\in\{i,i+1\}$;
\item[(iii)] $\ol{q_i}=\ol{p_ie}$ and $\ol{p_{i+1}}=\ol{eq_{i+1}}$ for some $1\leq i<m$ and $e\in E$,
and $p_j=q_j$ for all $j\not\in\{i,i+1\}$.
\end{enumerate}
This relation is clearly reflexive and symmetric. We define the relation $\sim$ to be the transitive closure
of $\approx$. Note that if $(p_1,\dots,p_m)$ and $(q_1,\dots,q_m)$ are such that $\ol{p_i}=
\ol{q_i}$ for all $1\leq i\leq m$ then $(p_1,\dots,p_m)\sim (q_1,\dots,q_m)$. Also, the following is immediate
from the above definition.

\begin{lem}\label{lem0}
If $(p_1,\dots,p_m)\sim (q_1,\dots,q_m)$ then $\ol{p_1\dots p_m}=\ol{q_1\dots q_m}$.
\end{lem}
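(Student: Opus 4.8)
The plan is to prove Lemma~\ref{lem0} by first reducing to the case of a single application of $\approx$ and then checking each of the three defining clauses separately. Since $\sim$ is the transitive closure of $\approx$, if $(p_1,\dots,p_m)\sim(q_1,\dots,q_m)$ there is a finite chain
$$
(p_1,\dots,p_m) = (r^{(0)}_1,\dots,r^{(0)}_m) \approx (r^{(1)}_1,\dots,r^{(1)}_m) \approx \cdots \approx (r^{(k)}_1,\dots,r^{(k)}_m) = (q_1,\dots,q_m),
$$
and since $\ig{\cE}$ is a semigroup, equality of the products $\ol{r^{(t)}_1\cdots r^{(t)}_m}$ is transitive along the chain. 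So it suffices to show: if $(p_1,\dots,p_m)\approx(q_1,\dots,q_m)$, then $\ol{p_1\cdots p_m}=\ol{q_1\cdots q_m}$.

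Next I would dispose of the three clauses. In clause~(i) we have $\ol{p_i}=\ol{q_i}$ and $p_j=q_j$ (hence $\ol{p_j}=\ol{q_j}$) for $j\neq i$; multiplying these equalities together in $\ig{\cE}$ and using $\ol{u}\,\ol{v}=\ol{uv}$ gives $\ol{p_1\cdots p_m}=\ol{p_1}\cdots\ol{p_m}=\ol{q_1}\cdots\ol{q_m}=\ol{q_1\cdots q_m}$. In clause~(ii) we have $\ol{p_i}=\ol{q_i e}$ and $\ol{p_{i+1}}=\ol{e q_{i+1}}$ for some $e\in E$, with $p_j=q_j$ off $\{i,i+1\}$. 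Then
$$
\ol{p_i}\,\ol{p_{i+1}} = \ol{q_i e}\cdot\ol{e q_{i+1}} = \ol{q_i}\,\ol{e}\,\ol{e}\,\ol{q_{i+1}} = \ol{q_i}\,\ol{e}\,\ol{q_{i+1}} = \ol{q_i e q_{i+1}} = \ol{q_i}\,\ol{q_{i+1}},
$$
where the middle step uses that $\ol{e}$ is idempotent in $\ig{\cE}$ (as $e\in E$ and $\{e,e\}$ is trivially a basic pair, so $\ol{e}\,\ol{e}=\ol{e\ast e}=\ol{e}$). Multiplying on the left by $\ol{p_1\cdots p_{i-1}}=\ol{q_1\cdots q_{i-1}}$ and on the right by $\ol{p_{i+2}\cdots p_m}=\ol{q_{i+2}\cdots q_m}$ yields the claim. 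Clause~(iii) is identical to~(ii) with the roles of the $p$'s and $q$'s interchanged.

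There is no real obstacle here: the statement is, as the paper says, ``immediate from the definition''. The only points requiring a word of care are the reduction to a single $\approx$-step via transitivity of equality in $\ig{\cE}$, and the observation that $\ol{e}^2=\ol{e}$ in $\ig{\cE}$, which is what makes the telescoping in clauses~(ii) and~(iii) work. Everything else is a routine manipulation using the single identity $\ol{u}\,\ol{v}=\ol{uv}$ for $u,v\in E^+$.
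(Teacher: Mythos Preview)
Your overall plan is exactly right and matches what the paper has in mind (the paper gives no proof, calling the lemma ``immediate''). However, you have misread clause~(ii): the definition states $\ol{p_i}=\ol{q_ie}$ and $\ol{q_{i+1}}=\ol{ep_{i+1}}$, not $\ol{p_{i+1}}=\ol{eq_{i+1}}$. With your (incorrect) reading, your displayed computation breaks at the final step $\ol{q_i e q_{i+1}} = \ol{q_i}\,\ol{q_{i+1}}$, which does not follow from anything you have.

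With the correct reading, the argument is actually simpler and needs no appeal to idempotency of $\ol{e}$:
\[
\ol{p_i}\,\ol{p_{i+1}} \;=\; \ol{q_ie}\,\ol{p_{i+1}} \;=\; \ol{q_i}\,\ol{ep_{i+1}} \;=\; \ol{q_i}\,\ol{q_{i+1}},
\]
using only $\ol{u}\,\ol{v}=\ol{uv}$ and the two hypotheses of clause~(ii). Clause~(iii) is then genuinely symmetric. The rest of your write-up (reduction to a single $\approx$-step, clause~(i)) is fine.
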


Let $w\in E^+$ be a word representing a regular element $\ol{w}$ of $\ig{\cE}$. By Theorem \ref{thm:DGR}(ii),
there is a factorisation $w=uev$ satisfying $\ol{ue}\;\L\;\ol{e}\;\R\;\ol{ev}$. In such a case, the letter
(idempotent) $e$ will be called a \emph{seed} of $w$. Clearly, seeds need not be unique in words representing
regular elements. (In fact, by Lemma \ref{lem:FG} there exist idempotents $\ol{e_1},\dots,\ol{e_n}
\in D_{\ol{w}}=D_{\ol{e}}$ such that $\ol{e_1\dots e_n}=\ol{w}$. Therefore,
it can happen that every single letter of a word is a seed.)

We write $w'\equiv w$ if the word $w'$ is obtained from $w$ by application of \emph{one} rewriting rule
induced by the defining relations of the presentation of $\ig{\cE}$ -- that is to say, either a two-letter
subword $ef$ is replaced by the letter $e\ast f$, provided $\{e,f\}$ is a basic pair, or a single letter $g$ is
replaced by a two-letter word $ef$ such that $\{e,f\}$ is a basic pair and $e\ast f=g$. Note that $\equiv$
is a symmetric relation.

\begin{lem}\label{lem1}
Let $w\in E^+$ and $w=p_1\dots p_m$ be a minimal r-factorisation of $w$. Furthermore, let $w'$ be a word such
that $w'\equiv w$. Then $w'$ has a minimal r-factorisation $w'=p_1'\dots p_m'$ such that
$(p_1,\dots,p_m)\approx (p_1',\dots,p_m')$. Furthermore, we have either $\ol{p_i}\;\R\;\ol{p_i'}$ or
$\ol{p_i}\;\L\;\ol{p_i'}$ (and thus $\ol{p_i}\;\D\;\ol{p_i'}$) for all $1\leq i\leq m$.
In particular, we have $\ol{p_1}\;\R\;\ol{p_1'}$ and $\ol{p_m}\;\L\;\ol{p_m'}$.
\end{lem}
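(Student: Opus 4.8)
The plan is to track the single application of a defining relation connecting $w$ and $w'$ relative to the given minimal r-factorisation $w = p_1\cdots p_m$. Since $\equiv$ is symmetric, there are three cases: (a) $w'$ arises from $w$ by expanding a letter $g$ into a two-letter word $ef$ with $e\ast f = g$ and $\{e,f\}$ a basic pair; (b) $w'$ arises from $w$ by contracting a two-letter subword $ef$ lying inside a single factor $p_i$; (c) $w'$ arises from $w$ by contracting a two-letter subword $ef$ straddling the boundary between consecutive factors $p_i$ and $p_{i+1}$, so that $e$ is the last letter of $p_i$ and $f$ the first letter of $p_{i+1}$.

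Cases (a) and (b) are routine: in both I would keep $p_j' = p_j$ for $j\neq i$ and let $p_i'$ be $p_i$ with the rewriting applied, so that $\ol{p_i'} = \ol{p_i}$. Then $\ol{p_i'}$ is regular, $(p_1,\dots,p_m)\approx(p_1',\dots,p_m')$ via clause (i), and $\ol{p_i'}\;\H\;\ol{p_i}$ (hence both $\R$- and $\L$-related); minimality of $w' = p_1'\cdots p_m'$ is immediate since $\ol{p_a'\cdots p_b'} = \ol{p_a\cdots p_b}$ for every $a\le b$.

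The heart of the matter is case (c). I would write $p_i = p_i''e$ and $p_{i+1} = fp_{i+1}''$ (with $p_i'', p_{i+1}''\in E^\ast$ possibly empty, the displayed letters present) and set $g_0 = e\ast f$. A short computation with the defining relations of $\ig{\cE}$ shows that every basic pair $\{e,f\}$ satisfies at least one of: (I) $\ol{g_0}\;\R\;\ol{e}$ (which holds whenever $ef = e$ or $fe = e$), and (II) $\ol{g_0}\;\L\;\ol{f}$ (whenever $ef = f$ or $fe = f$). In case (I) I would put $p_i' = p_i''g_0$, $p_{i+1}' = p_{i+1}''$ and $p_j' = p_j$ otherwise; in case (II), symmetrically, $p_i' = p_i''$ and $p_{i+1}' = g_0p_{i+1}''$. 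Four things then need to be checked: that the new factors are nonempty words; that they represent regular elements; that $(p_1,\dots,p_m)\approx(p_1',\dots,p_m')$, via clause (iii) with idempotent $f$ in case (I), or clause (ii) with idempotent $e$ in case (II); and that $w' = p_1'\cdots p_m'$ is again minimal. The relations $\ol{p_i}\;\R\;\ol{p_i'}$ and $\ol{p_{i+1}}\;\L\;\ol{p_{i+1}'}$ come from $\ol{g_0}\;\R\;\ol{e}$ (resp.\ $\ol{g_0}\;\L\;\ol{f}$) together with the fact that $\R$ is a left and $\L$ a right congruence on $\ig{\cE}$; these same congruence properties, plus the identity $\ol{p_i'p_{i+1}'} = \ol{p_ip_{i+1}}$, reduce minimality of $w'$ to that of $w$ (any regular $\ol{p_a'\cdots p_b'}$ turns out to be $\R$- or $\L$-related to, or equal to, $\ol{p_a\cdots p_b}$, forcing $a=b$). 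Finally, the ``in particular'' assertion follows because the first of the two changed factors is always $\R$-related to its replacement and the last is always $\L$-related, while unchanged factors are literally equal.

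I expect the main obstacle to be the ``leftover'' factor in case (c) -- namely $p_{i+1}''$ in case (I) and $p_i''$ in case (II) -- for which one must show nonemptiness and regularity. This is exactly where minimality of the \emph{original} r-factorisation is used. Applying Theorem \ref{thm:DGR}(ii) to $\ol{p_{i+1}}$ (resp.\ $\ol{p_i}$) produces a seed; if that seed were the boundary letter $f$ (resp.\ $e$), or if the leftover factor were empty, one checks -- again via the one-sided congruence properties of $\R$ and $\L$ and the relation $\ol{g_0}\;\R\;\ol{e}$ (resp.\ $\ol{g_0}\;\L\;\ol{f}$) -- that $\ol{p_ip_{i+1}}$ would be regular, contradicting minimality. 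Hence the seed lies strictly inside the leftover factor, and Theorem \ref{thm:DGR}(ii) in the converse direction then yields both its regularity and, via $\L$ being a right (resp.\ $\R$ a left) congruence, the remaining $\L$- (resp.\ $\R$-) relation.
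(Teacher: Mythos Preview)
Your proof is correct and follows essentially the same strategy as the paper's: dispose of rewrites internal to a factor via clause~(i) of~$\approx$, and in the boundary case show that the stripped neighbour must retain a seed (else $\ol{p_ip_{i+1}}$ would be regular, violating minimality). The only difference is organisational: the paper splits the boundary case according to whether $e\ast f\in\{e,f\}$ (with a further split on $f\ast e$ when $e\ast f\notin\{e,f\}$), whereas you group directly by whether $\ol{e\ast f}\,\R\,\ol e$ or $\ol{e\ast f}\,\L\,\ol f$, which is a cleaner dichotomy covering exactly the same ground; you also spell out the minimality check for the new factorisation, which the paper leaves implicit.
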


\begin{proof}
If $w'$ is obtained from $w$ such that a two-letter subword $ef$ of a factor $p_i$ (for some $1\leq i\leq m$)
is replaced by $e\ast f$, or, the other way round, that a letter $g$ occurring in $p_i$ is replaced by the
word $ef$ with the properties as indicated above -- thus turning $p_i$ into $p_i'\equiv p_i$ -- then we clearly
have $\ol{p_i'}=\ol{p_i}$ and so condition (i) takes place in the definition of $\approx$ (as all other factors 
of $w$ remain unchanged), which immediately yields the lemma.

Hence, it remains to discuss the case when a subword $ef$ of $w$ is replaced by $e\ast f$ in such a way that
$e$ is the last letter of $p_i$ and $f$ is the first letter of $p_{i+1}$ for some $1\leq i<m$. Throughout the
remainder of the proof we write $p_i=qe$ and $p_{i+1}=fp'$. There are three subcases to consider: $e\ast f=e$, 
$e\ast f=f$, and $e\ast f\not\in\{e,f\}$. We look only at the first and the third one, as the second one is 
analogous to the first.

So, let $e\ast f=e$. We argue that $\ol{p'}\;\L\;\ol{p_{i+1}}$; indeed, this will follow as soon as we show 
that $f$ cannot be a seed for $p_{i+1}$ (because then it follows quickly from Theorem \ref{thm:DGR}(ii)
that $\ol{p'}\;\L\;\ol{p_{i+1}}$ is regular and a seed for $p_{i+1}$ is also a seed for $p'$). 
Assuming to the contrary, it follows that $\ol{f}\;\R\;\ol{p_{i+1}}=\ol{fp'}$.
Upon multiplying by $\ol{qe}$ from the left, the fact that $\R$ is a left congruence
implies that $\ol{qef}=\ol{q}\,\ol{ef}=\ol{q}\,\ol{e\ast f}=\ol{qe}=\ol{p_i}$ is $\R$-related to
$\ol{p_ip_{i+1}}$, forcing the latter to be regular, which is a contradiction to the minimality assumption.
Therefore, if we let $p_i'=p_if=qef$ and $p_{i+1}'=p'$ and leave all the other factors unchanged, then
$\ol{p_i}=\ol{p_i'}$ and we have condition (iii) in the definition of $\approx$, so the lemma holds in this case.

Finally, let $e\ast f=g\not\in\{e,f\}$; this happens because $f\ast e\in\{e,f\}$. Assume that
$f\ast e=f$ (the other case is analogous). Then $\ol{g}\;\L\;\ol{f}$, as $\ol{fg}=\ol{fef}=\ol{f}$, implying
(since $\L$ is a right congruence) $\ol{gp'}\;\L\;\ol{fp'}=\ol{p_{i+1}}$. Similarly to the previous paragraph, 
we must have $\ol{q}\;\R\;\ol{p_i}$ unless $e$ is
a seed for $p_i$. To show the latter is impossible, assume $\ol{e}\;\L\;\ol{p_i}=\ol{qe}$. By multiplying by
$\ol{p_{i+1}}=\ol{fp'}$ from the right, we get
$$
\ol{p_{i+1}}\;\L\;\ol{gp'}=\ol{efp'}\;\L\;\ol{p_ip_{i+1}},
$$
so that $\ol{p_ip_{i+1}}$ is regular, contradicting, again, the minimality of the initial factorisation.
Hence, with $p_i'=q$ and $p_{i+1}'=gp'$ (and other factors unchanged) we have $\ol{p_i}\;\R\;\ol{p_i'}$
and $\ol{p_{i+1}}\;\L\;\ol{p_{i+1}'}$ and condition (ii) from the definition of $\approx$. This completes
the proof of the lemma, as $\ol{p_1}\;\R\;\ol{p_1'}$ and $\ol{p_m}\;\L\;\ol{p_m'}$ follow immediately from 
previous considerations.
\end{proof}

Next we want to show that any two minimal r-factorisations of a single word are `similar'. For this, we introduce
the concept of the position of a letter within a word. Namely, if $w=uev$ defines an occurrence of the letter $e$
within $w$ we say that the \emph{position} of this occurrence is the integer $|u|+1=|ue|$. Similarly, if we have
a factorisation $w=w_1\dots w_m$ then we can `coordinatise' it by recording the
sequence $(\alpha_1,\dots,\alpha_m)$ of positions of first letters (from the left) of the factors $w_1,\dots,w_m$.
Here necessarily $\alpha_1=1$ and $1<\alpha_2<\dots<\alpha_m\leq |w|$.

\begin{lem}\label{lem2}
Let
$$
w = p_1p_2\dots p_m = q_1q_2\dots q_s
$$
be two minimal r-factorisations of a word $w\in E^+$ with coordinates $(1,\alpha_2,\dots,\alpha_m)$ and
$(1,\beta_2,\dots,\beta_s)$, respectively. Assume that for each $p_i$, $1\leq i\leq m$, and $q_j$, $1\leq j\leq s$,
we have fixed one seed -- say, $e_i$ at position $\gamma_i$ and $f_j$ at position $\delta_j$, respectively --
so that we have
$$
1\leq\gamma_1<\alpha_2\leq\gamma_2<\dots<\alpha_m\leq\gamma_m\leq|w|
$$
and
$$
1\leq\delta_1<\beta_2\leq\delta_2<\dots<\beta_s\leq\delta_s\leq|w| .
$$
Then $m=s$, $\ol{p_i}\;\D\;\ol{q_i}$ for all $1\leq i\leq m$, and
$$
1\leq\gamma_1,\delta_1<\alpha_2,\beta_2\leq\gamma_2,\delta_2<\dots<\alpha_m,\beta_m\leq\gamma_m,\delta_m\leq|w| .
$$
In addition, $\ol{p_1}\;\R\;\ol{q_1}$ and $\ol{p_m}\;\L\;\ol{q_m}$.
\end{lem}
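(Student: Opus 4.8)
The natural strategy is an inductive comparison of the two minimal r-factorisations, using Lemma \ref{lem1} as the engine that propagates information through a sequence of elementary rewrites. Since $\ol{w}$ is fixed, there is a word $w_0$ (for instance $w$ itself in letter-by-letter form, but more to the point any common refinement target) reachable from both $p_1\dots p_m$ and $q_1\dots q_s$ by a chain of single rewrite steps $\equiv$; in fact it suffices to take a finite sequence $w=x_0\equiv x_1\equiv\dots\equiv x_N$ of words and track a minimal r-factorisation along it. The plan is to start from the factorisation $(p_1,\dots,p_m)$ of $w$ and, applying Lemma \ref{lem1} repeatedly along such a chain, show that every minimal r-factorisation reached in this way has exactly $m$ factors, with corresponding factors $\D$-related to $\ol{p_1},\dots,\ol{p_m}$, the first pair $\R$-related and the last pair $\L$-related. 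Because $(q_1,\dots,q_s)$ is \emph{some} minimal r-factorisation of $w$ and (by the decidability/structure results, in particular Theorem \ref{thm:DGR}) the transformations of Lemma \ref{lem1} let us move between any two minimal r-factorisations of a single word through $\equiv$-moves, this forces $s=m$ and the asserted $\D$-, $\R$-, $\L$-relations.

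More carefully, I would first establish the \emph{connectivity} statement: any two minimal r-factorisations of a fixed word $w$ are connected by a finite sequence of moves, where each move either (a) replaces the current word $w$ by a word $w'\equiv w$ and the factorisation by the one supplied by Lemma \ref{lem1}, or (b) is one of the $\approx$-moves (i)--(iii) keeping the underlying word unchanged. Here the crucial point is that an $\equiv$-step followed by its inverse $\equiv$-step returns us to $w$, and Lemma \ref{lem1} applied in both directions relates the original and final factorisations of $w$ by $\approx$; iterating over a chain $x_0\equiv\dots\equiv x_N$ realising any rewriting between $p_1\dots p_m$ and $q_1\dots q_s$ does the job. Along each such move, Lemma \ref{lem1} guarantees that the number of factors is preserved and that corresponding factors stay in the same $\D$-class (each individual move gives $\R$ or $\L$, hence $\D$, between old and new $i$-th factors; composing finitely many such gives $\D$ overall since $\D$ is transitive). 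Crucially Lemma \ref{lem1} also records $\ol{p_1}\;\R\;\ol{p_1'}$ and $\ol{p_m}\;\L\;\ol{p_m'}$ at every step, and $\R$, $\L$ are transitive, so these survive the whole chain, yielding $\ol{p_1}\;\R\;\ol{q_1}$ and $\ol{p_m}\;\L\;\ol{q_m}$.

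It remains to handle the coordinates and the seeds, i.e.\ to derive the interleaving inequality
$$
1\leq\gamma_1,\delta_1<\alpha_2,\beta_2\leq\gamma_2,\delta_2<\dots<\alpha_m,\beta_m\leq\gamma_m,\delta_m\leq|w| .
$$
The inequalities $\gamma_i,\delta_i<\alpha_{i+1},\beta_{i+1}$ and $\alpha_{i+1},\beta_{i+1}\leq\gamma_{i+1},\delta_{i+1}$ internal to each factorisation are given by hypothesis; what must be shown is that $\alpha_i$ and $\beta_i$ (and $\gamma_i,\delta_i$) \emph{interleave correctly} across the two factorisations, e.g.\ $\gamma_i<\beta_{i+1}$ and $\delta_i<\alpha_{i+1}$. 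This is where the argument is most delicate, and I expect it to be the main obstacle. The idea is: if, say, $\alpha_{i+1}\leq\delta_i$, then the seed $f_i$ (at position $\delta_i$ in $q_i$) would lie inside the factor $p_{i+1}$ of the other factorisation, or straddle the boundary in a way that, combined with $f_i$ being a seed, forces some product $\ol{p_i\dots p_{i+1}}$ (or a longer block) to be regular via the left/right congruence properties of $\R$, $\L$ exactly as in the contradiction arguments inside the proof of Lemma \ref{lem1} — contradicting minimality of $(p_1,\dots,p_m)$. One must phrase this symmetrically and push it through all indices, most cleanly by an induction on $i$ that simultaneously maintains $\alpha_i$ close to $\beta_i$ (so that the seeds of the $i$-th factors of the two factorisations lie in a common overlap region) and uses Theorem \ref{thm:DGR}(ii) and Remark \ref{the-rem} to transfer seedhood between the overlapping factors. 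Once this interleaving is in place, reading the positions in increasing order yields precisely the displayed chain of inequalities, and the proof is complete.
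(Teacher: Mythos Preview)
Your plan has a structural gap at its core. The two factorisations $p_1\dots p_m$ and $q_1\dots q_s$ are factorisations of \emph{the same word} $w$; there is no nontrivial $\equiv$-chain between them, and Lemma \ref{lem1} gives you nothing here. That lemma takes a minimal r-factorisation of $w$ and a single rewrite $w\equiv w'$ and produces \emph{one particular} minimal r-factorisation of $w'$; it does not let you move between two different minimal r-factorisations of $w$ itself. Your ``connectivity'' step---that some out-and-back chain $w=x_0\equiv\dots\equiv x_N=w$ together with Lemma \ref{lem1} will land you on the $q$-factorisation rather than back at the $p$-factorisation---is asserted without justification, and in fact it is precisely what Lemma \ref{lem2} is needed to supply (see how the paper later uses Lemma \ref{lem2} in the proof of Theorem \ref{wp-sim} to bridge the gap between $(p_1^{(\ell)},\dots,p_m^{(\ell)})$ and $(q_1,\dots,q_m)$). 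So the first two paragraphs of your plan are circular.

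What you relegate to the final paragraph---the direct positional argument showing the interleaving of $\alpha_i,\beta_i,\gamma_i,\delta_i$ via contradictions with minimality---is in fact the \emph{entire} content of the proof, and it delivers everything: $m=s$, the $\D$-relations, and the $\R$/$\L$ relations for the end factors. The paper's proof works purely inside the fixed word $w$, never invoking Lemma \ref{lem1}. It first shows $\gamma_1<\beta_2$ (else the seed $e_1$ sits inside some $q_k$ with $k\geq 2$, forcing $\ol{q_1\dots q_k}\;\L\;\ol{q_k}$), then $\beta_2\leq\gamma_2$, and proceeds by induction on $k$ establishing $\gamma_{k-1},\delta_{k-1}<\alpha_k,\beta_k\leq\gamma_k,\delta_k$; the equality $m=s$ and the relations $\ol{p_i}\;\D\;\ol{q_i}$, $\ol{p_1}\;\R\;\ol{q_1}$, $\ol{p_m}\;\L\;\ol{q_m}$ are then read off from these inequalities. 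Your sketch of this part is in the right spirit but too vague: you need to carry out the case analysis (which seed lies in which factor, whether $\alpha_k\lessgtr\beta_k$) explicitly, since the contradictions are not all the same shape.
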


\begin{proof}
First of all, if $m=1$ then $\ol{w}=\ol{p_1}$ is regular, so the minimality of the r-factorisation 
$w=q_1q_2\dots q_s$ implies $s=1$. Similarly, $s=1$ implies $m=1$, and the lemma follows immediately. Hence,
for the rest of the proof we may assume that $m,s\geq 2$. We start by discussing the possible relationships 
between the words $p_1$ and $q_1$ and their seeds, thus establishing the inequalities 
$1\leq\gamma_1,\delta_1<\alpha_2,\beta_2\leq\gamma_2,\delta_2$.

Consider first the possibility that $\beta_2\leq \gamma_1$, i.e.\ that the word $q_1$ ends before reaching the
distinguished seed $e_1$ of $p_1$. Let $k\geq 2$ be the unique index with $\beta_k\leq\gamma_1<\beta_{k+1}$
(throughout the proof, we let $\alpha_{m+1}=\beta_{s+1}=|w|+1$). Now we can write $p_1=u_1e_1v_1$ and
$q_k=q_k'e_1q_k''$, so that $u_1=q_1\dots q_{k-1}q_k'$. By Theorem \ref{thm:DGR}(ii) we have that $\ol{e_1}\;\L\;
\ol{q_k'e_1}\;\L\;\ol{u_1e_1}$. After multiplication by $\ol{q_k''}$ from the right, this becomes
$$\ol{q_1\dots q_k}=\ol{u_1e_1q_k''}\;\L\;\ol{q_k'e_1q_k''}=\ol{q_k},$$
implying that $\ol{q_1\dots q_k}$ is regular; this contradicts the minimality of the second factorisation of $w$.
Therefore, we must have $\gamma_1<\beta_2$, and by switching the roles of the two factorisations we analogously
conclude that $\delta_1<\alpha_2$.

Now suppose that $\gamma_2<\beta_2$. Then
$$
q_1 = p_1\dots p_{k-1}p_k'
$$
for some $k\geq 2$ as large as possible such that $p_k'$ is a prefix of $p_k$. From the previous paragraph,
the distinguished seed $f_1$ of $q_1$ appears in $p_1$, so we have a factorisation $q_1=z_1f_1w_1$ (in the
sense of Theorem \ref{thm:DGR}(ii)), and also $p_1=z_1f_1w_1'$, for some words $z_1,w_1,w_1'$ such that 
$w_1'$ is a prefix of $w_1$. Hence,
$$
\ol{q_1}\;\R\;\ol{z_1f_1}\;\R\;\ol{p_1}\;\R\;\ol{p_1\dots p_{k-1}}.
$$
If $k-1\geq 2$ then we have a contradiction, by minimality of the r-factorisation: thus $k=2$. However, the
assumption $\gamma_2<\beta_2$ implies that $p_2'$ must contain the distinguished seed $e_2$ of $p_2$. We
therefore have factorisations $p_2=u_2e_2v_2$ (in the sense of Theorem \ref{thm:DGR}(ii)) and $p_2'=u_2e_2v_2'$,
for some words $u_2,v_2,v_2'$ such that $v_2'$ is a prefix of $v_2$. Hence
$$
\ol{q_1} = \ol{p_1p_2'} = \ol{p_1u_2e_2v_2'}\;\R\;\ol{p_1u_2e_2}\;\R\;\ol{p_1p_2},
$$
again contradicting minimality of the given r-factorisations. Thus we must conclude $\beta_2\leq\gamma_2$, 
and by switching the roles of $p$'s and $q$'s we get $\alpha_2\leq\delta_2$.

So, we now know that we must have $1\leq\gamma_1,\delta_1<\alpha_2,\beta_2\leq\gamma_2,\delta_2$. Assume, by
induction, that we have already established the inequalities $\gamma_{k-1},\delta_{k-1}<\alpha_k,\beta_k\leq
\gamma_k,\delta_k$ for some $k\geq 2$.

First we want to prove that $\gamma_k<\beta_{k+1}$, so assume to the contrary that $\beta_{k+1}\leq\gamma_k$.
Let $h\geq k+1$ be the unique index with $\beta_h\leq\gamma_k<\beta_{h+1}$, so that the distinguished seed $e_k$ of
$p_k$ is positioned within $q_h$. We write $q_h=ye_kz$. If $\alpha_k<\beta_k$ then $q_k\dots q_{h-1}ye_k$ is
a suffix of $u_ke_k$, where $p_k=u_ke_kv_k$ in the sense of Theorem \ref{thm:DGR}(ii), so that 
$$
\ol{q_k\dots q_{h-1}ye_k} \;\L\; \ol{e_k} \;\L\; \ol{ye_k},
$$
whence, multiplying through on the right by $\ol{z}$, we deduce $\ol{q_k\dots q_h}\;\L\;\ol{q_h}$, contradicting
minimality. Therefore, we may continue working under the assumption that $\alpha_k\geq\beta_k$. So, if we
write $q_k=u_k'f_kv_k'$ in the sense of Theorem \ref{thm:DGR}(ii), and bearing in mind that we have 
$\alpha_k\leq\delta_k$ as granted, we
can factorise $u_k'=xx'$, where $x'$ is the prefix of $p_k$ between positions $\alpha_k$ and $\delta_k-1$ (empty
if $\alpha_k=\delta_k$). We now have
$$\ol{ye_k}\;\L\;\ol{e_k}\;\L\;\ol{x'f_kv_k'q_{k+1}\dots q_{h-1}ye_k}$$
(note that $x'f_kv_k'q_{k+1}\dots q_{h-1}y$ coincides with $u_k$, the prefix preceding $e_k$ in $p_k$),
which by multiplying by $z$ from the right gives
$$\ol{q_h}=\ol{ye_kz}\;\L\;\ol{x'f_kv_k'q_{k+1}\dots q_{h-1}ye_kz}=\ol{x'f_kv_k'q_{k+1}\dots q_{h-1}q_h}.$$
On the other hand, $\ol{u_k'f_k}\;\L\;\ol{f_k}\;\L\;\ol{x'f_k}$ implies
$$\ol{q_k\dots q_h}=\ol{u_k'f_kv_k'q_{k+1}\dots q_{h-1}q_h}\;\L\;\ol{x'f_kv_k'q_{k+1}\dots q_{h-1}q_h}.$$
We conclude that $\ol{q_k\dots q_h}$ is $\L$-related to the regular element $\ol{q_h}$, a contradiction.
So, $\gamma_k<\beta_{k+1}$, and for analogous reasons $\delta_k<\alpha_{k+1}$.

Now we want to show that $\beta_{k+1}\leq\gamma_{k+1}$. To this end, assume that $\gamma_{k+1}<\beta_{k+1}$.
Putting this together with the induction hypothesis, we get $\beta_k\leq \gamma_k<\gamma_{k+1}<\beta_{k+1}$.
This means that $q_k$ contains the positions of both $e_k$ and $e_{k+1}$, the distinguished seeds of
$p_k$ and $p_{k+1}$, respectively. Also, we already know that $\alpha_k\leq\delta_k<\alpha_{k+1}$, placing
$f_k$, the distinguished seed of $q_k$, within $p_k$. Let $l$ be the least number with the property $\alpha_l
\geq \beta_{k+1}$. By the information we already gathered, $l\geq k+2$; we distinguish
between the cases $l=k+2$ and $l\geq k+3$. In either case, write $p_k=u_ke_kv_k$, $p_{k+1}=u_{k+1}e_{k+1}v_{k+1}$
and $q_k=u_k'f_kv_k'$ in the sense of Theorem \ref{thm:DGR}(ii).

Let first $l=k+2$. Let $x$ be the prefix of $v_{k+1}$ ending at position $\beta_{k+1}-1$. If
$\alpha_k\leq\beta_k$ then let $y$ be the suffix of $u_k$ starting at the position $\beta_k$.
Then $\ol{u_ke_k}\;\L\;\ol{ye_k}$, yielding
$$\ol{p_ku_{k+1}e_{k+1}x}=\ol{u_ke_kv_ku_{k+1}e_{k+1}x}\;\L\;\ol{ye_kv_ku_{k+1}e_{k+1}x}=\ol{q_k}.$$
Otherwise, if $\beta_k<\alpha_k$, let $z$ be the suffix of $u_k'$ beginning at position $\alpha_k$. So,
$\ol{u_k'f_k}\;\L\;\ol{zf_k}$ implying
$$\ol{q_k}=\ol{u_k'f_kv_k'}\;\L\; \ol{zf_kv_k'} = \ol{p_ku_{k+1}e_{k+1}x}.$$
Thus in any case $\ol{q_k}\;\L\;\ol{p_ku_{k+1}e_{k+1}x}$. However, we also have $\ol{e_{k+1}x}\;\R\;
\ol{e_{k+1}v_{k+1}}$, which implies $$\ol{p_ku_{k+1}e_{k+1}x}\;\R\;\ol{p_ku_{k+1}e_{k+1}v_{k+1}}=\ol{p_kp_{k+1}}.$$
Hence, we obtain $\ol{q_k}\;\D\;\ol{p_kp_{k+1}}$, implying the latter to be a regular element, a contradiction.

Now turn to the case $l\geq k+3$. Let $x$ now be a prefix of $v_k'$ ending at position $\alpha_{l-1}-1$, so that
$\ol{f_kv_k'}\;\R\;\ol{f_kx}$ and $\ol{q_k}=\ol{u_k'f_kv_k'}\;\R\;\ol{u_k'f_kx}$. If $\beta_k\leq\alpha_k$,
then let $y$ be the suffix of $u_k'$ starting at position $\alpha_k$. Then $\ol{u_k'f_k}\;\L\;\ol{yf_k}$ and
$$\ol{u_k'f_kx}\;\L\;\ol{yf_kx}=\ol{p_k\dots p_{l-2}},$$
so $\ol{q_k}\;\D\;\ol{p_k\dots p_{l-2}}$, prompting the
latter product to be regular, a contradiction. If, however, $\alpha_k<\beta_k$, then let $z$ be the suffix of $u_k$
starting at position $\beta_k$. Accordingly, $\ol{u_ke_k}\;\L\;\ol{ze_k}$ and
$$\ol{p_k\dots p_{l-2}}=
\ol{u_ke_kv_kp_{k+1}\dots p_{l-2}}\;\L\;\ol{ze_kv_kp_{k+1}\dots p_{l-2}} = \ol{u_k'f_kx},$$
yielding the same impossible conclusion, $\ol{q_k}\;\D\;\ol{p_k\dots p_{l-2}}$.

Hence, indeed we must have $\beta_{k+1}\leq\gamma_{k+1}$ and, dually, $\alpha_{k+1}\leq\delta_{k+1}$, thus
completing the inductive proof that the required inequalities $\gamma_{k-1},\delta_{k-1}<\alpha_k,\beta_k\leq
\gamma_k,\delta_k$ hold for $k\leq\min(m,s)$.

With the aim of showing that $m=s$, assume without loss of generality that $m<s$. Then we have $\alpha_m,\beta_m\leq
\gamma_m,\delta_m<\beta_{m+1}\leq\delta_{m+1}<\dots<\beta_s\leq \delta_s$, and so $p_m$ contains $q_{m+1}\dots q_s$
as a suffix; in addition, it contains the distinguished seed $f_m$ of $q_m$. Conversely, the seed $e_m$ of $p_m$
is contained in $q_m$, so we may write $p_m=u_me_mv_m$ (in the sense of Theorem \ref{thm:DGR}(ii)) and $q_m=xe_my$
(note that the latter factorisation does not imply that $e_m$ is a seed for $q_m$). Therefore, $\ol{e_mv_m}\;\R\;\ol{e_my}$
and so $\ol{q_m}=\ol{xe_my}\;\R\;\ol{xe_mv_m}=\ol{q_m\dots q_s}$, since $v_m=yq_{m+1}\dots q_s$. It follows
that $\ol{q_m\dots q_s}$ is a regular element, a contradiction. This proves $m=s$.

Finally, consider the words $p_i$ and $q_i$ and recall that now we already know that $\alpha_i,\beta_i\leq
\gamma_i,\delta_i < \alpha_{i+1},\beta_{i+1}$. Without any loss of generality, assume that $\alpha_i\leq\beta_i$.
Write $p_i=u_ie_iv_i$ and $q_i=u_i'f_iv_i'$ (with respect to their distinguished seeds), and let $x$ be the
suffix of $u_i$ beginning at position $\beta_i$. Then $\ol{u_ie_i}\;\L\;\ol{xe_i}$. If $\alpha_{i+1}\leq\beta_{i+1}$,
multiply the latter relation by $v_i$ from the right to get $\ol{p_i}\;\L\;\ol{xe_iv_i}$. In particular, if $i=m$,
we have $\ol{p_m}\;\L\;\ol{q_m}$ since $q_m=xe_iv_i$ (this is the suffix of $w$ starting at position $\beta_m$). 
On the other hand, let $y$ be a prefix of $v_i'$ ending at position $\alpha_{i+1}-1$; then $\ol{f_iv_i'}\;\R\;\ol{f_iy}$ 
and $\ol{q_i}=\ol{u_i'f_iv_i'}\;\R\;\ol{u_i'f_iy}$. Since $xe_iv_i=u_i'f_iy$ (both sides are equal to the subword
of $w$ between positions $\beta_i$ and $\alpha_{i+1}-1$), we have $\ol{p_i}\;\D\;\ol{q_i}$, as required. For 
$i=1$ we obtain $\ol{p_1}\;\R\;\ol{q_1}$, as $p_1=u_i'f_iy$ is the prefix of $q_1$ ending at position $\alpha_2-1$.
A similar reasoning leads to the same conclusion if $\alpha_{i+1}>\beta_{i+1}$, whence we are done.
\end{proof}

\begin{thm}\label{D-fing}
Let $u,v\in E^+$ be such that $\ol{u}=\ol{v}$. Furthermore, let $u=p_1\dots p_m$ and $v=q_1\dots q_s$ be arbitrary
minimal r-factorisations. Then $m=s$ and for all $1\leq i\leq m$ we have $\ol{p_i}\;\D\;\ol{q_i}$. Furthermore, 
we have $\ol{p_1}\;\R\;\ol{q_1}$ and $\ol{p_m}\;\L\;\ol{q_m}$.
\end{thm}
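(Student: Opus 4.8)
The plan is to deduce the theorem from the two preceding lemmas by exploiting the fact that $\ol{u}=\ol{v}$ holds in $\ig{\cE}$ precisely when the words $u,v\in E^+$ are joined by a finite chain of elementary rewrites: there exist words $u=w_0\equiv w_1\equiv\dots\equiv w_k=v$ (possibly with $k=0$, i.e.\ $u=v$). This is just the statement that $\ig{\cE}$ is the quotient of $\ol{E}^+$ by the congruence generated by its defining relations, and that $\equiv$ implements one application of such a relation in context. Given such a chain, the strategy is to transport the prescribed minimal r-factorisation $(p_1,\dots,p_m)$ of $u$ step by step along the chain, and finally to reconcile the resulting minimal r-factorisation of $v$ with the arbitrary prescribed one $(q_1,\dots,q_s)$ via Lemma \ref{lem2}.

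For the transport, apply Lemma \ref{lem1} successively to the single rewrites $w_j\equiv w_{j+1}$, $0\leq j<k$, starting from the minimal r-factorisation $(p_1^{(0)},\dots,p_m^{(0)})=(p_1,\dots,p_m)$ of $w_0=u$. Each application yields a minimal r-factorisation $(p_1^{(j+1)},\dots,p_m^{(j+1)})$ of $w_{j+1}$ — of the same length $m$, since $\approx$ only relates sequences of equal length — satisfying $\ol{p_i^{(j)}}\;\D\;\ol{p_i^{(j+1)}}$ for all $i$, together with the sharper facts $\ol{p_1^{(j)}}\;\R\;\ol{p_1^{(j+1)}}$ and $\ol{p_m^{(j)}}\;\L\;\ol{p_m^{(j+1)}}$. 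Composing these $k$ steps and using that $\D$ is transitive (and that $\R$, $\L$ are transitive), we obtain a minimal r-factorisation $(p_1^{(k)},\dots,p_m^{(k)})$ of $w_k=v$ with $\ol{p_i}\;\D\;\ol{p_i^{(k)}}$ for all $1\leq i\leq m$, and moreover $\ol{p_1}\;\R\;\ol{p_1^{(k)}}$ and $\ol{p_m}\;\L\;\ol{p_m^{(k)}}$. (When $k=0$ this stage is vacuous and $(p_i^{(k)})=(p_i)$.)

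It remains to compare $(p_1^{(k)},\dots,p_m^{(k)})$ with the given $(q_1,\dots,q_s)$: these are two minimal r-factorisations of the \emph{same} word $v$. Each of their factors is regular, hence by Theorem \ref{thm:DGR}(ii) possesses a seed, so after fixing seeds we may invoke Lemma \ref{lem2}, which gives $m=s$, $\ol{p_i^{(k)}}\;\D\;\ol{q_i}$ for all $i$, $\ol{p_1^{(k)}}\;\R\;\ol{q_1}$ and $\ol{p_m^{(k)}}\;\L\;\ol{q_m}$. Composing with the relations from the previous paragraph — once more using transitivity of $\D$ and of $\R$, $\L$ — we conclude $m=s$, $\ol{p_i}\;\D\;\ol{q_i}$ for all $1\leq i\leq m$, $\ol{p_1}\;\R\;\ol{q_1}$ and $\ol{p_m}\;\L\;\ol{q_m}$, which is exactly the assertion of the theorem.

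I expect essentially no serious obstacle here: the genuine technical content has already been absorbed into Lemma \ref{lem1} and (especially) the long inductive argument of Lemma \ref{lem2}, and what remains is a bookkeeping assembly along a rewriting chain. The only point requiring a little care is that, along a single rewrite step, corresponding factors are a priori only known to be `$\R$-related or $\L$-related' (hence $\D$-related), not uniformly one of the two; but since $\D$ is an equivalence this is harmless, and the special treatment of the first and last factors is legitimate precisely because Lemma \ref{lem1} already singles out the sharper conclusions $\ol{p_1}\;\R\;\ol{p_1'}$ and $\ol{p_m}\;\L\;\ol{p_m'}$, with $\R$ and $\L$ being transitive in their own right.
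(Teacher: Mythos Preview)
Your proof is correct and follows essentially the same approach as the paper's own proof: transport the given minimal r-factorisation of $u$ along a rewriting chain via repeated applications of Lemma~\ref{lem1}, then reconcile the resulting minimal r-factorisation of $v$ with the prescribed one via Lemma~\ref{lem2}, and compose using transitivity of $\D$, $\R$, $\L$. The paper's argument is slightly terser but structurally identical.
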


\begin{proof}
First of all, we have $\ol{u}=\ol{v}$ if and only if there are words $w_1,\dots,w_\ell$ such that $w_1=u$,
$w_i\equiv w_{i+1}$ for all $1\leq i<\ell$, and $w_\ell=v$. By repeated applications of Lemma \ref{lem1}, we
obtain minimal r-factorisations
\begin{equation}
w_j = p_1^{(j)}\dots p_m^{(j)} \label{wj-fact}
\end{equation}
for $1\leq j\leq\ell$ such that $\ol{p_i^{(j)}}\;\D\;\ol{p_i^{(j')}}$ for all $1\leq i\leq m$
and $1\leq j,j'\leq\ell$ (we assume that $p_i^{(1)}=p_i$). In particular, for all such $i$ we have $\ol{p_i}\;\D\;
\ol{p_i^{(\ell)}}$ and
$$
v = p_1^{(\ell)}\dots p_m^{(\ell)} = q_1\dots q_s.
$$
Now Lemma \ref{lem2} implies that $s=m$ and $\ol{p_i^{(\ell)}}\;\D\;\ol{q_i}$ for all $1\leq i\leq m$. Furthermore,
by the same lemma we have $\ol{p_1}\;\R\;\ol{p_i^{(j)}}\;\R\;\ol{q_1}$ and $\ol{p_m}\;\L\;\ol{p_m^{(j)}}\;\L\;\ol{q_m}$
for all $1\leq j\leq\ell$, and the theorem is proved.
\end{proof}

Another way of expressing the previous theorem is that for a fixed element of $\ig{\cE}$, for any word $w\in E^+$ such that
$\ol{w}$ equals the element in question and any minimal r-factorisation $w=p_1\dots p_m$ the sequence
$(D_{\ol{p_i}},\dots,D_{\ol{p_m}})$ of regular $\D$-classes of $\ig{\cE}$ is uniquely determined by the considered element.
We call this sequence the \emph{$\D$-fingerprint} of the element. So, for any two words $u,v\in E^+$ to stand any chance
of representing the same element of $\ig{\cE}$ we must first have that $\ol{u}$ and $\ol{v}$ have the same $\D$-fingerprint.
In particular, Lemma \ref{lem0} implies that if $u=p_1\dots p_m$, $v=q_1\dots q_m$ are minimal r-factorisations and we have
$(p_1,\dots,p_m)\sim (q_1,\dots,q_m)$ then $\ol{p_i}\;\D\;\ol{q_i}$ for all $1\leq i\leq m$. In fact, we have the following
description of the word problem of $\ig{\cE}$ via the $\sim$ relation and minimal r-factorisations.

\begin{thm}\label{wp-sim}
Let $u,v\in E^+$. The following conditions are equivalent:
\begin{itemize}
\item[(1)] $\ol{u}=\ol{v}$;
\item[(2)] there exist minimal r-factorisations of $u$ and $v$, respectively, having the same number of factors, say
$u=p_1\dots p_m$ and $v=q_1\dots q_m$ for some $m\geq 1$, such that 
$$
(p_1,\dots,p_m) \sim (q_1,\dots,q_m);
$$
\item[(3)] there exists an integer $m\geq 1$ such that all minimal r-factorisations of $u$ and $v$, respectively, have
precisely $m$ factors, and whenever $u=p_1\dots p_m$ and $v=q_1\dots q_m$ are such factorisations we have
$$
(p_1,\dots,p_m) \sim (q_1,\dots,q_m).
$$
\end{itemize}
\end{thm}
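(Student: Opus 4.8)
The plan is to prove the cycle of implications $(2)\Rightarrow(1)\Rightarrow(3)\Rightarrow(2)$, of which the first and last are formalities and the middle one carries all the content. For $(2)\Rightarrow(1)$ I would simply quote Lemma~\ref{lem0}: if $(p_1,\dots,p_m)\sim(q_1,\dots,q_m)$ then $\ol u=\ol{p_1\dots p_m}=\ol{q_1\dots q_m}=\ol v$. For $(3)\Rightarrow(2)$ I would only observe that the set of r-factorisations of any word is finite and non-empty, so minimal r-factorisations exist, and then (3) applied to any choice of them is precisely (2).

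The substance is in $(1)\Rightarrow(3)$. First, Theorem~\ref{D-fing} shows that all minimal r-factorisations of $u$ and all minimal r-factorisations of $v$ share a common number $m\ge1$ of factors, so it remains to prove that whenever $u=p_1\dots p_m$ and $v=q_1\dots q_m$ are minimal r-factorisations, then $(p_1,\dots,p_m)\sim(q_1,\dots,q_m)$. I would isolate two claims. \textbf{Claim A:} given a minimal r-factorisation $u=p_1\dots p_m$, there is a minimal r-factorisation $v=p_1'\dots p_m'$ with $(p_1,\dots,p_m)\sim(p_1',\dots,p_m')$; this follows by fixing a rewriting sequence $u=w_1\equiv w_2\equiv\dots\equiv w_\ell=v$ (available because $\ol u=\ol v$), applying Lemma~\ref{lem1} to each step $w_j\equiv w_{j+1}$ to transport the factorisation forward, and concatenating the resulting $\approx$-steps. \textbf{Claim B:} any two minimal r-factorisations of one and the same word are $\sim$-related. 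Granting both, the conclusion follows: $(p_1,\dots,p_m)\sim(p_1',\dots,p_m')\sim(q_1,\dots,q_m)$ by transitivity of $\sim$, and since the original factorisations were arbitrary, this is exactly condition (3).

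Claim~B is the heart of the matter, and I would prove it by a \emph{breakpoint-sliding} argument based on Lemma~\ref{lem2}. Write two minimal r-factorisations of a word $w$ with coordinates $(1,\alpha_2,\dots,\alpha_m)$ and $(1,\beta_2,\dots,\beta_m)$, fix a seed for each factor, and invoke Lemma~\ref{lem2} to obtain the interleaved chain of seed positions. Then induct on $\sum_{i=2}^{m}|\alpha_i-\beta_i|$: if this is $0$ the two factorisations coincide, so suppose it is positive, take the least $i$ with $\alpha_i\ne\beta_i$, and assume without loss of generality $\alpha_i<\beta_i$. I would move the single letter sitting at position $\alpha_i$ from the front of the $i$-th factor to the end of the $(i-1)$-st. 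Using the seed inequalities from Lemma~\ref{lem2} (which place the chosen seed of the $(i-1)$-st factor strictly before position $\alpha_i$ and the chosen seed of the $i$-th factor at or beyond $\beta_i$), together with Remark~\ref{the-rem}, Theorem~\ref{thm:DGR}(ii) and the one-sided congruence properties of $\R$ and $\L$, one checks three things: the two altered factors still represent regular elements; the altered factorisation is still minimal, since every sub-product of consecutive factors of it is $\R$- or $\L$-related to the corresponding old sub-product and hence is non-regular; and the passage is an instance of condition (iii) in the definition of $\approx$, with the moved letter in the role of $e$. One step of this kind decreases $\sum_i|\alpha_i-\beta_i|$ by one, a short separate check shows the altered factors stay non-empty, and the induction closes.

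The step I expect to be the main obstacle is the minimality verification inside Claim~B: one must make sure that sliding a letter across a breakpoint does not accidentally produce a product of two or more consecutive factors that is regular. Everything else — the two easy implications, the identification of $m$ from Theorem~\ref{D-fing}, Claim~A, and the combination of the claims — is routine given Lemmas~\ref{lem0}, \ref{lem1}, \ref{lem2}.
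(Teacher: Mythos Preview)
Your proposal is correct and follows essentially the same route as the paper: the cycle of implications, Claim~A via Lemma~\ref{lem1} along a rewriting sequence, and Claim~B via the breakpoint-sliding argument based on the interleaved inequalities of Lemma~\ref{lem2}. The paper organises the cycle as $(1)\Rightarrow(2)\Rightarrow(3)\Rightarrow(1)$, but this is immaterial.

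One remark on what you flag as ``the main obstacle''. You set up Claim~B as an induction on $\sum_i|\alpha_i-\beta_i|$ over pairs of \emph{minimal} r-factorisations, which forces you to verify that each intermediate factorisation is again minimal. That verification does go through (your choice of the least $i$ gives $\alpha_{i-1}=\beta_{i-1}$, so the seed $f_{i-1}$ of the $\beta$-factorisation lies in $p_{i-1}$ and witnesses $\ol{p_{i-1}}\;\R\;\ol{p_{i-1}e}$, while the seed $e_i$ at position $\gamma_i\ge\beta_i>\alpha_i$ witnesses $\ol{p_i'}\;\L\;\ol{p_i}$; the $\R$/$\L$-links then transfer non-regularity of old sub-products to new ones). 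However, this work is unnecessary. The relation $\approx$ is defined on \emph{all} finite sequences of words from $E^+$, and a single letter-slide at boundary $i$ is literally an instance of clause~(iii) (or (ii)) of the definition of $\approx$, with no regularity or minimality hypothesis required. Since Lemma~\ref{lem2} guarantees $\max(\alpha_{i-1},\beta_{i-1})\le\max(\gamma_{i-1},\delta_{i-1})<\min(\alpha_i,\beta_i)$, the sliding intervals are disjoint and no intermediate factor becomes empty; so one may simply perform all $\sum_i|\alpha_i-\beta_i|$ slides in succession and read off a $\sim$-chain. This is how the paper dispatches the point in a single sentence.
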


\begin{proof}
(1)$\Ra$(2) Let $u=p_1\dots p_m$ and $v=q_1\dots q_s$ be arbitrary (but fixed) minimal r-facto\-ri\-sa\-tions of $u$ and $v$, 
respectively. By Theorem \ref{D-fing}, we have $m=s$. Furthermore, there are words $w_1,\dots,w_\ell$ such that $w_1=u$, 
$w_j\equiv w_{j+1}$ for all $1\leq j<\ell$, and $w_\ell=v$. Just as in the proof of the previous theorem, repeated applications 
of Lemma \ref{lem1} yield minimal r-factorisations \eqref{wj-fact} of the words $w_j$ such that 
$$
(p_1,\dots,p_m) \approx (p_1^{(2)},\dots,p_m^{(2)}) \approx \dots \approx (p_1^{(\ell)},\dots,p_m^{(\ell)}).
$$
Now $p_1^{(\ell)}\dots p_m^{(\ell)}$ and $q_1\dots q_m$ are minimal r-factorisations of the same word, $v$, from which
it is immediate that
$$
(p_1^{(\ell)},\dots,p_m^{(\ell)}) \sim (q_1,\dots,q_m),
$$
namely, if $(1,\alpha_2,\dots,\alpha_m)$ and $(1,\beta_2,\dots,\beta_m)$ coordinatise the latter two factorisations, then
Lemma \ref{lem2} implies that $1<\alpha_2,\beta_2<\dots<\alpha_m,\beta_m$ and the relocation of the letters from the subword
between positions $\min(\alpha_i,\beta_i)$ and $\max(\alpha_i,\beta_i)-1$ from one factor to its neighbour yields the above
relation. Therefore, we have $(p_1,\dots,p_m) \sim (q_1,\dots,q_m)$.

(2)$\Ra$(3) Let $u=p_1\dots p_m$ and $v=q_1\dots q_m$ be the minimal r-factorisations provided by the condition (2), and let
$u=p_1'\dots p_r'$ and $v=q_1'\dots q_s'$ be arbitrary minimal r-factorisations of $u$ and $v$, respectively. Then Theorem
\ref{D-fing} implies that $r=m=s$. Just as in the previous paragraph we conclude that $(p_1',\dots,p_m')\sim(p_1,\dots,p_m)$
and $(q_1,\dots,q_m)\sim(q_1',\dots,q_m')$ (as these pairs of sequences represent minimal r-factorisations of the same word),
whence the assumption $(p_1,\dots,p_m) \sim (q_1,\dots,q_m)$ from (2) yields the required conclusion $(p_1',\dots,p_m') \sim 
(q_1',\dots,q_m')$.

(3)$\Ra$(1) This implication follows immediately from Lemma \ref{lem0}.
\end{proof}

\begin{rmk}\label{wp-eff}
Let us briefly explain why the previous theorem effectively reduces the word problem of $\ig{\cE}$ to the
question of computability of the relation $\sim$ on finite sequences of words over $E$ comprising minimal r-factorisations
of words with respect to $\cE$. The reason for this lies in the certainty of finding at least one minimal r-factorisation
of any word $w$ algorithmically, and, in addition, locating a seed for each of the factors. Indeed, since $|w|$ is finite,
there are only finitely many factorisations of the word $w$. For any such factorisation $w=w_1\dots w_m$ we can determine
whether it is an r-factorisation by testing (using Theorem \ref{thm:DGR}(iii)) whether each $\ol{w_i}$ is regular. In this way
we can generate the list of all r-factorisations of $w$, and since the list is finite we can pick a minimal one.
Hence, given two words $u,v$, there is an algorithm that returns a minimal r-factorisation for each of these
two words and now, by Theorem \ref{wp-sim}, $\ol{u}=\ol{v}$ is equivalent to the $\sim$-relatedness of the
sequences of factors of the computed minimal r-factorisations.

Furthermore,
if $w=w_1\dots w_m$ is a minimal r-factorisation, we can run through all factorisations $w_i=u_ie_iv_i$ using
Theorem \ref{thm:DGR}(i) to determine if they satisfy the conditions of Theorem \ref{thm:DGR}(ii) and thus find a seed $e_i$
for each $w_i$. This shows that given a word $w$, the $\D$-fingerprint of $\ol{w}$ is algorithmically computable,
as this is now simply the sequence $(D_{\ol{e_1}},\dots,D_{\ol{e_m}})$.
\end{rmk}

\section{Rees matrix coordinatisation of regular factors}
\label{sec:Rees}

Paralleling the notion of a minimal r-factorisation of a word in $E^+$ with respect to $\cE$, we introduce the notion
of an \emph{irreducible r-sequence} $(\mathbf{r}_1,\dots,\mathbf{r}_m)$ of regular elements of $\ig{\cE}$: these
are characterised by the property that no nontrivial product of consecutive elements is regular. It is
easy to see that $(\mathbf{r}_1,\dots,\mathbf{r}_m)$ is an irreducible r-sequence if and only if for some
(equivalently, any) $p_1,\dots,p_m\in E^+$ such that $\mathbf{r}_i=\ol{p_i}$ for all $1\leq i\leq m$ we have
that $(p_1,\dots,p_m)$ is a minimal r-factorisation of some word $w$ such that
$\ol{w}=\mathbf{r}_1\dots\mathbf{r}_m$.

Now, we can pass from the `syntactic' relation $\sim$ defined on (certain) sequences of words to the relation
$\simeq$ defined on the set of irreducible r-sequences, defined by
$$
(\mathbf{r}_1,\dots,\mathbf{r}_m) \simeq (\mathbf{s}_1,\dots,\mathbf{s}_m)
$$
if and only if $(p_1,\dots,p_m) \sim (q_1,\dots,q_m)$ for some words $p_i,q_i\in E^+$ such that
$\mathbf{r}_i=\ol{p_i}$ and $\mathbf{s}_i=\ol{q_i}$ for all $1\leq i\leq m$. This is well-defined since 
it does not depend on the choice of the words $p_i,q_i$, because if $p_i',q_i'\in E^+$ are
words such that $\ol{p_i'}=\ol{p_i}$ and $\ol{q_j'}=\ol{q_j}$ for all $1\leq i\leq m$ then clearly
$(p_1,\dots,p_m)\sim (p_1',\dots,p_m')$ and $(q_1,\dots,q_m)\sim (q_1',\dots,q_m')$, so we have that
$(p_1,\dots,p_m) \sim (q_1,\dots,q_m)$ if and only if $(p_1',\dots,p_m') \sim (q_1',\dots,q_m')$.
Therefore, we can conclude that the word problem of $\ig{\cE}$ relies on the computability of the relation
$\simeq$ on the set of irreducible r-sequences, since we have (as a direct corollary of Theorem \ref{wp-sim})
$\mathbf{r}_1\dots\mathbf{r}_m=\mathbf{s}_1\dots\mathbf{s}_m$ if and only if $(\mathbf{r}_1,\dots,\mathbf{r}_m)
\simeq (\mathbf{s}_1,\dots,\mathbf{s}_m)$. Also recall that, by our earlier remarks,
$(\mathbf{r}_1,\dots,\mathbf{r}_m) \simeq (\mathbf{s}_1,\dots,\mathbf{s}_m)$ implies
$\mathbf{r}_i\;\D\;\mathbf{s}_i$ for all $1\leq i\leq m$ (and so the $\D$-fingerprints of both products
$\mathbf{r}_1\dots\mathbf{r}_m$ and $\mathbf{s}_1\dots\mathbf{s}_m$ are the same:
$(D_{\mathbf{r}_1},\dots,D_{\mathbf{r}_m})$).

To connect the relation $\simeq$ (and thus the word problem of $\ig{\cE}$) to the maximal subgroups of $\ig{\cE}$,
we must find a different representation of a regular element of $\ig{\cE}$ (other than by a word over $E$).
This is the core of the idea contained in \cite[Section 4]{DGR}, and we are going to review here the corresponding
transformation of the problem, taking special care that each step along the way is effectively algorithmic.
Namely, we already know from Lemma \ref{lem:DJ} that in our setting of a finite $E$, for any regular $\D$-class $D$ of $\ig{\cE}$
the $\J$-class containing $D$ coincides with $D$, and, furthermore, the corresponding principal factor $D^0$
is completely 0-simple and thus isomorphic to a suitable Rees matrix semigroup $\mathcal{M}^0[G;I,\Lambda;P]$.
By Remark \ref{rem:RLD}, the sets $I,\Lambda$ can be taken to be the sets enumerating the $\R$-classes (resp.\
$\L$-classes) of the corresponding $\D$-class of $\cE$, and thus a byproduct of the input data.

Furthermore, as noted in Theorems 3.10 and 4.2 of \cite{DGR}, the group $G$ and the sandwich matrix $P$ are also
known, and a finite presentation for $G$ (as well as $P$) can be algorithmically computed from $\cE$.
We will not repeat here the presentation (referring instead either to Theorem 5 of the seminal paper \cite{GR1}
or \cite[Theorem 4.2]{DGR}); we will remain content with saying that this presentation is given in terms of
generators $f_{i\lambda}$ such that $R_i\cap L_\lambda$ contains an idempotent ($e_{i\lambda}$ in $\cE$ or,
equivalently, $\ol{e_{i\lambda}}$ in $\ig{\cE}$) -- in which case $p_{\lambda i}= f_{i\lambda}^{-1}$ and
$p_{\lambda i}=0$ otherwise -- and the relators depend on three configurations in $\cE$ called
the \emph{anchors} (prompting some of the generators to be $=1$), the \emph{Schreier system} (yielding certain
equalities between the generators), and \emph{singular squares}. Since we are going to need it later, we describe
this latter, third group of relations: these are of the form
\begin{equation}
f_{i\lambda}^{-1}f_{i\mu} = f_{j\lambda}^{-1}f_{j\mu}, \label{sing-sq}
\end{equation}
where the idempotents $e_{i\lambda},e_{i\mu},e_{j\lambda},e_{j\mu}$ form a \emph{singular square} in $\cE$.
This means that there exists an idempotent $f\in E$ such that one of the following sets of conditions hold:
\begin{itemize}
\item[(a)] $f\ast e_{i\lambda}=e_{i\lambda}$, $f\ast e_{j\lambda}=e_{j\lambda}$, $e_{i\lambda}\ast f=e_{i\mu}$,
$e_{j\lambda}\ast f=e_{j\mu}$;
\item[(b)] $e_{i\lambda}\ast f=e_{i\lambda}$, $e_{j\lambda}\ast f=e_{j\lambda}$, $f\ast e_{i\lambda}=e_{j\lambda}$,
$f\ast e_{i\mu}=e_{j\mu}$.
\end{itemize}
An important observation to be utilised later is that the maximal subgroup of $D$ is free whenever there are
no singular squares in $D$. In particular, this will hold whenever $D$ is a maximal $\D$-class.

Let $\varphi:D^0 \to \mathcal{M}^0[G;I,\Lambda;P]$ be an isomorphism. If $L_D\subseteq E^+$ denotes the language
of all words over $E$ that represent an element of $D$, then $\varphi$ naturally induces a mapping $\ol\varphi:
L_D \to \mathcal{M}^0[G;I,\Lambda;P]$ defined by $\ol\varphi(w)=\varphi(\ol{w})$. In order to effectively
`Rees matrix coordinatise' $D$, it is our main goal here to show that there is an algorithm that computes one
such mapping $\ol\varphi$. As a starting point, note that for any idempotent $e_{i\lambda}\in D$ we necessarily
have
$$
\ol\varphi(e_{i\lambda}) = \varphi(\ol{e_{i\lambda}}) = (i,p_{\lambda i}^{-1},\lambda) = (i,f_{i\lambda},\lambda).
$$
The general idea is that if $w\in L_D$ then by Theorem \ref{thm:DGR}(ii) $w$ must have a seed, that is, a
factorisation $w=uev$ such that $\ol{e}\in D$ and $\ol{ev}\;\R\;\ol{e}\;\L\;\ol{ue}$; furthermore, as noted in
Remark \ref{wp-eff}, there is an algorithm which identifies at least one such factorisation. Since $\ol{e}\in D$,
the letter $e$ must, in fact, be $e_{i\lambda}$ for some $i\in I$, $\lambda\in\Lambda$, and so we already
know the `coordinates' $\varphi(\ol{e_{i\lambda}})$ of $e_{i\lambda}$ as above. Then, we will study the way
in which idempotents $\ol{e}$ act (from the right and from the left) on elements of $D$ corresponding to certain
Rees matrix triples, and obtain explicit formulae for these actions. Finally, we shall successively apply
these formulae for idempotents represented by letters of $u,v$ to compute $\ol\varphi(w)=\ol\varphi(uev)$.

In the sense just described, the following result (representing a significant generalisation of \cite[Lemma 8.9]{DGR})
is of key importance. Below, for a partial function $\alpha$, let $\fix(\alpha)$ denotes the set of fixed points of $\alpha$
(which in the case when the map $\alpha$ is idempotent coincides with the image of $\alpha$).

\begin{pro}\label{action}
Let $D$ be a regular $\D$-class of $\ig{\cE}$ and let $$\varphi:D^0 \to \mathcal{M}^0[G;I,\Lambda;P]$$ be an isomorphism. 
Let $\mathbf{r}=\varphi^{-1}(i,g,\lambda)\in D$ and $e\in E$.
\begin{itemize}
	
	\item[(1)] If $[\varphi^{-1}(i,g,\lambda)]\ol{e}\in D$ then $[\varphi^{-1}(i,g,\lambda)]\ol{e}\;\R\;
	\varphi^{-1}(i,g,\lambda)$, and if
	$\omega_{\lambda,e}\in G$ is such that $[\varphi^{-1}(i,g,\lambda)]\ol{e}=
	\varphi^{-1}(i,g\omega_{\lambda,e},\lambda')$ then for any $j\in I$ and $g'\in G$ we have
	$$
	[\varphi^{-1}(j,g',\lambda)]\ol{e} = \varphi^{-1}(j,g'\omega_{\lambda,e},\lambda').
	$$
	In this sense, $e$ induces an idempotent partial transformation $\tau_e$ (acting on the right) on $\Lambda$ given by
	$\lambda\mapsto\lambda'$ whenever $[\varphi^{-1}(i,g,\lambda)]\ol{e}=\varphi^{-1}(i,h,\lambda')$ for some
	$i\in I$ and $g,h\in G$. The partial transformation $\tau_e$ is effectively computable from $\cE$.
	
	\item[(2)] If $\ol{e}[\varphi^{-1}(i,g,\lambda)]\in D$ then $\ol{e}[\varphi^{-1}(i,g,\lambda)]\;\L\;
	\varphi^{-1}(i,g,\lambda)$, and if
	$\omega_{e,i}\in G$ is such that $\ol{e}[\varphi^{-1}(i,g,\lambda)]=
	\varphi^{-1}(i',\omega_{e,i}g,\lambda)$ then for any $\mu\in\Lambda$ and $g'\in G$ we have
	$$
	\ol{e}[\varphi^{-1}(i,g',\mu)] = \varphi^{-1}(i',\omega_{e,i}g',\mu).
	$$
	In this sense, $e$ induces an idempotent partial transformation $\sigma_e$ (acting on the left) on $I$ given by
	$i\mapsto i'$ whenever $\ol{e}[\varphi^{-1}(i,g,\lambda)]=\varphi^{-1}(i',h,\lambda)$ for some
	$\lambda\in\Lambda$ and $g,h\in G$. The partial transformation $\sigma_e$ is effectively computable from $\cE$.
	
	\item[(3)] The partial map $\tau_e$ is non-empty if and only if $\sigma_e$ is non-empty. If
	$\mathbf{r}\ol{e}\in D$ then $\fix(\sigma_e)\neq\es$, and for arbitrary $i_0\in\fix(\sigma_e)$ we have
	$\omega_{\lambda,e}=f_{i_0,\lambda}^{-1}f_{i_0,\lambda\tau_e}$ and thus
	$$
	[\varphi^{-1}(i,g,\lambda)]\ol{e}=\varphi^{-1}(i,gf_{i_0,\lambda}^{-1}f_{i_0,\lambda\tau_e},\lambda\tau_e).
	$$
	Dually, if $\ol{e}\mathbf{r}\in D$ then $\fix(\tau_e)\neq\es$, and for arbitrary
	$\lambda_0\in\fix(\tau_e)$ we have $\omega_{e,i}=f_{\sigma_e i,\lambda_0}f_{i,\lambda_0}^{-1}$ and so
	$$
	\ol{e}[\varphi^{-1}(i,g,\lambda)]=\varphi^{-1}(\sigma_e i,f_{\sigma_e i,\lambda_0}f_{i,\lambda_0}^{-1}g,\lambda).
	$$
	
\end{itemize}
\end{pro}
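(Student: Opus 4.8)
The plan is to prove (1) and (2) in parallel — they are left–right duals — and then deduce (3). For (1), assume $\mathbf r\ol e\in D$ with $\mathbf r=\varphi^{-1}(i,g,\lambda)$. Choose an idempotent $\ol{e_{i\mu}}\in R_i$ (available since $D$ is regular); then $\ol{e_{i\mu}}\,\mathbf r=\mathbf r$, so $\ol{e_{i\mu}}(\mathbf r\ol e)=\mathbf r\ol e\in D$, and since all three elements lie in $D$ we may pass to the completely $0$-simple principal factor $D^0\cong\mathcal M^0[G;I,\Lambda;P]$, where $(i,f_{i\mu},\mu)\,\varphi(\mathbf r\ol e)=\varphi(\mathbf r\ol e)$ forces $\varphi(\mathbf r\ol e)$ to have first coordinate $i$; thus $\mathbf r\ol e\;\R\;\ol{e_{i\mu}}\;\R\;\mathbf r$. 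For the uniformity statement, given $j\in I$ and $g'\in G$ set $\mathbf x=\varphi^{-1}(j,\,g'g^{-1}f_{i\mu},\,\mu)$, so that $\mathbf x\mathbf r=\varphi^{-1}(j,g',\lambda)$ (legitimate as $p_{\mu i}=f_{i\mu}^{-1}\neq0$); then $[\varphi^{-1}(j,g',\lambda)]\ol e=\mathbf x(\mathbf r\ol e)$, and since the corresponding product in $\mathcal M^0[G;I,\Lambda;P]$ is nonzero — and the $D^0$-product of two elements of $D$ is nonzero exactly when their product in $\ig{\cE}$ stays in $D$ — this element again lands in $D$ and equals $\varphi^{-1}(j,\,g'\omega_{\lambda,e},\,\lambda\tau_e)$ with the \emph{same} $\omega_{\lambda,e},\lambda\tau_e$ read off from $\mathbf r$. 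Hence $\tau_e$ and the elements $\omega_{\lambda,e}$ are well defined and $\tau_e$ preserves $\R$-classes; idempotence of $\tau_e$ follows from $\ol e\,\ol e=\ol e$, since $\lambda\in\dom\tau_e$ gives $(\mathbf r\ol e)\ol e=\mathbf r\ol e\in D$, i.e.\ $\lambda\tau_e\in\dom\tau_e$ and $(\lambda\tau_e)\tau_e=\lambda\tau_e$. For computability, for each $\lambda$ pick (via the computable $\R,\L$-structure of $\cE$) an idempotent $\ol{e_{k\lambda}}\in L_\lambda$ and apply Theorem \ref{thm:DGR}(i) to $e_{k\lambda}$ and the one-letter word $e$: this decides whether $\ol{e_{k\lambda}e}\;\R\;\ol{e_{k\lambda}}$ (equivalently $\lambda\in\dom\tau_e$) and, if so, returns $f\in E$ with $\ol{e_{k\lambda}e}\;\L\;\ol f$, identifying $\lambda\tau_e$. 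Part (2) is the exact dual, $\sigma_e$ being computed via the second algorithm of Theorem \ref{thm:DGR}(i).

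For (3), first suppose $\tau_e\neq\es$; fix $\lambda\in\dom\tau_e$ and an idempotent $\ol{e_{k\lambda}}\in L_\lambda$, so $\ol{e_{k\lambda}e}\in D$ is $\L$-related to some idempotent $\ol{e_{k'\nu}}\in D$, $\nu=\lambda\tau_e$. Then $\ol{e_{k\lambda}}\,(\ol e\,\ol{e_{k'\nu}})=(\ol{e_{k\lambda}e})\,\ol{e_{k'\nu}}=\ol{e_{k\lambda}e}\in D$, and since right multiplication cannot raise the $\J$-class, $\ol e\,\ol{e_{k'\nu}}$ cannot lie strictly below $D$, hence $\ol e\,\ol{e_{k'\nu}}\in D$ and $\sigma_e\neq\es$; the converse is dual. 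As $\sigma_e,\tau_e$ are idempotent partial transformations, $\fix(\sigma_e)=\im(\sigma_e)$ and $\fix(\tau_e)=\im(\tau_e)$, so $\mathbf r\ol e\in D$ forces $\fix(\sigma_e)\neq\es$. Finally, $\ol e\,\ol e=\ol e$ combined with the formulas of (1)–(2) gives $\omega_{\mu,e}=1$ for all $\mu\in\fix(\tau_e)$ and $\omega_{e,i}=1$ for all $i\in\fix(\sigma_e)$: for instance, if $\mathbf s\in D$ has first coordinate $j$ and $\ol e\mathbf s\in D$, then comparing $\ol e(\ol e\mathbf s)$ with $\ol e\mathbf s$ and using $\sigma_e(\sigma_e j)=\sigma_e j$ gives $\omega_{e,\sigma_e j}\omega_{e,j}=\omega_{e,j}$, and $\sigma_e j$ ranges over $\fix(\sigma_e)$.

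Now suppose $\mathbf r\ol e\in D$ with $\mathbf r=\varphi^{-1}(i,g,\lambda)$, so $\lambda\in\dom\tau_e$, and let $i_0\in\fix(\sigma_e)$. Granting that $R_{i_0}\cap L_\lambda$ contains an idempotent $\ol{e_{i_0\lambda}}$ (this is the point below that needs care), part (2) together with $\omega_{e,i_0}=1$ gives $\ol e\,\ol{e_{i_0\lambda}}=\ol{e_{i_0\lambda}}$, whence $(\ol{e_{i_0\lambda}}\,\ol e)^2=\ol{e_{i_0\lambda}}(\ol e\,\ol{e_{i_0\lambda}})\ol e=\ol{e_{i_0\lambda}}\,\ol e$; so $\ol{e_{i_0\lambda}}\,\ol e$ is an idempotent, and by part (1) it lies in $R_{i_0}\cap L_{\lambda\tau_e}$, hence equals $\ol{e_{i_0,\lambda\tau_e}}$ (in particular $f_{i_0,\lambda\tau_e}$ is defined). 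Comparing group coordinates in $\ol{e_{i_0\lambda}}\,\ol e=\varphi^{-1}(i_0,\,f_{i_0,\lambda}\omega_{\lambda,e},\,\lambda\tau_e)=\varphi^{-1}(i_0,\,f_{i_0,\lambda\tau_e},\,\lambda\tau_e)$ yields $\omega_{\lambda,e}=f_{i_0,\lambda}^{-1}f_{i_0,\lambda\tau_e}$, and substituting this into the formula of (1) gives the first displayed identity of (3); the dual identity for the case $\ol e\mathbf r\in D$ is obtained symmetrically, and the algorithm computing $\sigma_e,\tau_e$ is as in (1)–(2).

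The main obstacle, as I see it, is exactly the bracketed claim: for $i_0\in\fix(\sigma_e)$ and $\lambda\in\dom(\tau_e)$, the $\H$-class $R_{i_0}\cap L_\lambda$ contains an idempotent — equivalently, $\dom(\sigma_e)\times\dom(\tau_e)$ consists entirely of idempotent positions. Here the abstract completely-$0$-simple structure is not enough and one must use the nature of $\ig{\cE}$. Writing $\mathbf r=\ol p$ and applying Theorem \ref{thm:DGR}(ii) to the regular element $\mathbf r\ol e=\ol{pe}$: either the chosen seed is the final letter $e$ — then $\ol e\in D$, say $\ol e=\ol{e_{k\mu}}$, so $\fix(\sigma_e)=\{k\}$, and $\lambda\in\dom(\tau_e)$ says precisely that $p_{\lambda k}\neq 0$, i.e.\ $f_{k,\lambda}$ is defined — or the seed is a letter $\ol{e''}\in D$ occurring already in $p$, so $\mathbf r$ and $\mathbf r\ol e$ share the idempotent seed $\ol{e''}$, and one propagates $\ol{e''}$ through the Rees coordinatisation, using the transfer $\ol e\,\epsilon\in D$ from the argument of (3a) together with the identities $\omega_{e,i_0}=1$, $\omega_{\mu,e}=1$, to manufacture the required idempotent; alternatively one argues through the singular squares \eqref{sing-sq} of $\cE$.
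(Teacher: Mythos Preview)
Your arguments for (1) and (2) are fine and essentially equivalent to the paper's (the paper writes $\mathbf r$ as a product of idempotents from $D$ via Fitz-Gerald's lemma and uses a seed to get $\mathbf r\ol e\;\R\;\mathbf r$, then does the same Rees-matrix left-multiplication trick for uniformity; your ``pick an idempotent in $R_i$'' shortcut works equally well). Your equivalence $\tau_e\neq\es\Leftrightarrow\sigma_e\neq\es$ in (3) is also correct and arguably cleaner than the paper's.

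The gap is exactly where you flag it: the claim that $R_{i_0}\cap L_\lambda$ contains an idempotent for every $i_0\in\fix(\sigma_e)$ and $\lambda\in\dom(\tau_e)$. Your final paragraph does not prove this; the case analysis on the seed of $pe$ is incomplete (in the second case you never actually produce $e_{i_0\lambda}$), and ``propagate through the Rees coordinatisation'' and ``argue through the singular squares'' are not arguments. The paper avoids this obstacle by a different route: instead of trying to show $\ol e\,\ol{e_{i_0\lambda}}=\ol{e_{i_0\lambda}}$ a priori, it uses Fitz-Gerald's explicit rewriting procedure. Writing $\mathbf r=\ol{e_{i\lambda_1}}\cdots\ol{e_{i_n\lambda}}$ and choosing an inverse $\mathbf s'$ of $\mathbf s=\mathbf r\ol e$, the last Fitz-Gerald idempotent
\[
\mathbf f_{n+1}=\ol e\,\mathbf s'\,\ol{e_{i\lambda_1}}\cdots\ol{e_{i_n\lambda}}\,\ol e=\ol e\,(\mathbf s'\mathbf s)
\]
is an idempotent of $D$ with $\mathbf r\ol e=\mathbf r\,\mathbf f_{n+1}$. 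Since $\mathbf s'\mathbf s=\ol{e_{j\lambda'}}$ for some $j$ (with $\lambda'=\lambda\tau_e$), and $(\mathbf s'\mathbf s)\ol e=\mathbf s'\mathbf s$ shows $\{e_{j\lambda'},e\}$ is a basic pair, one gets $\mathbf f_{n+1}=\ol{e_{i_0\lambda'}}$ with $i_0=\sigma_ej\in\fix(\sigma_e)$. Now the formula $\omega_{\lambda,e}=f_{i_0\lambda}^{-1}f_{i_0\lambda'}$ drops out of the single Rees-matrix product $(i,g,\lambda)(i_0,f_{i_0\lambda'},\lambda')$, and the existence of $e_{i_0\lambda}$ is \emph{forced} because that product is nonzero in $D^0$, i.e.\ $p_{\lambda i_0}\neq 0$.

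So the Fitz-Gerald step simultaneously supplies (for one concrete $i_0$) the idempotent you were missing and the formula for $\omega_{\lambda,e}$. The passage to an arbitrary $i_0'\in\fix(\sigma_e)$ then goes via the singular-square relations \eqref{sing-sq}: $e$ singularises the square $e_{i_0\lambda},e_{i_0\lambda'},e_{i_0'\lambda},e_{i_0'\lambda'}$, whence $f_{i_0\lambda}^{-1}f_{i_0\lambda'}=f_{i_0'\lambda}^{-1}f_{i_0'\lambda'}$ in $G$. To make your approach complete you would still need to supply this last existence-and-singularisation argument; as written, your proposal stops short of it.
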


\begin{proof}
Let us start by proving (1). We are given that $[\varphi^{-1}(i,g,\lambda)]\ol{e}$ is $\D$-related
to $\varphi^{-1}(i,g,\lambda)$; in particular, $[\varphi^{-1}(i,g,\lambda)]\ol{e}$ is regular. So, by Lemma \ref{lem:FG} we can write
$\varphi^{-1}(i,g,\lambda)$ as a product of idempotents from its $\D$-class $D$:
$$
\mathbf{r}=\varphi^{-1}(i,g,\lambda) = \ol{e_{i\lambda_1}}\,\ol{e_{i_2\lambda_2}}\dots\ol{e_{i_n\lambda}},
$$
and, by Theorem \ref{thm:DGR}(ii), the product on the right-hand side has at least one seed, so that for some
$1\leq k\leq n$ we have
$$
\ol{e_{i_k\lambda_k}}\dots\ol{e_{i_n\lambda}}\,\ol{e}\;\R\;\ol{e_{i_k\lambda_k}}\;\R\;
\ol{e_{i_k\lambda_k}}\dots\ol{e_{i_n\lambda}}.
$$
Since $\R$ is a left congruence, by multiplying by $\ol{e_{i\lambda_1}}\dots\ol{e_{i_{k-1}\lambda_{k-1}}}$ from the left
we get $[\varphi^{-1}(i,g,\lambda)]\ol{e}\;\R\;\varphi^{-1}(i,g,\lambda)$, as required. Therefore, $[\varphi^{-1}(i,g,\lambda)]\ol{e}=
\varphi^{-1}(i,h,\lambda')$ for some $h\in G$ and $\lambda'\in\Lambda$. Define $\omega_{\lambda,e}=g^{-1}h$, and let
$j\in I$ and $g'\in G$ be arbitrary. Choose $k\in I$ and $\mu\in \Lambda$ such that $p_{\lambda k}\neq 0$ and $p_{\mu i}\neq 0$.
Since
$$
\varphi^{-1}(j,g',\lambda) = \varphi^{-1}(j,g',\lambda)\varphi^{-1}(k,f_{k\lambda}g^{-1}f_{i\mu},\mu)
\varphi^{-1}(i,g,\lambda)
$$
it follows that
$$
[\varphi^{-1}(j,g',\lambda)]\ol{e} = \varphi^{-1}(j,g',\lambda)\varphi^{-1}(k,f_{k\lambda}g^{-1}f_{i\mu},\mu)\varphi^{-1}
(i,h,\lambda')=\varphi^{-1}(j,g'\omega_{\lambda,e},\lambda'),
$$
just as claimed.

Furthermore, the partial transformation $\tau_e$ is computable from $\cE$ because by results already proved,
to compute $\lambda'=\lambda\tau_e$ we can start from $\ol{e_{i\lambda}}=\varphi^{-1}(i,f_{i\lambda},\lambda)$ whence
$\ol{e_{i\lambda}e}\in D$ and so $\ol{e_{i\lambda}e}\;\R\;\ol{e_{i\lambda}}$. Now Theorem \ref{thm:DGR}(i) guarantees that
there is an algorithm computing an idempotent $\ol{f}\in D$ such that $\ol{e_{i\lambda}e}\;\L\;\ol{f}$, and thus we can
identify the $\L$-class of $\ol{e_{i\lambda}e}$, which is exactly $L_{\ol{f}}=\varphi^{-1}(L_{\lambda'})$.

Since (2) is completely left-right dual to (1), we immediately move on to proving (3). To see this, assume that $\tau_e$ is
non-empty, so that $$[\varphi^{-1}(i,g,\lambda)]\ol{e}=\varphi^{-1}(i,g\omega_{\lambda,e},\lambda')$$ is an element of $D$
for some $i\in I$, $g\in G$ and $\lambda \in \Lambda$ (with $\lambda'=\lambda\tau_e$). We recall Fitz-Gerald's procedure
\cite[Lemma 2]{FG} (see also \cite[page 236, Exercise 12]{HoBook}) of turning
the product $\mathbf{r}\ol{e}=[\varphi^{-1}(i,g,\lambda)]\ol{e} = \ol{e_{i\lambda_1}}\dots \ol{e_{i_n\lambda}}\,\ol{e}$
into a product of idempotents from $D$ (note that $\ol{e}$ might be not in $D$, thus `generating' the necessity for rewriting
this product). Upon choosing an inverse $\mathbf{s}'$ of $\mathbf{s}=\mathbf{r}\ol{e}$, this consists in forming the products
$$
\mathbf{f}_k = \ol{e_{i_k\lambda_k}}\dots\ol{e_{i_n\lambda}}\,\ol{e}\mathbf{s}'\ol{e_{i\lambda_1}}\dots\ol{e_{i_k\lambda_k}}
$$
for all $1\leq k\leq n$, and
$$
\mathbf{f}_{n+1} = \ol{e}\mathbf{s}'\ol{e_{i\lambda_1}}\dots\ol{e_{i_n\lambda}}\,\ol{e},
$$
whence $\mathbf{s}=\mathbf{f}_1\dots\mathbf{f}_n\mathbf{f}_{n+1}$. We also know that each of these products is an idempotent,
and since for $1\leq k\leq n$, $\mathbf{f}_k$ is clearly both $\R$- and $\L$-related to $\ol{e_{i_k\lambda_k}}$, we must
have $\mathbf{f}_k=\ol{e_{i_k\lambda_k}}$; so, $\mathbf{r}=\mathbf{f}_1\dots\mathbf{f}_n$ and $\mathbf{s}=\mathbf{r}
\mathbf{f}_{n+1}$. Furthermore, as $\mathbf{s}\ol{e}=\mathbf{s}$, $\mathbf{f}_{n+1}=\ol{e}(\mathbf{s}'\mathbf{s})$ is 
an idempotent from $D$, just as $\mathbf{s}'\mathbf{s}$ is an idempotent, having the form 
$\varphi^{-1}(j,f_{j\lambda'},\lambda')$ for some $j\in I$ such that $R_j\cap L_{\lambda'}$
contains an idempotent. (Indeed, $\mathbf{s}\ol{e}=\mathbf{r}\ol{e}\,\ol{e}=\mathbf{r}\ol{e}=\mathbf{s}$ implying
$(\mathbf{s}'\mathbf{s})\ol{e}=\mathbf{s}'\mathbf{s}$, so that $\{e_{j\lambda'},e\}$ is a basic pair.) This shows (by part (2))
that $\sigma_e$ cannot be empty, as it is defined on $j$. The converse implication is dual.

Finally, let us write $i_0=\sigma_e j$. We now have $\mathbf{f}_{n+1}=\varphi^{-1}(i_0,f_{i_0\lambda'},\lambda')$, and hence
$$
\varphi^{-1}(i,g\omega_{\lambda,e},\lambda') = [\varphi^{-1}(i,g,\lambda)]\ol{e} = [\varphi^{-1}(i,g,\lambda)]\mathbf{f}_{n+1} =
\varphi^{-1}(i,gf_{i_0,\lambda}^{-1}f_{i_0,\lambda'},\lambda'),
$$
which immediately implies the expression for $\omega_{\lambda,e}$. Notice that by the defining relations \eqref{sing-sq} of $G$,
the actual choice of $i_0$ (and thus of $j$) is immaterial: if $i'_0$ is another image (fixed) point of $\sigma_e$ then $e$ takes
the role of $f$ in case (a) of the definition of singular squares above, singularising the square of idempotents
$e_{i_0\lambda},e_{i_0\lambda'},e_{i'_0\lambda},e_{i'_0\lambda',}$, so we have that $f_{i_0,\lambda}^{-1}f_{i_0,\lambda'}=
f_{i'_0,\lambda}^{-1}f_{i'_0,\lambda'}$ holds in $G$. The formula for $\omega_{e,i}$ follows dually.
\end{proof}

\begin{rmk}
The idempotent $\ol{e}$ from the previous proposition must be from a $\D$-class that is (non-strictly) $\J$-above $D$, the
$\D$-class of $\varphi^{-1}(i,g,\lambda)$.
\end{rmk}

We can now present the result that describes the effective transformation of a word representing an element of a regular $\D$-class
$D$ of $\ig{\cE}$ into a Rees matrix triple.

\begin{thm}\label{thm:Rees}
Let $w\in E^+$ be such that $\ol{w}\in D$ is a regular element of $\ig{\cE}$, and let $w=ue_{i\lambda}v$ be a factorisation of
$w$ determining the position of a seed of $w$, so that $\ol{e_{i\lambda}}\in D$ and $\ol{ue_{i\lambda}}\;\L\;\ol{e_{i\lambda}}
\;\R\;\ol{e_{i\lambda}v}$. Furthermore, let $u=g_1\dots g_k$ and $v=h_1\dots h_l$ for $g_p,h_q\in E$, $1\leq p\leq k$,
$1\leq q\leq l$. Then
$$
\ol\varphi(w) = \left(i_1,f_{i_1\mu_1}^{}f_{i_2\mu_1}^{-1}\dots f_{i_k\mu_k}^{}f_{i\mu_k}^{-1}f_{i\lambda}^{}
f_{j_1\lambda}^{-1}f_{j_1\lambda_1}^{}\dots f_{j_l\lambda_{l-1}}^{-1}f_{j_l\lambda_l}^{},\lambda_l\right),
$$
where $\lambda_1=\lambda\tau_{h_1}$, $\lambda_{q+1}=\lambda_q\tau_{h_{q+1}}$ for $1\leq q<l$, $j_q\in\fix(\sigma_{h_q})$ for
$1\leq q\leq l$, $i_k=\sigma_{g_k}i$, $i_p=\sigma_{g_p}i_{p+1}$ for $1\leq p<k$, and $\mu_p\in\fix(\tau_{g_p})$ for
$1\leq p\leq k$.
\end{thm}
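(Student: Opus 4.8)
The plan is to compute $\ol\varphi(w)$ directly from the factorisation $w = g_1\cdots g_k\, e_{i\lambda}\, h_1\cdots h_l$ by starting from the known coordinates of the seed, $\ol\varphi(e_{i\lambda}) = \varphi(\ol{e_{i\lambda}}) = (i, f_{i\lambda}, \lambda)$, and then multiplying step by step: first on the right by $\ol{h_1}, \ol{h_2}, \dots, \ol{h_l}$ to obtain $\ol\varphi(e_{i\lambda}v)$, and afterwards on the left by $\ol{g_k}, \ol{g_{k-1}}, \dots, \ol{g_1}$ to reach $\ol\varphi(w)$. At each stage the single tool used is Proposition \ref{action}(3). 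The crucial preliminary observation, which makes that proposition applicable throughout, is that the seed property $\ol{ue_{i\lambda}}\;\L\;\ol{e_{i\lambda}}\;\R\;\ol{e_{i\lambda}v}$ together with Remark \ref{the-rem}(i) guarantees that \emph{every} intermediate product stays in $D$: every prefix extension $\ol{e_{i\lambda}h_1\cdots h_q}$ (for $0\le q\le l$) is $\R$-related to $\ol{e_{i\lambda}}$, and every left extension $\ol{g_p\cdots g_k e_{i\lambda}v}$ (for $1\le p\le k+1$) is $\L$-related to $\ol{e_{i\lambda}v}$; in particular all of these lie in $D$.

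First I would treat the right-hand multiplications by induction on $q$, with $\lambda_0 = \lambda$ and $\lambda_q = \lambda_{q-1}\tau_{h_q}$. Suppose inductively that $\ol{e_{i\lambda}h_1\cdots h_{q-1}} = \varphi^{-1}(i, G_{q-1}, \lambda_{q-1})$ for some $G_{q-1}\in G$ (the base case $G_0 = f_{i\lambda}$ being exactly the seed). Since $\ol{e_{i\lambda}h_1\cdots h_q}\in D$, the first formula of Proposition \ref{action}(3), applied with $e = h_q$ and an arbitrary $j_q\in\fix(\sigma_{h_q})$, gives $\ol{e_{i\lambda}h_1\cdots h_q} = \varphi^{-1}\bigl(i,\, G_{q-1}f_{j_q\lambda_{q-1}}^{-1}f_{j_q\lambda_q},\, \lambda_q\bigr)$, so the $\R$-index is unchanged, the $\L$-index advances by $\tau_{h_q}$, and the group coordinate picks up $f_{j_q\lambda_{q-1}}^{-1}f_{j_q\lambda_q}$ on the right. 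Iterating to $q=l$ yields $\ol{e_{i\lambda}v} = \varphi^{-1}\bigl(i,\, f_{i\lambda}f_{j_1\lambda}^{-1}f_{j_1\lambda_1}\cdots f_{j_l\lambda_{l-1}}^{-1}f_{j_l\lambda_l},\, \lambda_l\bigr)$, which is precisely the right-hand tail of the claimed expression.

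Next I would treat the left-hand multiplications dually, by induction on $p$ descending from $k$ to $1$, with $i_{k+1} = i$ and $i_p = \sigma_{g_p}i_{p+1}$. Starting from $\ol{e_{i\lambda}v}$, at step $p$ the current element has the form $\varphi^{-1}(i_{p+1}, G', \lambda_l)$; since both $\ol{g_{p+1}\cdots g_k e_{i\lambda}v}$ and $\ol{g_p\cdots g_k e_{i\lambda}v}$ lie in $D$, the dual formula of Proposition \ref{action}(3), applied with $e = g_p$ and an arbitrary $\mu_p\in\fix(\tau_{g_p})$, changes the $\R$-index from $i_{p+1}$ to $i_p$, leaves the $\L$-index at $\lambda_l$, and multiplies the group coordinate on the \emph{left} by $f_{i_p\mu_p}f_{i_{p+1}\mu_p}^{-1}$. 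Composing all $k$ steps prepends $f_{i_1\mu_1}f_{i_2\mu_1}^{-1}f_{i_2\mu_2}f_{i_3\mu_2}^{-1}\cdots f_{i_k\mu_k}f_{i\mu_k}^{-1}$ to the group element found above and leaves the final triple with $\R$-index $i_1$ and $\L$-index $\lambda_l$. Since $\ol w = \ol{g_1\cdots g_k}\,\ol{e_{i\lambda}}\,\ol{h_1\cdots h_l}$ and $\ol\varphi$ is the restriction of the semigroup isomorphism $\varphi$, this is exactly the stated formula for $\ol\varphi(w)$.

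The only real obstacle is bookkeeping rather than ideas: one must keep the telescoping products of the $f_{\ast\ast}$ in the correct order (they accumulate on the right during the $h$-phase and on the left during the $g$-phase) and must verify at each step that the relevant product does not drop below $D$, so that Proposition \ref{action}(3) genuinely applies. Both points are handled by the observation in the first paragraph, and the potential ambiguity — that the formula should not depend on which fixed points $j_q\in\fix(\sigma_{h_q})$ or $\mu_p\in\fix(\tau_{g_p})$ are chosen — is already resolved inside Proposition \ref{action}(3) via the singular-square relations \eqref{sing-sq}. Hence no further substantial argument is needed beyond a careful induction.
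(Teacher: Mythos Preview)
Your proposal is correct and follows essentially the same approach as the paper: start from the seed $\ol\varphi(e_{i\lambda})=(i,f_{i\lambda},\lambda)$ and repeatedly apply Proposition~\ref{action}(3), using Remark~\ref{the-rem}(i) to guarantee that every intermediate product stays in $D$. The only cosmetic difference is that the paper computes $\ol{ue_{i\lambda}}$ and $\ol{e_{i\lambda}v}$ separately and then multiplies them via $\varphi(\ol{w})=\varphi(\ol{ue_{i\lambda}})\varphi(\ol{e_{i\lambda}v})$ (using $\ol{e_{i\lambda}}\,\ol{e_{i\lambda}}=\ol{e_{i\lambda}}$), whereas you left-multiply the $g_p$'s directly onto $\ol{e_{i\lambda}v}$; the bookkeeping and the resulting formula are identical.
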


\begin{proof}
We have $\ol{e_{i\lambda}v}=[\varphi^{-1}(i,f_{i\lambda},\lambda)]\ol{h_1}\dots\ol{h_l}$, and the right-hand side is now seen to
be equal to
$$
\varphi^{-1}\left(i,f_{i\lambda}^{}f_{j_1\lambda}^{-1}f_{j_1\lambda_1}^{}f_{j_2\lambda_1}^{-1}\dots f_{j_l\lambda_{l-1}}^{-1}
f_{j_l\lambda_l}^{},\lambda_l\right)
$$
by repeated applications of Proposition \ref{action}(1) and the first formula in (3). In each step, $\lambda_q=\lambda_{q-1}\tau_{h_q}$
(here we set $\lambda_0=\lambda$) is defined because
$$\ol{e_{i\lambda}v}\;\R\;\ol{e_{i\lambda}h_1\dots h_{q-1}h_q}\;\R\;\ol{e_{i\lambda}h_1\dots h_{q-1}}\;\R\;\ol{e_{i\lambda}}$$
and, consequently, the idempotent partial map $\sigma_{h_q}$ is not empty, thus $\fix(\sigma_{h_q})\neq\es$. By left-right duality,
Proposition \ref{action}(2) and the second formula in (3) imply
$$
\ol{ue_{i\lambda}} = \varphi^{-1}\left(i_1,f_{i_1\mu_1}^{}f_{i_2\mu_1}^{-1}\dots f_{i_k\mu_{k-1}}^{-1}f_{i_k\mu_k}^{}f_{i\mu_k}^{-1}
f_{i\lambda}^{},\lambda\right),
$$
and analogous statements justifying the existence of $i_p$ and $\mu_p$ hold. The theorem now follows as a consequence of
$\ol\varphi(w)=\varphi(\ol{w})=\varphi(\ol{ue_{i\lambda}v})=\varphi(\ol{ue_{i\lambda}})\varphi(\ol{e_{i\lambda}v})$.
\end{proof}

\begin{rmk}
\begin{itemize}
\item[(i)] The previous theorem shows that there is an algorithm which, given a word $w\in E^+$ such that $\ol{w}\in D$ is regular,
computes $\ol\varphi(w)=\varphi(\ol{w})$. This follows because all the parameters involved in the formula depend solely on the
knowledge of partial mappings $\sigma_e,\tau_e$, $e\in E$, acting on the index sets of $\R$-classes (resp.\ $\L$-classes) of $D$,
and we have shown in Proposition \ref{action} that there exist algorithms computing these mapping from the given biordered set $\cE$.
\item[(ii)] The previous theorem also yields an effective version of Lemma \ref{lem:FG} in the considered context (that is, $\ol{w}
\in D$), turning a word $w$ whose value is in $D$ into a word over $E_D$, the set of all $e\in E$ such that $\ol{e}\in D$, because
it is straightforward to verify that
$$
\ol\varphi(w) = \ol\varphi(e_{i_1\mu_1}\dots e_{i_k\mu_k}e_{i\lambda}e_{j_1\lambda_1}\dots e_{j_l\lambda_l}),
$$
with all the parameters as in the previous theorem. Therefore,
$$\ol{w} = \ol{e_{i_1\mu_1}\dots e_{i_k\mu_k}e_{i\lambda}e_{j_1\lambda_1}\dots e_{j_l\lambda_l}}$$
holds in $\ig{\cE}$.
\end{itemize}
\end{rmk}

To simplify the notation in the remainder of the paper, we are going to abuse it slightly and simply use the triple $(i,g,\lambda)$
as short-hand for the regular element $\varphi^{-1}(i,g,\lambda)\in D$. In dropping $\varphi$ from the notation we assume from
now on that for each regular $\D$-class of $\ig{\cE}$ we have fixed an isomorphism from its principal factor to the corresponding
Rees matrix semigroup, namely the one given by Theorem \ref{thm:Rees}.

We now return to considering the relation $\simeq$ on the set of irreducible r-sequ\-en\-ces $(\mathbf{r}_1,\dots,
\mathbf{r}_m)$ (of regular elements of $\ig{\cE}$). Assume that the $\D$-fingerprint of $\mathbf{r}_1\dots
\mathbf{r}_m$ is $(D_1,\dots,D_m)$, so that $\mathbf{r}_k\in D_k$ for all $1\leq k\leq m$. (Note that these
$\D$-classes are not all necessarily different.) Each of these $\D$-classes is `Rees matrix coordinatised'
as described in the present section, so that elements of $D_k$, $1\leq k\leq m$, are identified with triples of the form
$(i_k,g_k,\lambda_k)\in I^{(k)}\times G^{(k)}\times \Lambda^{(k)}$. Our aim is to describe the $\simeq$ relation between
tuples of these Rees matrix triples corresponding to irreducible r-sequences.

Bearing in mind the definitions of $\sim$ and $\simeq$, we have that the latter relation is the reflexive-transitive
closure of the relation $\lra$, where $(\mathbf{r}_1,\dots,\mathbf{r}_m) \lra (\mathbf{s}_1,\dots,\mathbf{s}_m)$
if and only if either
\begin{enumerate}
\item[(i)] $\mathbf{r}_k=\mathbf{s}_k\ol{e}$ and $\mathbf{s}_{k+1}=\ol{e}\mathbf{r}_{k+1}$ for some $e\in E$ and
$k<m$, and $\mathbf{r}_l=\mathbf{s}_l$ for all $l\not\in\{k,k+1\}$, or
\item[(ii)] $\mathbf{s}_k=\mathbf{r}_k\ol{e}$ and $\mathbf{r}_{k+1}=\ol{e}\mathbf{s}_{k+1}$ for some $e\in E$ and
$k<m$, and $\mathbf{r}_l=\mathbf{s}_l$ for all $l\not\in\{k,k+1\}$.
\end{enumerate}
This immediately translates to $$((i_1,g_1,\lambda_1),\dots,(i_m,g_m,\lambda_m)) \lra ((j_1,h_1,\mu_1),\dots,
(j_m,h_m,\mu_m))$$ if and only if either
\begin{enumerate}
\item[(i)] $(i_k,g_k,\lambda_k)=(j_k,h_k,\mu_k)\ol{e}$ and $(j_{k+1},h_{k+1},\mu_{k+1})=\ol{e}(i_{k+1},g_{k+1},
\lambda_{k+1})$ for some $e\in E$ and $k<m$, and $(i_l,g_l,\lambda_l)=(j_l,h_l,\mu_l)$ for all
$l\not\in\{k,k+1\}$, or
\item[(ii)] $(j_k,h_k,\mu_k)=(i_k,g_k,\lambda_k)\ol{e}$ and $(i_{k+1},g_{k+1},\lambda_{k+1})=\ol{e}(j_{k+1},h_{k+1},
\mu_{k+1})$ for some $e\in E$ and $k<m$, and $(i_l,g_l,\lambda_l)=(j_l,h_l,\mu_l)$ for all $l\not\in\{k,k+1\}$.
\end{enumerate}
Note that the form of the action of the idempotent $\ol{e}$ on a triple is exactly the one described in Proposition \ref{action}.

Therefore, we can conclude this section by saying that in the course of establishing $((i_1,g_1,\lambda_1),\dots,(i_m,g_m,\lambda_m))
\simeq ((j_1,h_1,\mu_1),\dots,(j_m,h_m,\mu_m))$ for two irreducible r-sequences of (elements represented by) triples, a single step
involves picking two adjacent triples in the sequence, say $(i,g,\lambda)\in D_k$ and $(j,h,\mu)\in D_{k+1}$, and then either
performing an inverse action of an idempotent $\ol{e}$ (that is $\J$-above both $D_k$ and $D_{k+1}$) on the first factor from the right
and then applying $\ol{e}$ on the second factor from the left, or the other way round: performing an inverse action of $\ol{e}$ on the
second factor from the left and applying $\ol{e}$ on the first factor from the right. Either way, the described transformation prompts
the pair $(\lambda,j)$ of `inner indices' to change (depending on the maps $\sigma_e,\tau_e$), and also the group components $g,h$
get multiplied from the right and left, respectively, by certain factors belonging to their corresponding groups $G^{(k)},G^{(k+1)}$.
This observation suggests that `in between' any two regular $\D$-classes $D_k,D_{k+1}$ that are adjacent in a fixed $\D$-fingerprint,
there exists an associated transition system with the set of states $\Lambda^{(k)}\times I^{(k+1)}$ whose transitions are labelled
both by elements of $E$ (the intervening idempotents) and pairs from $G^{(k)}\times G^{(k+1)}$ (the right and left multipliers of
the group parts). These systems will be called \emph{contact automata} (of two regular $\D$-classes), and they will be introduced and
studied in the next section; eventually, they will bring about the required group-theoretical interpretation of the word problem
of $\ig{\cE}$.

\section{Contact automata}
\label{sec:contact}

Let $D_1,D_2$ be regular $\D$-classes of $\ig{\cE}$. Just as argued in the previous section, we assume that $D_k$, $k=1,2$, are
coordinatised by sets $I_k$ and $\Lambda_k$, indexing their $\R$- and $\L$-classes, respectively. Furthermore, let $G_k$ be
the maximal subgroup of $D_k$ defined by the presentation from \cite[Theorem 5]{GR1} (or \cite[Theorem 4.2]{DGR}) in terms
of generators $f_{i\lambda}^{(k)}$, where $i\in I_k$, $\lambda\in\Lambda_k$, are such that in $D_k$, $R_i^{(k)}\cap L_\lambda^{(k)}$
contains an idempotent (and thus is a group itself, isomorphic to $G_k$). Of course, for our purpose we are going to
be interested only in pairs of $\D$-classes $D_1,D_2$ such that $\mathbf{r}_1\mathbf{r}_2$ is not regular for some
$\mathbf{r}_k\in D_k$, $k=1,2$, but in principle the following definition works for an arbitrary (ordered) pair of $\D$-classes.
So, we define the \emph{contact automaton of $\D$-classes $D_1,D_2$} to be the ``transition system''
$$
\mathcal{A}(D_1,D_2) = (\Lambda_1\times I_2, E, \Delta, \ell),
$$
where $\Lambda_1\times I_2$ is the set of states, $E$ is the alphabet, $\Delta$ is the set of transitions and $\ell:
\Delta \to G_1\times G_2$ is the labelling function (associating a pair of group elements to each transition);
the precise definition of $\Delta$ and $\ell$ is given in what follows.

We have already seen in Proposition \ref{action} that for each $e\in E$, $\ol{e}$ induces partial transformations $\sigma_e^{(1)},
\sigma_e^{(2)}$ acting from the left on $I_1,I_2$, respectively, as well as partial transformations $\tau_e^{(1)},\tau_e^{(2)}$
acting from the right on $\Lambda_1,\Lambda_2$, respectively, with all the properties described in that proposition. So, we are
going to set
$$
\mathbf{t} = ((\lambda,i),e,(\mu,j)) \in \Delta
$$
if and only if either
\begin{itemize}
	\item $\lambda = \mu\tau_e^{(1)}$ and $\sigma_e^{(2)}i = j$, or
	\item $\lambda\tau_e^{(1)}=\mu$ and $i = \sigma_e^{(2)}j$.
\end{itemize}
If this is indeed the case, we define the label of the transition $\mathbf{t}$ by
$$
\ell(\mathbf{t}) = ((f_{i_0\lambda}^{(1)})^{-1}f_{i_0\mu}^{(1)},f_{j\lambda_0}^{(2)}(f_{i\lambda_0}^{(2)})^{-1}),
$$
where $i_0$ is an arbitrary fixed (equivalently, image) point of $\sigma_e^{(1)}$ and $\lambda_0$ is an arbitrary fixed 
(equivalently, image) point of $\tau_e^{(2)}$. This is well-defined because by Proposition \ref{action}(3) we have that the
existence  of the transition $\mathbf{t}$ guarantees that both $\sigma_e^{(1)}$ and $\tau_e^{(2)}$ are non-empty idempotent
partial maps which thus have fixed points. As already explained in the proof of that proposition, it follows from the defining
relations of groups $G_1$ and $G_2$ that the actual choice of fixed points $i_0,\lambda_0$ is irrelevant.
Note that by the very definition of $\Delta$ we have that $\mathcal{A}(D_1,D_2)$ is a
two-way automaton (which is non-deterministic, because neither $\tau_e^{(1)}$ nor $\sigma_e^{(2)}$ are injective in general); 
that is, $\Delta$ is symmetric in the sense that if $\mathbf{t} =
((\lambda,i),e,(\mu,j)) \in \Delta$ then also $\mathbf{t}^{-1} = ((\mu,j),e,(\lambda,i)) \in \Delta$. The label of
$\mathbf{t}^{-1}$ is the inverse of that of $\mathbf{t}$, namely, if $\ell(\mathbf{t})=(g_1,g_2)$ then
$$
\ell(\mathbf{t}^{-1}) = \ell(\mathbf{t})^{-1} = (g_1^{-1},g_2^{-1}).
$$
Upon omitting the labelling function $\ell$ and specifying the initial and final states, $\mathcal{A}(D_1,D_2)$ becomes a
proper non-deterministic finite automaton.

A \emph{non-empty path} in $\mathcal{A}(D_1,D_2)$ is a sequence of the form
$$
\mathbf{p} = ((\lambda_1,i_1),e_1,(\lambda_2,i_2),e_2,\dots,(\lambda_r,i_r),e_r,(\lambda_{r+1},i_{r+1}))
$$
for some $r\geq 1$ (called the \emph{length} of the path) such that for all $1\leq q\leq r$ we have $((\lambda_q,i_q),e_q,
(\lambda_{q+1},i_{q+1}))\in\Delta$. (An empty path is just an empty sequence.)
By Kleene's Theorem \cite{HUBook}, the set of words $L(\lambda,i;\mu,j)\subseteq E^\ast$
read along all the paths connecting $(\lambda,i)$ and $(\mu,j)$ is a rational language. We also attach group labels to paths,
but at this point it is important first to nuance our definition of the labelling function; $\ell$ will not, in fact,
map into the direct product $G_1\times G_2$ but rather into $G_1\times G_2^\partial$ (here $H^\partial$ denotes the
\emph{dual} of the group $H$), where the operation $\star$ in this latter direct product is defined by
$(g_1,g_2)\star(h_1,h_2) = (g_1h_1,h_2g_2)$. Now, for a path as above we define
$$
\ell(\mathbf{p}) = \ell((\lambda_1,i_1),e_1,(\lambda_2,i_2)) \star \cdots \star \ell((\lambda_r,i_r),e_r,(\lambda_{r+1},i_{r+1})).
$$
The empty (trivial) path carries the label $(1_{G_1},1_{G_2})$.

The principal reason for introducing contact automata is explained by the following result.

\begin{thm}\label{thm-1}
For $1\leq k\leq m$, let $(i_k,g_k,\lambda_k),(j_k,h_k,\mu_k)\in D_k$, where $D_k$ is a $\D$-class of $\ig{\cE}$,
such that both $$(i_1,g_1,\lambda_1),\dots,(i_m,g_m,\lambda_m)$$ and $$(j_1,h_1,\mu_1),\dots,(j_m,h_m,\mu_m)$$
form irreducible r-sequences (with $\D$-fingerprint $(D_1,\dots,D_m)$). Then
$$
(i_1,g_1,\lambda_1)\dots (i_m,g_m,\lambda_m) = (j_1,h_1,\mu_1)\dots (j_m,h_m,\mu_m)
$$
holds in $\ig{\cE}$ if and only if $i_1=j_1$, $\lambda_m=\mu_m$, and there exist:
\begin{itemize}
	\item[(a)] for all $1\leq s<m$, a path $\mathbf{p}_s$ in the contact automaton $\mathcal{A}(D_s,D_{s+1})$ from
	$(\lambda_s,i_{s+1})$ to $(\mu_s,j_{s+1})$, and
	\item[(b)] $x_t\in G_t$, $2\leq t\leq m-1$,
\end{itemize}
such that
\begin{itemize}
	\item[(i)] $\ell(\mathbf{p}_1) = (g_1^{-1}h_1, x_2)$,
	\item[(ii)] $\ell(\mathbf{p}_r) = (g_r^{-1}x_r^{-1}h_r,x_{r+1})$ for all $2\leq r\leq m-2$,
	\item[(iii)] $\ell(\mathbf{p}_{m-1}) = (g_{m-1}^{-1}x_{m-1}^{-1}h_{m-1},h_mg_m^{-1})$.
\end{itemize}
\end{thm}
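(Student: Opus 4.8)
The plan is to unwind the definition of $\simeq$ in terms of single $\lra$-steps and to recognise that a chain of such steps is precisely a walk in the product of the contact automata. Recall from the end of Section \ref{sec:Rees} that $(i_1,g_1,\lambda_1)\dots(i_m,g_m,\lambda_m) = (j_1,h_1,\mu_1)\dots(j_m,h_m,\mu_m)$ holds in $\ig{\cE}$ if and only if the two tuples of triples are $\simeq$-related, and that $\simeq$ is the reflexive-transitive closure of $\lra$, a single $\lra$-step amounting to choosing an adjacent pair of factors $(i,g,\lambda)\in D_k$, $(j,h,\mu)\in D_{k+1}$ and either replacing them by $(i,g,\lambda)\ol{e}$ and $\ol{e}(j,h,\mu)$ or by $(i,g,\lambda)\ol{e}^{-1}$-action and the matching left action. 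By Proposition \ref{action}(3), performing such a step fixes the outer indices $i$ and $\mu$ of the two triples, changes the inner pair $(\lambda,j)$ according to the transition $((\lambda,j),e,(\lambda',j'))\in\Delta$ of $\mathcal{A}(D_k,D_{k+1})$, and multiplies $g$ on the right by $(f^{(k)}_{i_0\lambda})^{-1}f^{(k)}_{i_0\lambda'}$ and $h$ on the left by $f^{(k+1)}_{j'\lambda_0}(f^{(k+1)}_{j\lambda_0})^{-1}$ — exactly the first and second coordinate of $\ell(\mathbf{t})$ (the inverse action giving $\ell(\mathbf{t}^{-1})=\ell(\mathbf{t})^{-1}$, which is why $\Delta$ is symmetric and the automaton two-way). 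Here the choice of $\star$ on $G_1\times G_2^\partial$ is dictated by the fact that successive left multiplications compose in the opposite order.

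First I would prove the ``if'' direction. Given the paths $\mathbf{p}_s$ and the elements $x_t$, I build an explicit sequence of $\lra$-steps: I process the transitions along $\mathbf{p}_1$ on the pair of factors $(1,2)$, which transforms $(i_1,g_1,\lambda_1)(i_2,g_2,\lambda_2)$ into $(i_1,g_1\cdot g_1^{-1}h_1,\mu_1)(i_2,x_2\cdot g_2,\lambda_s\text{-end})$; crucially the label identity (i) makes the first factor become $(i_1,h_1,\mu_1)=(j_1,h_1,\mu_1)$ on the nose (using $i_1=j_1$), while the second factor is modified only in its inner index and by the left multiplier $x_2$. Then I move to the pair $(2,3)$ and run $\mathbf{p}_2$; identity (ii) for $r=2$ is engineered so that the cumulative effect on the second factor — the already-present left multiplier $x_2$ from the previous stage together with the right multiplier $g_2^{-1}x_2^{-1}h_2$ produced by $\mathbf{p}_2$ — collapses $(i_2,x_2g_2,\cdot)$ to $(i_2,h_2,\mu_2)$, provided we also know $i_2=j_2$ and the inner index has reached $\mu_2$. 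I would note that $i_2=j_2$ is not assumed but follows: after stage one the first two inner coordinates of the product tuple already match those of the target, and $\simeq$-equivalent irreducible r-sequences share their $\D$-fingerprint, so a short induction shows each intermediate tuple of triples, once its left part is ``locked in'', has the correct outer indices. Iterating, stage $r$ fixes the $r$-th factor and passes the multiplier $x_{r+1}$ forward; the last stage uses identity (iii), whose second coordinate $h_mg_m^{-1}$ is exactly the right multiplier needed so that $(i_m,x_{m-1}'g_m,\cdot)$ — wait, one must be careful: the left action on the last factor is $h_mg_m^{-1}$ acting from the left, turning $(i_m,g_m,\lambda_m)$ into $(i_m, h_mg_m^{-1}\cdot g_m,\lambda_m)=(j_m,h_m,\lambda_m)$, and since $\lambda_m=\mu_m$ we are done. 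Chaining all these $\lra$-steps yields $\simeq$, hence equality in $\ig{\cE}$.

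For the ``only if'' direction, I assume the two products are equal, hence the two tuples of triples are $\simeq$-related, so there is a finite chain of $\lra$-steps between them. The task is to extract the data in (a),(b),(i)--(iii). The outer-index conclusions $i_1=j_1$ and $\lambda_m=\mu_m$ are immediate, since no $\lra$-step ever alters the leftmost outer index of the first factor or the rightmost inner index of the last. For the rest, I would track, for each position $s\in\{1,\dots,m-1\}$, the sequence of $\lra$-steps that act on the pair $(s,s+1)$ (a single $\lra$-step acts on exactly one adjacent pair). Reading off the transitions used, in order, from each such step gives a walk in $\mathcal{A}(D_s,D_{s+1})$; the subtlety is that these steps are interleaved in the global chain, and a step at position $s$ only changes factor $s$ via a right multiplier and factor $s+1$ via a left multiplier, so the net change to factor $t$ is (right multiplier from position-$t$ steps)$\,\cdot g_t\cdot\,$(left multiplier from position-$(t-1)$ steps). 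I would define $x_t$ ($2\le t\le m-1$) to be precisely the accumulated left multiplier that position-$(t-1)$ steps deposit onto factor $t$; equivalently, the accumulated right multiplier that position-$t$ steps deposit onto factor $t$ is then $g_t^{-1}x_t^{-1}h_t$ by the requirement that the final value of factor $t$ is $h_t$ with the correct indices. Concatenating all position-$s$ transitions into one path $\mathbf{p}_s$ (using that the automaton is two-way and that concatenation of walks multiplies labels under $\star$) gives $\ell(\mathbf{p}_s)$ equal to the pair (total right multiplier on factor $s$, total left multiplier on factor $s+1$) = $(g_s^{-1}x_s^{-1}h_s, x_{s+1})$, with the boundary conventions $x_1$-term absent (giving (i), using $g_1\to h_1$) and $x_m$ replaced by the last left action $h_mg_m^{-1}$ (giving (iii)). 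The main obstacle is precisely this bookkeeping: justifying that the interleaving of steps at different positions is harmless, i.e.\ that only right multipliers flow rightward-to-left-factor and left multipliers leftward-to-right-factor with no cross-contamination, and that one may therefore reorder/group the steps position-by-position; this is where Proposition \ref{action} and the associativity of $\star$ on $G_s\times G_{s+1}^\partial$ do the real work, and where the dual operation $\star$ was chosen exactly to make the algebra come out right.
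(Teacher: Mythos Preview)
Your proposal is essentially correct and follows the same route as the paper: isolate the correspondence between a single $\lra$-step at position $k$ and a transition in $\mathcal{A}(D_k,D_{k+1})$ with label $(a_k^{-1}b_k,\,b_{k+1}a_{k+1}^{-1})$ (the paper states this as a standalone Claim), then translate a chain of $\lra$-steps into the data (a),(b),(i)--(iii) via the system $g_1y_1=h_1$, $x_tg_ty_t=h_t$ ($2\le t\le m-1$), $x_mg_m=h_m$ with $\ell(\mathbf{p}_s)=(y_s,x_{s+1})$. The paper does both directions at once by phrasing the Claim as an ``if and only if'', while you separate them; your interleaving concern in the ``only if'' direction is legitimate but, as you note, is resolved by the fact that a step at position $s$ touches only the pair $(\lambda_s,i_{s+1})$ and acts on $g_s$ from the right and on $g_{s+1}$ from the left, so the position-$s$ transitions extracted in order do form a path and the left/right multipliers separate by associativity (this is exactly why $\star$ lives in $G_s\times G_{s+1}^\partial$).

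One small slip in your ``if'' direction: after stage~1 you write the second factor as $(i_2,x_2g_2,\cdot)$ and then fret over whether $i_2=j_2$. But processing $\mathbf{p}_1$ already moves the $I$-index of factor~2 from $i_2$ to $j_2$, since $\mathbf{p}_1$ terminates at $(\mu_1,j_2)$; so after stage~1 the second factor is $(j_2,x_2g_2,\lambda_2)$, and stage~2 (which acts on the pair $(2,3)$) does not touch this $I$-index at all. Hence no separate argument that $i_2=j_2$ is needed, and your attempted inductive justification of it is unnecessary.
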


\begin{proof}
We begin with the following observation.

\medskip

\noindent\emph{Claim.}
We have
$$
((k_1,a_1,\nu_1),\dots,(k_m,a_m,\nu_m)) \lra ((k'_1,b_1,\nu'_1),\dots,(k'_m,b_m,\nu'_m))
$$
for two irreducible r-sequences with $\D$-fingerprint $(D_1,\dots,D_m)$ such that
$$(k_i,a_i,\nu_i)=(k_i',b_i,\nu'_i)\ol{e}\ \text{ and }\ (k'_{i+1},b_{i+1},\nu'_{i+1})=\ol{e}(k_{i+1},a_{i+1},
\nu_{i+1})$$ for some $e\in E$ and $i<m$, and $(k_l,a_l,\nu_l)=(k'_l,b_l,\nu'_l)$ for all $l\not\in\{i,i+1\}$
(that is, we have Case (i) from the definition of $\lra$)
if and only if the contact automaton $\mathcal{A}(D_i,D_{i+1})$ contains the transition
$$
\mathbf{t} = ((\nu_i,k_{i+1}),e,(\nu'_i,k'_{i+1}))
$$
with $\ell(\mathbf{t}) = (a_i^{-1}b_i,b_{i+1}a_{i+1}^{-1})$.

\medskip

Indeed, this claim follows as a rather straightforward consequence of Proposition \ref{action} and the very
definition of $\mathcal{A}(D_i,D_{i+1})$, bearing in mind that we have $(a_i,a_{i+1})\star\ell(\mathbf{t})=
(b_i,b_{i+1})$. Also, a claim dual to the above one also holds, corresponding to case (ii) of the definition of 
the relation $\lra$.

Now, by Theorem \ref{wp-sim} and the subsequent remarks at the end of the previous section, the given equality
$$(i_1,g_1,\lambda_1)\dots (i_m,g_m,\lambda_m) = (j_1,h_1,\mu_1)\dots (j_m,h_m,\mu_m)$$
is equivalent to the existence of the following array of irreducible r-sequences:
\begin{align*}
((i_1,g_1,\lambda_1),\dots,(i_m,g_m,\lambda_m)) &= ((k_1^{(1)},a_1^{(1)},\nu_1^{(1)}),\dots,(k_m^{(1)},a_m^{(1)},\nu_m^{(1)})) \\
((k_1^{(1)},a_1^{(1)},\nu_1^{(1)}),\dots,(k_m^{(1)},a_m^{(1)},\nu_m^{(1)})) &\lra
((k_1^{(2)},a_1^{(2)},\nu_1^{(2)}),\dots,(k_m^{(2)},a_m^{(2)},\nu_m^{(2)}))\\
&\vdots \\
((k_1^{(q-1)},a_1^{(q-1)},\nu_1^{(q-1)}),\dots) &\lra ((k_1^{(q)},a_1^{(q)},\nu_1^{(q)}),\dots)\\
((k_1^{(q)},a_1^{(q)},\nu_1^{(q)}),\dots,(k_m^{(q)},a_m^{(q)},\nu_m^{(q)})) &= ((j_1,h_1,\mu_1),\dots,(j_m,h_m,\mu_m))
\end{align*}
By our Claim, such an array exists if and only if $i_1=k_1^{(1)}=\dots=k_1^{(q)}=j_1$ and $\lambda_m=\nu_m^{(1)}=\dots=\nu_m^{(q)}
=\mu_m$, and each automaton $\mathcal{A}(D_s,D_{s+1})$ ($1\leq s<m$) contains a path $\mathbf{p}_s$ (the sum of lengths of
these paths must be $q-1$) from $(\lambda_s,i_{s+1})$ to $(\mu_s,j_{s+1})$ such that if $\ell(\mathbf{p}_s) = (y_s,x_{s+1})$ (here
$x_s,y_s\in G_s$) then
\begin{align*}
g_1y_1 &= h_1,\\
x_2g_2y_2 &= h_2,\\
&\vdots \\
x_{m-1}g_{m-1}y_{m-1} &= h_{m-1},\\
x_mg_m &= h_m.
\end{align*}
Solving this system for $y_1,\dots,y_{m-1}$ and $x_m$ yields the desired result.
\end{proof}

So, let $G_1,\dots,G_m$ be finitely presented groups, while $\rho_1,\dots,\rho_{m-1}$ are rational subsets of direct products
$G_1\times G_2^\partial,\dots,G_{m-1}\times G_m^\partial$, respectively. Given these parameters, we define the following algorithmic
problem $\mathbf{P}(G_1,\dots,G_m;\rho_1,\dots,\rho_{m-1})$:

\bigskip

\noindent INPUT: $a_k,b_k\in G_k$ ($1\leq k\leq m$).\\
OUTPUT: Decide if there exist $x_t\in G_t$, $2\leq t\leq m-1$, such that
		  \begin{align*}
      (a_1^{-1}b_1, x_2) &\in \rho_1,\\
      (a_r^{-1}x_r^{-1}b_r,x_{r+1}) &\in \rho_r\quad\quad (2\leq r\leq m-2),\\
      (a_{m-1}^{-1}x_{m-1}^{-1}b_{m-1},b_ma_m^{-1}) &\in \rho_{m-1}.
      \end{align*}

\bigskip

\begin{thm}\label{prob-p}
For $1\leq k\leq m$, let $(i_k,g_k,\lambda_k),(j_k,h_k,\mu_k)\in D_k$, where $D_k$ is a $\D$-class of $\ig{\cE}$,
such that both  $$(i_1,g_1,\lambda_1),\dots,(i_m,g_m,\lambda_m)$$ and $$(j_1,h_1,\mu_1),\dots,(j_m,h_m,\mu_m)$$
form irreducible r-sequences, and let $I_k$ (resp.\ $\Lambda_k$) be an index set for the $\R$-(resp.\ $\L$-)classes of $D_k$. There exist rational subsets
$\rho_s(\lambda,i;\mu,j)$ of $G_s\times G_{s+1}^\partial$, $1\leq s<m$, $\lambda,\mu\in\Lambda_s$, $i,j\in I_{s+1}$, that are
effectively computable from the biordered set $\cE$, such that the equality
$$
(i_1,g_1,\lambda_1)\dots (i_m,g_m,\lambda_m) = (j_1,h_1,\mu_1)\dots (j_m,h_m,\mu_m)
$$
holds in $\ig{\cE}$ if and only if $i_1=j_1$, $\lambda_m=\mu_m$, and the problem
$$
\mathbf{P}(G_1,\dots,G_m;\rho_1(\lambda_1,i_2;\mu_1,j_2),\dots,\rho_{m-1}(\lambda_{m-1},i_m;\mu_{m-1},j_m))
$$
returns a positive answer on input $g_k,h_k\in G_k$, $1\leq k\leq m$.
\end{thm}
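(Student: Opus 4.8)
The plan is to obtain Theorem \ref{prob-p} as an essentially formal consequence of Theorem \ref{thm-1}, the only substantive new work being the verification that the rational subsets $\rho_s(\lambda,i;\mu,j)$ are effectively computable from $\cE$. First I would fix, for each $1\le s<m$ and each pair of states $(\lambda,i),(\mu,j)\in\Lambda_s\times I_{s+1}$, the language $L(\lambda,i;\mu,j)\subseteq E^\ast$ of all words read along paths in the contact automaton $\mathcal{A}(D_s,D_{s+1})$ from $(\lambda,i)$ to $(\mu,j)$; by Kleene's Theorem this is a rational language, and since $\mathcal{A}(D_s,D_{s+1})$ is a finite automaton whose transitions and (group) labels are computable from $\cE$ by Proposition \ref{action} (the maps $\sigma_e,\tau_e$ and the words $f_{i\lambda}$ being algorithmically available), a finite automaton, hence a rational expression, for $L(\lambda,i;\mu,j)$ can be produced effectively. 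Applying the label homomorphism $\ell$ to this language, i.e.\ reading off the accumulated label $\ell(\mathbf p)\in G_s\times G_{s+1}^\partial$ along each accepting path, one defines
$$
\rho_s(\lambda,i;\mu,j) = \{\ell(\mathbf p):\ \mathbf p\text{ a path in }\mathcal{A}(D_s,D_{s+1})\text{ from }(\lambda,i)\text{ to }(\mu,j)\}.
$$
Because $\ell$ assigns to each transition a fixed element of the finitely generated monoid $G_s\times G_{s+1}^\partial$ (and these elements are computable), $\rho_s(\lambda,i;\mu,j)$ is the image of a rational language under a monoid homomorphism into $G_s\times G_{s+1}^\partial$, hence a rational subset of that group, and the rational expression for it is obtained from that of $L(\lambda,i;\mu,j)$ by substituting each letter by its $\ell$-label. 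This gives the asserted effective computability.

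Next I would match the combinatorics of Theorem \ref{thm-1} with the definition of the problem $\mathbf P(G_1,\dots,G_m;\rho_1,\dots,\rho_{m-1})$. By Theorem \ref{thm-1}, the equality
$$
(i_1,g_1,\lambda_1)\dots(i_m,g_m,\lambda_m) = (j_1,h_1,\mu_1)\dots(j_m,h_m,\mu_m)
$$
holds in $\ig{\cE}$ if and only if $i_1=j_1$, $\lambda_m=\mu_m$, and there exist paths $\mathbf p_s$ in $\mathcal{A}(D_s,D_{s+1})$ from $(\lambda_s,i_{s+1})$ to $(\mu_s,j_{s+1})$ and elements $x_t\in G_t$ ($2\le t\le m-1$) satisfying the label equations (i)--(iii) of that theorem. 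Observing that ``there is a path $\mathbf p_s$ from $(\lambda_s,i_{s+1})$ to $(\mu_s,j_{s+1})$ with $\ell(\mathbf p_s)$ equal to a prescribed pair'' is, by the very definition of $\rho_s(\lambda_s,i_{s+1};\mu_s,j_{s+1})$, the same as requiring that prescribed pair to lie in $\rho_s(\lambda_s,i_{s+1};\mu_s,j_{s+1})$, I would rewrite conditions (i)--(iii) as the membership conditions
$$
(g_1^{-1}h_1,x_2)\in\rho_1(\lambda_1,i_2;\mu_1,j_2),\quad
(g_r^{-1}x_r^{-1}h_r,x_{r+1})\in\rho_r(\lambda_r,i_{r+1};\mu_r,j_{r+1}),\quad
(g_{m-1}^{-1}x_{m-1}^{-1}h_{m-1},h_mg_m^{-1})\in\rho_{m-1}(\lambda_{m-1},i_m;\mu_{m-1},j_m).
$$
These are precisely the conditions in the OUTPUT clause of $\mathbf P(G_1,\dots,G_m;\rho_1(\lambda_1,i_2;\mu_1,j_2),\dots,\rho_{m-1}(\lambda_{m-1},i_m;\mu_{m-1},j_m))$ with $a_k=g_k$, $b_k=h_k$. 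Thus the existence of the $x_t$ required by Theorem \ref{thm-1} is equivalent to this instance of $\mathbf P$ returning a positive answer, which completes the proof.

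I do not expect a genuine obstacle here, since the heavy lifting was done in Theorems \ref{thm-1} and \ref{thm:Rees} and Proposition \ref{action}; the main point requiring care is the precise bookkeeping in passing from path labels to rational subsets. Specifically, one must be attentive to two subtleties: first, that paths are allowed to be empty, so $\rho_s(\lambda,i;\mu,j)$ contains the identity pair $(1_{G_s},1_{G_{s+1}})$ exactly when $(\lambda,i)=(\mu,j)$, which is harmless but must be consistent with the convention on the empty path; and second, that the label of a path is computed using the twisted operation $\star$ on $G_s\times G_{s+1}^\partial$, so the substitution producing the rational expression for $\rho_s$ must be performed in $G_s\times G_{s+1}^\partial$ rather than in the ordinary direct product, with the dual group convention applied to the second coordinate. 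Once these conventions are fixed, the equivalence is a direct translation and the effective computability follows from the effectiveness statements already established (Kleene's Theorem providing the algorithmic conversion between automata and rational expressions, and Proposition \ref{action} providing the transition data of the contact automata from $\cE$).
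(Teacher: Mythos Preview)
Your proposal is correct and follows essentially the same route as the paper: define $\rho_s(\lambda,i;\mu,j)$ as the set of $\star$-labels of paths in $\mathcal{A}(D_s,D_{s+1})$ from $(\lambda,i)$ to $(\mu,j)$, argue it is rational and effectively computable, and then read off the equivalence with the problem $\mathbf{P}$ directly from Theorem~\ref{thm-1}. The paper is slightly more explicit about the intermediate step, introducing a fresh finite alphabet $\Gamma_s$ and a modified automaton $\mathcal{A}'(D_s,D_{s+1})$ (viewed as a finite state transducer from $E^\ast$ to $\Gamma_s^\ast$) so that $\rho_s$ is obtained as the $\psi_s$-image of an honest rational language $R_s(\lambda,i;\mu,j)\subseteq\Gamma_s^\ast$; but this is only a packaging difference.

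One point of your write-up does need correction: you say ``the rational expression for $\rho_s$ is obtained from that of $L(\lambda,i;\mu,j)$ by substituting each letter by its $\ell$-label''. This is not well-posed, because $\ell$ is a function on \emph{transitions}, not on letters of $E$, and the automaton is non-deterministic, so a single $e\in E$ may label several transitions with distinct group labels (and a single word $w\in L(\lambda,i;\mu,j)$ may be spelled by several paths with distinct $\ell$-values). The fix is exactly what the paper does implicitly and what your preceding sentence already gestures at: work over the alphabet $\Delta$ of transitions (or a bijective copy such as the paper's $\Gamma_s$), for which the set of paths from $(\lambda,i)$ to $(\mu,j)$ is a rational language and $\ell$ genuinely extends to a monoid homomorphism $\Delta^\ast\to G_s\times G_{s+1}^\partial$. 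With that adjustment, your argument is complete.
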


\begin{proof}
Let us start by modifying the concept of a contact automaton $\mathcal{A}(D,D')$ into a transition system $\mathcal{A}'(D,D')$
so that the new labelling function $\ell'$ maps from the set of transitions $\Delta$ into the free monoid $\Gamma^+$.
The alphabet $\Gamma$ consists of letters $a_{i\lambda},b_{i\lambda}$, $(i,\lambda)\in K\subseteq I\times\Lambda$ (where $K$ is
the set of all pairs for which $f_{i\lambda}$ is a generator of $G$, the maximal subgroup of $D$), and
$c_{i\lambda},d_{i\lambda}$, $(i,\lambda)\in K'\subseteq I'\times\Lambda'$ (with $K'$ being the set of all pairs
for which $f'_{i\lambda}$ is a generator of $G'$, the maximal subgroup of $D'$). So, for $\mathbf{t} =
((\lambda,i),e,(\mu,j)) \in \Delta$ we set
$$
\ell'(\mathbf{t}) = b_{i_0\lambda}a_{i_0\mu}d_{j\lambda_0'}c_{j\lambda_0'},
$$
where $i_0\in I$ is an arbitrary fixed point of $\sigma_e$ and $\lambda_0'\in\Lambda'$ is an arbitrary fixed point of $\tau'_e$.
The label of an empty path is the empty word. With these modifications, $\mathcal{A}'(D,D')$ is a finite state transducer from
$E^\ast$ to $\Gamma^\ast$ (see \cite{BeBook} for general background on finite state transducers and rational transductions).

Let $L_s(\lambda,i;\mu,j)\subseteq E^\ast$ denote the rational language of all words over $E$ that take the state $(\lambda,i)$
to the state $(\mu,j)$ in $\mathcal{A}'(D_s,D_{s+1})$ (or, equivalently, in $\mathcal{A}(D_s,D_{s+1})$). Furthermore, let
$R_s(\lambda,i;\mu,j)$ denote the language of all words over
$$
\Gamma_s=\{a_{r\nu}^{(s)},b_{r\nu}^{(s)},c_{r'\nu'}^{(s)},d_{r'\nu'}^{(s)}:\
r\in I^{(s)}, r'\in I^{(s+1)},\nu\in\Lambda^{(s)},\nu'\in\Lambda^{(s+1)}\}
$$
arising as $\ell'_s(\mathbf{p})$ such that
$\mathbf{p}$ is a path in $\mathcal{A}'(D_s,D_{s+1})$ starting at $(\lambda,i)$ and ending at $(\mu,j)$. Then $R_s(\lambda,i;\mu,j)$
is a rational transduction of $L_s(\lambda,i;\mu,j)$, so it is rational itself. Since the structure of both $\mathcal{A}(D,D')$ and
$\mathcal{A}'(D,D')$ depend solely on the maps $\sigma_e,\sigma'_e,\tau_e,\tau'_e$ which are already shown in Proposition \ref{action}
to be computable from $\cE$, there is an algorithm effectively constructing these automata from $\cE$, so by virtue of Kleene's Theorem
all the rational languages $L_s(\lambda,i;\mu,j)$ and $R_s(\lambda,i;\mu,j)$ are computable given $\cE$. Now let
$\psi_s:\Gamma_s^\ast\to G_s\times G_{s+1}^\partial$ be the (monoid) homomorphism uniquely extending the map
\begin{align*}
a_{r\nu}^{(s)} &\mapsto (f_{r\nu}^{(s)},1_{G_{s+1}}), \\
b_{r\nu}^{(s)} &\mapsto ((f_{r\nu}^{(s)})^{-1},1_{G_{s+1}}), \\
c_{r'\nu'}^{(s)} &\mapsto (1_{G_s},f_{r'\nu'}^{(s+1)}), \\
d_{r'\nu'}^{(s)} &\mapsto (1_{G_s},(f_{r'\nu'}^{(s+1)})^{-1}).
\end{align*}
Then $\rho_s(\lambda,i;\mu,j) = R_s(\lambda,i;\mu,j)\psi_s$ and we have that $(g,h)\in \rho_s(\lambda,i;\mu,j)$ if and only if
$\ell(\mathbf{p})=(g,h)$ for some path $\mathbf{p}$ in $\mathcal{A}(D_s,D_{s+1})$ from $(\lambda,i)$ to $(\mu,j)$. Hence,
$\rho_s(\lambda,i;\mu,j)$ is a rational subset of $G_s\times G_{s+1}^\partial$, and our required result now follows directly
from the previous theorem.
\end{proof}

\begin{rmk}
Due to the fact that $\cE$ is finite, for any $s$ and regular $\D$-classes $D_s,D_{s+1}$ there are only finitely many rational
subsets of $G_s\times G_{s+1}^\partial$ of the form $\rho_s(\lambda,i;\mu,j)$ simply because both $\Lambda_s$ and $I_{s+1}$ are
finite index sets. Therefore, given a $\D$-fingerprint $(D_1,\dots,D_m)$ of an equality $\ol{u}=\ol{v}$ to be decided in the
word problem of $\ig{\cE}$, there are only finitely many problems of the form
$$\mathbf{P}(G_1,\dots,G_m;\rho_1,\dots,\rho_{m-1})$$
involved. In each of these problems, all the parameters (the presentations of groups $G_i$ and the rational expressions denoting
$\rho_i$) are effectively computable from $\cE$ by the previous theorem and \cite[Theorem 3.10]{DGR}, so the reduction of the word
problem of $\ig{\cE}$ described in the previous theorem is effective.
\end{rmk}

\begin{rmk}\label{rem-12}
\begin{itemize}
\item[(i)] For $m=1$, deciding $\ol{u}=\ol{v}$ comes down to deciding equality of regular elements of $\ig{\cE}$ and this translates
to asking whether $(i,g,\lambda)=(j,h,\mu)$ (where $g,h$ are two elements of $G$, the maximal subgroup of $D$, represented as group
words over the generating set of $G$). Clearly, this holds if and only if $i=j$, $\lambda=\mu$ and $g=h$ holds in $G$. Therefore,
the `regular segment' of the word problem of $\ig{\cE}$ is indeed equivalent to the word problem of its maximal subgroups.
The same conclusion is reached in \cite[Theorem 3.10]{DGR}.
\item[(ii)] For $m=2$, the problem $\mathbf{P}(G_1,G_2;\rho)$ asks, on input $a_1,b_1\in G_1$, $a_2,b_2\in G_2$, whether
$(a_1^{-1}b_1,b_2a_2^{-1})\in\rho$ (note that no existentially bound variables are involved in this case); hence,
$\mathbf{P}(G_1,G_2;\rho)$ is simply the membership problem for $\rho\subseteq G_1\times G_2^\partial$. This is the basis of the
undecidablity result from \cite{DGR}. There, for a finitely presented group $G$ and its finitely generated subgroup $H$,
(a biordered set of) an idempotent semigroup $\mathcal{B}_{G,H}$ is constructed such that in a certain pair of $\D$-classes $D,D'$
we have that its maximal subgroups are both isomorphic to $G$, and for a specific choice of $i\in I$, $i'\in I'$, $\lambda\in
\Lambda$ and $\lambda'\in \Lambda'$ we have
$$
\rho=\rho(\lambda,i';\lambda,i') = \{(h^{-1},h):\ h\in H\}.
$$
It is well-known \cite{Mi} that the square of a free group of finite rank at least 2 has a finitely generated subgroup $H$ with
an undecidable membership problem. For such a choice of $G$ and $H$, we clearly have that the membership problem of $\rho$ (and
thus the problem $\mathbf{P}(G,G;\rho)$) must also be undecidable, and this is reflected in the undecidability of the word
problem of $\ig{\mathcal{B}_{G,H}}$ (see \cite[Proposition 8.1]{DGR}).
\end{itemize}
\end{rmk}

\subsection*{Summary of the reduction of the word problem of $\ig{\cE}$}

\begin{itemize}
\item[(1)] Given two words $u,v\in E^+$, compute their $\D$-fingerprints via finding a minimal r-factorisation for each of
them and then locating a seed for each of the factors, as described in Remark \ref{wp-eff}. If the $\D$-fingerprints fail
to coincide then we already know that $\ol{u}\neq\ol{v}$ in $\ig{\cE}$ by Theorem \ref{D-fing}, so we are done. Otherwise,
let $(D_1,\dots,D_m)$ be the computed joint $\D$-fingerprint of $u$ and $v$, and let $u=u_1\dots u_m$ and $v=v_1\dots v_m$
be the computed minimal r-factorisations, with $\ol{u_s},\ol{v_s}\in D_s$ for all $1\leq s\leq m$.
\item[(2)] Perform the `Rees matrix coordinatisation' of each of the $\D$-classes $D_s$ and transform the words $u_s,v_s$
into Rees matrix triples by using the formula from Theorem \ref{thm:Rees}. This turns the question of whether $\ol{u}=
\ol{u_1}\dots\ol{u_m} = \ol{v_1}\dots\ol{v_m}=\ol{v}$ holds into an equality of two products (each of length $m$) of triples
belonging to $D_1,\dots,D_m$, respectively.
\item[(3)] For each $s$, $1\leq s\leq m$, compute the partial maps $\sigma_e^{(s)},\tau_e^{(s)}$ (for any
suitable $e$) arising from the $\D$-class
$D_s$, as explained in Proposition \ref{action}. This allows us to effectively construct the contact automata $\mathcal{A}(D_s,D_{s+1})$
and thus, by applying Theorem \ref{prob-p} and the standard algorithm from the proof of Kleene's Theorem for analysis of
automata (see \cite{HUBook}), to construct a rational expression for each subset $\rho_s(\lambda,i;\mu,j)$ over
the generating set of the group $G_s\times G_{s+1}^\partial$.
\item[(4)] Finally, to decide whether $$(i_1,g_1,\lambda_1)\dots (i_m,g_m,\lambda_m) = (j_1,h_1,\mu_1)\dots (j_m,h_m,\mu_m)$$
(which is now the form in which we ask whether $\ol{u}=\ol{v}$), establish whether $i_1=j_1$ and $\lambda_m=\mu_m$ and then,
if it is recursively soluble, invoke the algorithmic problem $$\mathbf{P}(G_1,\dots,G_m;\rho_1(\lambda_1,i_2;\mu_1,j_2),\dots,
\rho_{m-1}(\lambda_{m-1},i_m;\mu_{m-1},j_m)).$$
By Theorem \ref{prob-p} the answer to the above problem determines whether $\ol{u}=\ol{v}$.
\end{itemize}

We have now completed all the stages of this reduction. In the next, final section we are going to present an application of
this reduction by proving that an important class of examples of biorders yields free idempotent generated semigroups $\ig{\cE}$
with a decidable word problem. These will be biordered sets $\cE$ such that all the non-maximal $\D$-classes of $\ig{\cE}$ have
finite maximal subgroups.

\section{Application I: Some soluble word problems}
\label{sec:appl1}

Let $S$ be a semigroup with finitely many idempotents. If $S$ has an identity element $1$ (so that it is a monoid) then its
$\J$-class $J_1$ contains a single $\D$-class and the corresponding principal factor $J_1^0$ is completely 0-simple; it follows
that $1$ is the only idempotent in $J_1$, and the latter is consequently the group of units of $S$. Furthermore, $J_1$ is the
unique maximal $\J$-class of $S$. If $S$ is, in addition, idempotent generated, then $J_1=D_1=\{1\}$.

We are going to say that a regular $\D$-class $D$ of $\ig{\cE}$ is \emph{maximal} if there is no regular $\D$-class $D'$
such that $D < D'$ in the $\J$-order, unless $\cE$ (and so $\ig{\cE}$) has an identity element $1$ when we allow
$D<\{1\}$, but there is no regular $\D$-class $D'$ such that $D < D' < \{1\}$. As remarked just after the relations \eqref{sing-sq}
and the definition of singular squares, if there are no singular squares in $D$ then the maximal subgroup(s) of $D$ must be
either free or trivial (which may, of course, be considered as a free group of rank 0). This is certainly the case with maximal $\D$-classes,
as the identity element 1 cannot singularise any square. The main result we aim to prove in this section is as follows.

\begin{thm}\label{thm:appl}
Let $\cE$ be a finite biordered set with the property that the maximal subgroups in all non-maximal $\D$-classes of $\ig{\cE}$
are finite. Then $\ig{\cE}$ has a decidable word problem.
\end{thm}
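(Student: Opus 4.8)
The plan is to build directly on the reduction culminating in Theorem~\ref{prob-p}: since the $\D$-fingerprint of a word, its Rees matrix coordinatisation (Theorem~\ref{thm:Rees}), the finite presentations of the maximal subgroups $G_k$, and the rational subsets $\rho_s$ are all effectively computable from $\cE$, it suffices to show that each of the finitely many problems $\mathbf{P}(G_1,\dots,G_m;\rho_1,\dots,\rho_{m-1})$ attached to a realisable $\D$-fingerprint $(D_1,\dots,D_m)$ is decidable. The first observation is that each $G_k$ is \emph{either finite or a finitely generated free group}: if $D_k$ is non-maximal this is the hypothesis, and if $D_k$ is maximal then $D_k$ contains no singular squares (as remarked after~\eqref{sing-sq}), so the presentation of $G_k$ has no relators of the form~\eqref{sing-sq} and reduces to a presentation of a free group on finitely many generators. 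Moreover, since the $\D$-classes of $\cE$, the $\J$-order on them, and hence the collection of maximal $\D$-classes are computable from $\cE$, we can tell which $G_k$ are free and which are finite, and in the latter case effectively construct their multiplication tables.

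The key structural point is: \emph{whenever $G_k$ is infinite (equivalently, $D_k$ is a maximal $\D$-class), the rational subsets $\rho_{k-1}$ and $\rho_k$ occurring in $\mathbf{P}$, evaluated at the states dictated by the fingerprint, are trivial}, i.e.\ equal to $\{(1_{}, 1_{})\}$ or to $\es$. To see this for $\rho_{k-1}$ (assuming $k>1$), recall from the remark after Proposition~\ref{action} that the transitions of the contact automaton $\mathcal{A}(D_{k-1},D_k)$ use idempotents $\ol{e}$ lying in $\D$-classes $\J$-above \emph{both} $D_{k-1}$ and $D_k$. If $D_{k-1}\neq D_k$, then since $D_k$ is maximal the only such $\ol{e}$ is the identity $\ol{1}$ (when it exists), which induces identity maps $\sigma_{\ol 1},\tau_{\ol 1}$ and trivial labels, so $\mathcal{A}(D_{k-1},D_k)$ has only identity transitions and $\rho_{k-1}$ is trivial. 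If $D_{k-1}=D_k=D$ is maximal, then adjacency of the two factors in a minimal r-factorisation forces their product to be non-regular, hence $p_{\lambda i}=0$ in $D^0\cong\mathcal{M}^0[G;I,\Lambda;P]$ for the relevant indices (an $\L$-index $\lambda$ of the left factor and an $\R$-index $i$ of the right factor); and a direct inspection of Proposition~\ref{action}, using that for an idempotent $e_{k'\nu}\in D$ the induced maps $\tau_{e_{k'\nu}},\sigma_{e_{k'\nu}}$ are constant on their domains, shows that no transition of $\mathcal{A}(D,D)$ can leave the state $(\lambda,i)$, so again $\rho_{k-1}$ is trivial. The argument for $\rho_k$ is symmetric.

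With this in hand, the decision procedure for $\mathbf{P}(G_1,\dots,G_m;\rho_1,\dots,\rho_{m-1})$ is straightforward. If $m=1$ we are simply deciding an equality in $G_1$, which has a solvable word problem whether $G_1$ is finite or free. For $m\geq 2$: by the structural point, any solution of $\mathbf{P}$ must set $x_t=1_{G_t}$ for every inner index $t$ with $G_t$ infinite (while if one of the adjacent $\rho$'s is empty there is no solution), so we may fix all such $x_t$ to be trivial and then enumerate the finitely many remaining tuples $(x_t)$, which now range only over \emph{finite} groups. For each tuple we check the $m-1$ conditions $(\cdot,\cdot)\in\rho_s$, each of which is decidable: if $\rho_s$ is trivial the check is immediate, and otherwise the structural point guarantees that at least one of $G_s,G_{s+1}$ is finite; if, say, $G_{s+1}$ is finite, then for each $h\in G_{s+1}$ the section $\rho_s^{(h)}=\{g\in G_s:(g,h)\in\rho_s\}$ is an effectively computable rational subset of $G_s$ (obtained by tracking the finite $G_{s+1}$-component along a finite automaton for $\rho_s$), so the test reduces to deciding $g\in\rho_s^{(h)}$ inside the finite or free group $G_s$, where the rational subset membership problem is decidable (trivially for finite groups, and classically — Benois — for finitely generated free groups); the case where $G_s$ is finite is symmetric, and when both are finite $\rho_s$ is itself a computable finite subset of $G_s\times G_{s+1}^\partial$. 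Answering positively precisely when some tuple passes all checks yields the algorithm, and hence, via Theorem~\ref{prob-p}, a decision procedure for the word problem of $\ig{\cE}$.

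The step I expect to be the main obstacle is the structural point of the second paragraph: it requires pinning down exactly how a maximal $\D$-class can occur inside a $\D$-fingerprint and verifying, through the explicit formulae of Proposition~\ref{action} (together with the vanishing of the relevant sandwich-matrix entry forced by minimality), that the contact automata adjacent to such a class are as degenerate as claimed. Everything after that — the elimination of the infinite-group variables, the enumeration over the finite groups, and the reduction of membership in a rational subset of $G_s\times G_{s+1}^\partial$ with one factor finite to rational subset membership in a single free or finite group — is routine automata-theoretic bookkeeping, modulo the classical decidability of the rational subset membership problem for finitely generated free groups.
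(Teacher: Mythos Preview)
Your overall strategy --- reduce to $\mathbf{P}(G_1,\dots,G_m;\rho_1,\dots,\rho_{m-1})$, observe that each $G_k$ is finite or free, and split the variables according to whether $G_k$ is infinite --- is exactly the paper's. The gap is in your structural point: it is \emph{not} true that $\rho_{k-1}$ (or $\rho_k$) must be trivial whenever $D_k$ alone is maximal. The remark after Proposition~\ref{action} says the idempotent $\ol e$ labelling a transition of $\mathcal A(D_{k-1},D_k)$ lies in a $\D$-class \emph{non-strictly} $\J$-above both $D_{k-1}$ and $D_k$; so if $D_{k-1}<D_k$ (which can certainly occur in a $\D$-fingerprint), every idempotent $e\in D_k$ qualifies, and your own computation in the $D_{k-1}=D_k$ case shows that such $e$ induce transitions with non-identity $G_k$-labels $f_{j\nu}^{}f_{i\nu}^{-1}$. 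The only triviality one can actually establish is when \emph{both} adjacent $\D$-classes are maximal (this is Lemma~\ref{II-L2}). Consequently the free-group variable $x_k$ is \emph{not} forced to $1$ in general, and fixing all such $x_k$ to $1$ before enumerating the finite-group variables discards solutions.

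The repair --- and what the paper does --- is to reverse the order of elimination: first enumerate the finitely many assignments $\xi_s$ to the \emph{finite}-group variables $x_s$ ($s\notin X$), and for each assignment analyse what the conditions say about the remaining free-group variables $x_k$ ($k\in X$). Each such $x_k$ now appears in at most two conditions, and in each the other coordinate is either already determined (when the neighbouring class is also maximal, via the genuine triviality above) or lies in a \emph{finite} group; in the latter case the section $\{g\in G_k:(\xi,g)\in\rho_{k-1}\}$ (or its first-coordinate analogue) is an effectively computable rational subset of the free group $G_k$ --- this is precisely where one needs a result like Lemma~\ref{II-L3}. The two constraints on $x_k$ then reduce to membership and emptiness-of-intersection for rational subsets of a free group, both decidable by Benois. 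Your final paragraph already contains this section idea, but you invoke it only after wrongly pinning the free variables to $1$; moving that step to the front is what makes the argument go through.
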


For this goal, we need a bit more preparatory work.

\begin{lem}\label{II-L1}
Let $D$ be a regular $\D$-class of $\ig{\cE}$, and let $\mathbf{x},\mathbf{y}\in D$
be such that $\mathbf{x}=(i,g,\lambda)$ and $\mathbf{y}=(j,h,\mu)$. Then $\mathbf{xy}$ is
a regular element of $\ig{\cE}$ if and only if $p_{\lambda j}\neq 0$ (i.e.\ if and only if $L_{\mathbf{x}}\cap R_{\mathbf{y}}$
contains an idempotent).
\end{lem}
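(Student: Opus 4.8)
The plan is to work inside the completely $0$-simple principal factor $D^0\cong\mathcal{M}^0[G;I,\Lambda;P]$, which we may do since by Lemma \ref{lem:DJ} the $\D$-class $D$ coincides with its $\J$-class and so the product $\mathbf{xy}$, computed in $\ig{\cE}$, lies in $D$ if and only if it is nonzero when computed in $D^0$. First I would invoke the Rees matrix multiplication rule recalled in Section \ref{sec:prelim}: for $\mathbf{x}=(i,g,\lambda)$ and $\mathbf{y}=(j,h,\mu)$ we have $\mathbf{xy}=(i,gp_{\lambda j}h,\mu)$ if $p_{\lambda j}\neq 0$, and $\mathbf{xy}=0$ otherwise. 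Thus, if $p_{\lambda j}\neq 0$, then $\mathbf{xy}$ is a nonzero element of $D^0$, hence an element of the regular $\D$-class $D$ of $\ig{\cE}$, and in particular $\mathbf{xy}$ is a regular element. Conversely, if $p_{\lambda j}=0$, then $\mathbf{xy}=0$ in $D^0$, which means that the product $\ol{xy}$ of the representing words, taken in $\ig{\cE}$, does not land back in $D$; I would then need to argue that it cannot be regular at all.

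For the converse direction, the key point is that $J_{\mathbf{xy}}\le J_{\mathbf{x}}=J_{\mathbf{y}}=J_D$ in the $\J$-order (as $\mathbf{xy}$ lies in the two-sided ideal generated by $\mathbf{x}$), so $\mathbf{xy}$ is either in $D$ or strictly $\J$-below $D$. If $p_{\lambda j}=0$ then the first alternative is excluded by the multiplication rule, so $\mathbf{xy}\in J$ for some $\J$-class $J<J_D$. Suppose for contradiction that $\mathbf{xy}$ is regular. Then, by Remark \ref{the-rem}(ii) applied to any word $w=uev$ with $\ol{w}=\mathbf{xy}$ (choosing $u$ a word for $\mathbf{x}$ and $ev$ a word for $\mathbf{y}$, or rather decomposing via a seed), $\ol{e}\;\D\;\ol{w}=\mathbf{xy}$ would force $\mathbf{x},\mathbf{y},\mathbf{xy}$ all into the same regular $\D$-class with completely $0$-simple principal factor — but that class is $D$, contradicting $\mathbf{xy}\notin D$. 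More directly: a regular element $\D$-below $D$ lies in some regular $\D$-class $D''<D$, yet the chain of inequalities $J_D=J_{\mathbf{x}}\ge J_{\mathbf{xy}}$ together with the fact (Lemma \ref{lem:DJ} and the structure of $\mathcal{M}^0$) that $\mathbf{xy}$ being regular and in the ideal would make it $\D$-related to $\mathbf{x}$ inside $D^0$ — which is false when $p_{\lambda j}=0$ — gives the contradiction. I would phrase this cleanly using the standard fact that in a completely $0$-simple semigroup $\mathcal{M}^0[G;I,\Lambda;P]$ the product of two nonzero elements is nonzero precisely when the relevant sandwich entry is nonzero, and that $0$ is the only non-regular element only if... — actually, here $\mathbf{xy}$ landing in a strictly lower $\J$-class is exactly the statement that $\ol{xy}$ is not regular, because a regular element in the same principal ideal generated by a regular element of a completely $0$-simple principal factor must stay in that factor.

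For the parenthetical equivalence, I would simply recall the standard characterisation in Rees matrix semigroups: $p_{\lambda j}\neq 0$ if and only if $L_\lambda\cap R_j$ is a group $\H$-class, i.e.\ contains an idempotent; translating back via the coordinatisation, $L_\mathbf{x}$ is the $\L$-class indexed by $\lambda$ and $R_\mathbf{y}$ is the $\R$-class indexed by $j$, so $p_{\lambda j}\neq 0 \Leftrightarrow L_\mathbf{x}\cap R_\mathbf{y}$ contains an idempotent. The main obstacle, such as it is, is the careful bookkeeping in the converse direction to make sure that ``$\mathbf{xy}$ computed in $\ig{\cE}$'' and ``$\mathbf{xy}$ computed in $D^0$'' agree — but this is precisely guaranteed by Lemma \ref{lem:DJ} (the principal factor of $D$ is $D^0$) together with Remark \ref{the-rem}(ii), so there is no real difficulty; the lemma is essentially an immediate consequence of the structure theory of completely $0$-simple semigroups once the setup is in place.
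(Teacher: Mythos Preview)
Your forward direction is fine and matches the paper. The converse, however, has a genuine gap: you are treating the implication ``$p_{\lambda j}=0\Rightarrow\mathbf{xy}$ not regular'' as if it followed from the structure theory of completely $0$-simple semigroups alone. It does not. In an arbitrary semigroup (take $\mathcal{T}_n$, say), two elements of a regular $\D$-class can multiply to a regular element lying in a strictly lower $\D$-class; nothing about $D^0$ being completely $0$-simple prevents this. Your sentence ``a regular element in the same principal ideal generated by a regular element of a completely $0$-simple principal factor must stay in that factor'' is simply false, and your invocation of Remark~\ref{the-rem}(ii) is in the wrong direction: that remark assumes $\ol{e}\;\D\;\ol{w}$ and deduces regularity, whereas you need to produce such an $e$.

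The missing ingredient is Fitz-Gerald (Lemma~\ref{lem:FG}): write $\mathbf{x}=\ol{w_1}$ and $\mathbf{y}=\ol{w_2}$ with $w_1,w_2\in E_D^+$, i.e.\ words all of whose letters lie in $D$. Now assume $\mathbf{xy}=\ol{w_1w_2}$ is regular. The paper argues syntactically that no rewriting sequence from $w_1w_2$ can introduce a letter from a $\D$-class strictly below $D$, contradicting the existence of a seed for $\mathbf{xy}$ in $D'<D$. In fact one can shortcut this: Theorem~\ref{thm:DGR}(ii) applies to the specific word $w_1w_2$, so $w_1w_2$ itself contains a seed $e$ with $\ol{e}\;\D\;\mathbf{xy}$; but every letter of $w_1w_2$ lies in $E_D$, forcing $\mathbf{xy}\in D$, a contradiction. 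Either way, the argument is genuinely specific to $\ig{\cE}$ and cannot be replaced by abstract Rees matrix considerations.
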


\begin{proof}
If $p_{\lambda j}\neq 0$ then we simply have
$$
\mathbf{xy} = (i,g,\lambda)(j,h,\mu) = (i,gp_{j\lambda}h,\mu) = (i,gf_{j\lambda}^{-1}h,\mu)\in D,
$$
showing that $\mathbf{xy}$ is regular.

Conversely, assume that $p_{\lambda j}=0$, so that $L_{\mathbf{x}}\cap R_{\mathbf{y}}$ contains no idempotent. Then it
immediately follows that $\mathbf{xy}\not\in D$, since in the corresponding principal factor $D^0$ we have $\mathbf{xy}=0$.
More precisely, we have that $\mathbf{xy}\in D'$ for some $\D$-class $D'$ that is $\J$-below $D$; so, if we assume (seeking
a contradiction) that $\mathbf{xy}$ is a regular element of $\ig{\cE}$, it follows that $D'$ is a regular $\D$-class.
Hence, by Theorem \ref{thm:DGR} it follows that $\mathbf{xy}=\ol{uev}$ for some $e\in E$ and $u,v\in E^\ast$ such that
$\ol{e}\in D'$. On the other hand, by Lemma \ref{lem:FG} both $\mathbf{x}$ and $\mathbf{y}$ can be represented
as products of idempotents from $D$: $\mathbf{x}=\ol{w_1}$ and $\mathbf{y}=\ol{w_2}$ for some $w_1,w_2\in E_D^+$. Thus
we get that $\ol{w_1w_2}=\ol{uev}$; in particular, there are words $u_0,u_1,\dots,u_n$ such that
$$
w_1w_2=u_0 \equiv u_1\equiv\dots\equiv u_n=uev.
$$
However, this is not possible, as each rewriting rule stemming from the presentation of $\ig{\cE}$ either replaces a
two-letter word $ef$ consisting of letters corresponding to idempotents $\ol{e},\ol{f}$ from $\J$-comparable $\D$-classes
by a letter $g$ such that $\ol{g}$ is $\D$-related to one of the previous idempotents from the `lower' $\D$-class, or
replaces a letter $g$ with a word $ef$ such that one of $\ol{e},\ol{f}$ is $\D$-related to $\ol{g}$ and the other is
from a $\D$-class that is $\J$-above $D_{\ol{g}}$. Hence, starting from $u_0\in E_D^+$ one can never obtain a word
that contains a letter $e$ such that $\ol{e}\in D'$, a contradiction.

(By the way, this shows that $w_1w_2$ is a minimal r-factorisation, so that the $\D$-fingerprint of $\mathbf{xy}$
is $(D,D)$.)
\end{proof}

\begin{lem}\label{II-L2}
Let $D,D'$ be distinct maximal regular $\D$-classes of $\ig{\cE}$, and let $G,G'$ be maximal subgroups of $D$ and $D'$, 
respectively.
\begin{itemize}
\item[(i)] The contact automaton $\mathcal{A}(D,D')$ is either empty or contains only loops around each vertex labelled
by $(1_G,1_{G'})$ if $\cE$ has an identity element $1$.
\item[(ii)] The restriction of the contact automaton $\mathcal{A}(D,D)$ (of $D$ with itself) to the vertices $(\lambda,j)$
such that $p_{\lambda j}=0$ is either empty or contains only loops around each vertex labelled by $(1_G,1_G)$ if $\cE$ has an
identity element $1$.
\end{itemize}
\end{lem}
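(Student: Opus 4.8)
The plan is to trace every transition of the relevant contact automaton back to the idempotent $\ol{e}\in E$ that labels it, and then to use maximality of the $\D$-classes to pin down where $\ol{e}$ can possibly live. The key observation is that a transition $((\lambda,i),e,(\mu,j))\in\Delta$ of $\mathcal{A}(D_1,D_2)$ always forces both $\tau_e^{(1)}$ and $\sigma_e^{(2)}$ to be non-empty partial maps (this is immediate from the two clauses defining $\Delta$), so by the remark following Proposition \ref{action} we have $D_{\ol{e}}\ge D_1$ and $D_{\ol{e}}\ge D_2$ in the $\J$-order; moreover $D_{\ol{e}}$ is a regular $\D$-class, as it contains the idempotent $\ol{e}$. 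Recall also that the regular $\D$-classes lying $\J$-above a maximal one are precisely that class itself and, if $\cE$ has an identity, the trivial class $\{1\}$.

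To prove (i), suppose $\mathcal{A}(D,D')$ has a transition, labelled by some $e\in E$. By the observation above, $D_{\ol{e}}$ is a regular $\D$-class lying $\J$-above both $D$ and $D'$, so $D_{\ol{e}}\in\{D,\{1\}\}\cap\{D',\{1\}\}$; since $D\ne D'$, this intersection is empty unless $\cE$ has an identity, in which case $D_{\ol{e}}=\{1\}$, i.e.\ $\ol{e}=1$. Hence $\mathcal{A}(D,D')$ is empty when $\cE$ has no identity. When it does have one, $\ol{e}=1$ acts as the identity on every $\D$-class, so $\sigma_1^{(k)}=\id$ and $\tau_1^{(k)}=\id$; inspecting the definition of $\Delta$ then shows that the only transitions are the loops $((\lambda,i),1,(\lambda,i))$, and substituting $\mu=\lambda$, $j=i$ into the formula for $\ell$ makes both coordinates cancel to the identity, giving label $(1_G,1_{G'})$.

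For (ii) I would run the same argument with $D_1=D_2=D$, using in addition the restriction $p_{\lambda i}=0=p_{\mu j}$ at the endpoints. Take a transition $((\lambda,i),e,(\mu,j))$ of $\mathcal{A}(D,D)$ with $p_{\lambda i}=p_{\mu j}=0$; as before $D_{\ol{e}}$ is a regular $\D$-class $\J$-above $D$, hence, $D$ being maximal, either $\ol{e}=1$ — possible only when $\cE$ has an identity, and then the transition is a loop labelled $(1_G,1_G)$ exactly as in (i) — or $\ol{e}$ is an idempotent of $D$, say $\ol{e}=(a,f_{a\beta},\beta)$ with $p_{\beta a}\ne 0$ in Rees coordinates. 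In the latter case a short Rees-matrix computation shows that $\sigma_e$ sends every index $i'$ with $p_{\beta i'}\ne 0$ to $a$, and $\tau_e$ sends every index $\nu$ with $p_{\nu a}\ne 0$ to $\beta$. Feeding these into the two clauses defining a transition of $\mathcal{A}(D,D)$, the first clause forces $\lambda=\beta$, $j=a$ and $p_{\mu a}\ne 0$, hence $p_{\mu j}\ne 0$; the second forces $\mu=\beta$, $i=a$ and $p_{\lambda a}\ne 0$, hence $p_{\lambda i}\ne 0$. Both contradict the hypothesis, so no transition of the restricted automaton comes from an idempotent of $D$, and the only transitions left are the trivially labelled loops present when $\cE$ has an identity.

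The routine ingredients are the Rees-matrix evaluations of $\sigma_e,\tau_e$ for an idempotent of $D$ and the verification that a loop carries the label $(1_G,1_{G'})$ (resp.\ $(1_G,1_G)$), both of which come down to cancelling a generator against its inverse in the relevant maximal subgroup. The only point that needs genuine care — the main obstacle, such as it is — is keeping the index conventions straight in the case analysis for (ii), in particular correctly identifying, in each of the two clauses defining a transition, which of $p_{\lambda i}$, $p_{\mu j}$ is forced to be nonzero; once that bookkeeping is in place, everything falls out.
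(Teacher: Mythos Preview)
Your proposal is correct and follows essentially the same approach as the paper: in both parts you trace a transition back to its labelling idempotent $e$, use maximality to force $\ol{e}\in D$ (or $\ol{e}=1$), and in (ii) compute the Rees-matrix action of an idempotent of $D$ to derive a contradiction with the $p_{\lambda i}=0$ hypothesis. The only minor difference is that in (ii) you invoke the restriction at \emph{both} endpoints (using $p_{\mu j}=0$ for the first clause and $p_{\lambda i}=0$ for the second), whereas the paper's argument uses only the starting vertex; this makes the paper's version marginally stronger (no non-trivial transition leaves the restricted vertex set), but for the lemma as stated your version is entirely adequate.
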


\begin{proof}
(i) Assume that $\mathcal{A}(D,D')$ contains a transition $((\lambda_1,j_1'),e,(\lambda_2,j_2'))$ such that $e\neq 1$, $\lambda_1,
\lambda_2\in \Lambda$ and $j_1',j_2'\in I'$. Then, in particular, we must have either $(i,g,\lambda_1)\ol{e}=(i,h,\lambda_2)$, or
$(i,g,\lambda_1)=(i,h,\lambda_2)\ol{e}$ for some $i\in I$, $g,h\in G$; either way, $D\leq D_{\ol{e}}$. Analogously, we arrive
at $D'\leq D_{\ol{e}}$, which is impossible since both $D,D'$ are maximal and $e$ is non-identity, a contradiction.

(ii) Assume that $p_{\lambda j}=0$ while there is a transition $((\lambda,j),e,(\lambda',j'))$ in the automaton $\mathcal{A}(D,D)$
for some pair $(\lambda',j')$ such that $e\neq 1$. Then we must have $\ol{e}\in D$ (as $D\leq D_{\ol{e}}$ and $D$ is maximal),
and either $(i,g,\lambda)=(i,g',\lambda')\ol{e}$ and $\ol{e}(j,h,\mu)=(j',h',\mu)$ for some $i,\mu$ and group elements $g,g',h,h'\in G$,
or, conversely, $(i,g,\lambda)\ol{e}=(i,g',\lambda')$ and $(j,h,\mu)=\ol{e}(j',h',\mu)$. This means that if
$$
\ol{e} = (k,f_{k\nu},\nu)
$$
we have $\nu=\lambda$ in the first case and $k=j$ in the second; in either case we reach a contradiction because of $p_{\lambda j}=0$.
\end{proof}

\begin{lem}\label{II-L3}
Let $G$ be a finite group and $F$ a free group of finite rank. If $\rho$ is a rational subset of
$G\times F^\partial$ then for each $g\in G$, the set
$$
g\rho = \{w\in F:\ (g,w)\in\rho\}
$$
is a rational subset of $F^\partial$ (and of $F$ as well).
\end{lem}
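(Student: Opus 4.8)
The plan is to turn a finite automaton recognising $\rho$ into a finite automaton recognising $g\rho$, using the finiteness of $G$ in an essential way. I would begin with the observation that for \emph{any} monoid $M$ one has $\Rat(M)=\Rat(M^\partial)$ as families of subsets of the common underlying set: finite sets, unions, products (note $A\cdot_\partial B=B\cdot A$) and Kleene stars are all the same in $M$ and in $M^\partial$. Hence it is enough to show that $g\rho$ is a rational subset of $F^\partial$, and the assertion about $F$ then follows for free.

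Fix a finite free basis $X$ of $F$, so $X^{\pm1}$ is a finite monoid generating set of $F^\partial$, and set $\Sigma=(G\times\{1\})\cup(\{1\}\times X^{\pm1})$, a finite generating set of $K:=G\times F^\partial$. Let $\psi\colon\Sigma^\ast\to K$ be the canonical homomorphism and write $\psi(\ell)=(\psi_1(\ell),\psi_2(\ell))$; here $\psi_1\colon\Sigma^\ast\to G$ and $\psi_2\colon\Sigma^\ast\to F^\partial$ are themselves homomorphisms, and $\psi_2$ maps each $\ell\in\Sigma$ into $X^{\pm1}\cup\{\varepsilon\}$. By the description of rational subsets recalled in Section~\ref{sec:prelim}, there is a rational language $L\subseteq\Sigma^\ast$ with $L\psi=\rho$; fix a finite automaton $\mathcal A=(Q,\Sigma,\delta,q_0,T)$ recognising $L$.

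Next I would define a finite automaton $\mathcal B$ over the alphabet $X^{\pm1}$ with state set $Q\times G$, initial state $(q_0,1_G)$, terminal states $T\times\{g\}$, and, for every transition $(p,\ell,q)$ of $\mathcal A$ and every $a\in G$, a transition from $(p,a)$ to $(q,a\psi_1(\ell))$ labelled by $\psi_2(\ell)\in X^{\pm1}\cup\{\varepsilon\}$. Thus the $G$-coordinate of a state acts as an accumulator for the product of the $\psi_1$-values read so far — and this is the sole place where $|G|<\infty$ is used — while the labels actually read record the $\psi_2$-values. A routine induction on path length shows that $\mathcal B$ admits an accepting path along which the word read evaluates in $F^\partial$ to $w$ precisely when there is $\widehat w\in L$ with $\psi_1(\widehat w)=g$ and $\psi_2(\widehat w)=w$, that is, precisely when $(g,w)\in\rho$. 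Therefore, letting $L'\subseteq(X^{\pm1})^\ast$ be the language of words read along accepting paths of $\mathcal B$ — rational by Kleene's Theorem \cite{HUBook} — and $\psi'\colon(X^{\pm1})^\ast\to F^\partial$ the canonical homomorphism, we obtain $L'\psi'=g\rho$, so $g\rho\in\Rat(F^\partial)$. The only delicate point is the bookkeeping in that last induction, namely checking that the product in $F^\partial$ of the labels along a path equals the second coordinate of $\psi(\widehat w)$; this is immediate from $\psi_2$ being a homomorphism, so I do not expect any real obstacle. (I note in passing that freeness of $F$ is never used — any finitely generated group would do in its place.)
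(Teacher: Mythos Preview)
Your proof is correct and takes a genuinely different route from the paper's. The paper observes that $G\times F^\partial$ is virtually free (with free normal subgroup $\{1_G\}\times F^\partial$ of finite index) and invokes a structural result of Grunschlag \cite{Gr} (see also \cite{BS,Si}) to the effect that any rational subset of a finitely generated virtually free group decomposes as a finite union $\rho=\bigcup_{g\in G}R_g(g,1_F)$ with each $R_g$ rational in the free normal subgroup; then $g\rho$ is simply the image of $R_g$ under the obvious isomorphism. By contrast, you give a direct, self-contained automaton construction: the product $Q\times G$ with the $G$-coordinate used as an accumulator is a standard and clean trick, and it sidesteps the need to cite external results. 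Your approach is more elementary and, as you rightly note, more general (freeness of $F$ plays no role), and it makes the effective computability of $g\rho$ from $\rho$ transparent --- a point the paper has to extract separately from \cite{Si} just after the lemma. The paper's approach, on the other hand, situates the statement within the broader theory of rational subsets of virtually free groups, and gives a slightly more conceptual explanation of \emph{why} the slices are rational (they are essentially the coset pieces in a canonical decomposition). Both arguments are short; yours is arguably the more natural one for the purposes at hand.
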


\begin{proof}
First of all, any group $H$ is isomorphic to its dual $H^\partial$. So, once we prove that $g\rho$ is a rational
subset of $F^\partial$, the analogous assertion about $F$ follows immediately.

Also, $F^\partial$ is now a free group, so $G\times F^\partial$ is a virtually free group. Therefore,
we can use the result of Grunschlag \cite{Gr} (see also \cite[Subsection 3.4]{BS} and \cite{Si}) describing rational subsets
of finitely generated virtually free groups. Namely, if we start with such a virtually free group and
consider its normal free subgroup of finite rank and finite index -- in our case, $\{1_G\}\times F^\partial$ --
and fix a set of right coset representatives -- which is $\{(g,1_F):\ g\in G\}$ in our case -- then any
rational subset $\rho$ of $G\times F^\partial$ can be written as
$$
\rho = \bigcup_{g\in G} R_g(g,1_F)
$$
for some rational subsets $R_g$ of $\{1_G\}\times F^\partial$. Since $g\rho$ is simply the image of $R_g$
under the natural isomorphism $\{1_G\}\times F^\partial \to F^\partial$, the lemma follows.
\end{proof}

A dual result also holds for rational subsets $\rho$ of direct products $F\times G^\partial$ where $F$ is free
and $G$ is finite. For a rational subset $\rho$ of $F\times G$ we shall denote by $\rho g$ the set
$\{w\in F:\ (w,g)\in\rho\}$ (which now also follows to be a rational subset of $F$ and $F^\partial$).
It is not difficult to extract from the proof of Proposition 4.1 in \cite{Si} that given $g\in G$ and a 
rational subset $\rho$ of $G\times F^\partial$ (resp.\ $F\times G^\partial$), the rational subset $g\rho$
(resp.\ $\rho g$) of $F$ is effectively computable.

\begin{proof}[Proof of Theorem \ref{thm:appl}]
Following steps (1)--(4) from the summary from the previous section, our task reduces to deciding equalities
of the form
\begin{equation}\label{eqdec}
(i_1,g_1,\lambda_1)\dots (i_m,g_m,\lambda_m) = (j_1,h_1,\mu_1)\dots (j_m,h_m,\mu_m)
\end{equation}
in $\ig{\cE}$, where the elements in the above products form irreducible r-sequences, both with $\D$-fingerprint
$(D_1,\dots,D_m)$. Note that if $\cE$ contains an identity element, we can safely assume that it does not appear
in the above equality. By Theorem \ref{prob-p}, it suffices to  check whether $i_1=j_1$ and $\lambda_m=\mu_m$,
and then invoke the problem
$$
\mathbf{P}(G_1,\dots,G_m;\rho_1(\lambda_1,i_2;\mu_1,j_2),\dots,\rho_{m-1}(\lambda_{m-1},i_m;\mu_{m-1},j_m)),
$$
for input $g_k,h_k\in G_k$, where $G_k$ is the maximal subgroup of $D_k$, $1\leq k\leq m$. Note that these groups
are either finite or free (of finite rank); the latter happens only if $D_k$ is a maximal $\D$-class of $\ig{\cE}$.

Note that Lemmas \ref{II-L1} and \ref{II-L2}(ii) imply that whenever $D_k=D_{k+1}=D$ is a maximal $\D$-class (with
a free maximal subgroup $G_k=G_{k+1}$), we have $p_{\lambda_k i_{k+1}}=p_{\mu_k j_{k+1}}=0$ in the
principal factor of $D$ and so
$$
\rho_k(\lambda_k,i_{k+1};\mu_k,j_{k+1}) = \left\{
\begin{array}{cl}
\{(1_{G_k},1_{G_{k+1}})\} & \text{if }(\lambda_k,i_{k+1})=(\mu_k,j_{k+1}),\\[1.5mm]
\es & \text{otherwise.}
\end{array}
\right.
$$
Moreover, by Lemma \ref{II-L2}(i) it immediately follows that the same conclusion holds when $D_k$ and $D_{k+1}$
are two distinct maximal $\D$-classes.

Consider the set $X\subseteq [1,m]$ of all indices $k$ such that $G_k$ is \emph{not} finite, i.e.\
such that $D_k$ is a maximal $\D$-class with a nontrivial free maximal subgroup $G_k$. 
So, for \eqref{eqdec} to hold, we must have $\lambda_k=\mu_k$ and $i_{k+1}=j_{k+1}$ whenever $k,k+1\in X$, for otherwise
$$\mathbf{P}(G_1,\dots,G_m;\rho_1(\lambda_1,i_2;\mu_1,j_2),\dots,\rho_{m-1}(\lambda_{m-1},i_m;\mu_{m-1},j_m))$$
would fail, as it would contain at least one empty relation. Hence, our theorem will be proved
as soon as we prove that the problem
$\mathbf{P}(G_1,\dots,G_m;\rho_1,\dots,\rho_{m-1})$ is decidable whenever $\rho_k=\{(1_{G_k},1_{G_{k+1}})\}$
for all $k$ such that $k,k+1\in X$. To keep in line with the original definition of this problem
(and step away from the particular situation in which it is invoked), we assume that the input consists of 
elements $a_k,b_k\in G_k$, $1\leq k\leq m$.

Consider first the case $m=1$. As already noted in Remark \ref{rem-12}(i), the problem we are considering boils down
to the word problem of the maximal subgroup $G$ of a single $\D$-class. As $G$ is either free or finite, it has a decidable
word problem.

Now let $m=2$, so that the $\D$-fingerprint we are considering consists of a pair of $\D$-classes $(D_1,D_2)$. As noted in
Remark \ref{rem-12}(ii), we are faced with the task of deciding whether $(a^{-1}b_1,b_2a_2^{-1})\in\rho_1$. If both
$D_1,D_2$ are maximal, then $\rho_1=\{(1_{G_1},1_{G_2})\}$, so our problem is equivalent to $a_1=b_1$ and $a_2=b_2$.
On the other hand, if both $D_1,D_2$ are not maximal, then their maximal subgroups are finite, and so is the relation
$\rho_1$. Thus our problem is again decidable. Finally assume that $D_1$ is maximal, while $D_2$ is not (the converse
case is analogous). Then $G_1$ is free and $G_2$ is finite. The condition $(a^{-1}b_1,b_2a_2^{-1})\in\rho_1$ can be
rewritten as $a^{-1}b_1\in\rho_1(b_2a_2^{-1})$. By the remarks following Lemma \ref{II-L3}, $\rho_1(b_2a_2^{-1})$ is 
an effectively computable rational subset of the free group $G_1$, so the latter question is decidable by Benois' Theorem.

Therefore, in the remainder of the proof we may assume that $m\geq 3$. Let $x_s\mapsto \xi_s$ be an arbitrary assignment 
of values from the finite groups $G_s$ to variables $x_s$ such that $s\not\in X$; we will call this simply an \emph{assignment}. 
Clearly, there are only finitely many assignments. Define the problem
$$\mathbf{P}(G_1,\dots,G_m;\rho_1,\dots,\rho_{m-1};\xi_s)_{s\not\in X}$$ 
to be the original decision problem $\mathbf{P}(G_1,\dots,G_m;\rho_1,\dots,\rho_{m-1})$ but with the value of each
variable $x_s$ such that $s\not\in X$ fixed at $\xi_s$. It is now immediate that $\mathbf{P}(G_1,\dots,G_m;\rho_1,\dots,\rho_{m-1})$
yields a positive answer if and only if at least one of the finitely many problems $\mathbf{P}(G_1,\dots,G_m;\rho_1,\dots,\rho_{m-1};
\xi_s)_{s\not\in X}$ yields a positive answer. Our aim is thus to show that each of the latter problems is decidable.

To this end, for each $k\in X$, $2\leq k\leq m-1$, we examine the conditions imposed on the variable $x_k$. If $x_k$ appears within
the second coordinate of a condition from $\mathbf{P}(G_1,\dots,G_m;\rho_1,\dots,\rho_{m-1};\xi_s)_{s\not\in X}$, the possibilities 
are as follows:
\begin{itemize}
\item $(a_1^{-1}b_1,x_2)\in\rho_1$ if $k=2$;
\item $(a_{k-1}^{-1}x_{k-1}^{-1}b_{k-1},x_k)\in \rho_{k-1}$ if $k>2$ and $k-1\in X$;
\item $(a_{k-1}^{-1}\xi_{k-1}^{-1}b_{k-1},x_k)\in \rho_{k-1}$ if $k>2$ and $k-1\not\in X$.
\end{itemize}
These conditions yield, respectively, that 
\begin{itemize}
\item $a_1=b_1$ and $x_2=1_{G_2}$ if $1\in X$, and otherwise $x_2\in(a_1^{-1}b_1)\rho_1$;
\item $x_k=1_{G_k}$;
\item $x_k\in (a_{k-1}^{-1}\xi_{k-1}^{-1}b_{k-1})\rho_{k-1}$.
\end{itemize}
On the other hand, for conditions where $x_k$ appears within the first coordinate we have the following possibilities:
\begin{itemize}
\item $(a_k^{-1}x_k^{-1}b_k,x_{k+1})\in \rho_k$ if $k+1<m$ and $k+1\in X$;
\item $(a_k^{-1}x_k^{-1}b_k,\xi_{k+1})\in \rho_k$ if $k+1<m$ and $k+1\not\in X$;
\item $(a_k^{-1}x_k^{-1}b_k,b_ma_m^{-1})\in\rho_k$ if $k+1=m$.
\end{itemize}
These conditions yield, respectively, that 
\begin{itemize}
\item $x_k=b_ka_k^{-1}$;
\item $x_k\in b_k(\rho_k\xi_{k+1})^{-1}a_k^{-1}$;
\item $x_k=b_ka_k^{-1}$ and $a_m=b_m$ if $m\in X$, and otherwise $$x_k\in b_k(\rho_k(b_ma_m^{-1}))^{-1}a_k^{-1}.$$
\end{itemize}

In summary, a problem of the form $\mathbf{P}(G_1,\dots,G_m;\rho_1,\dots,\rho_{m-1};\xi_s)_{s\not\in X}$ is 
equivalent to checking a certain collection of conditions of the following four types (where for convenience we set
$\xi_1=1$ and $\xi_m=b_ma_m^{-1}$):
\begin{itemize}
\item $a_k=b_k$ for certain values of $k$ (determined by the set $X$), which is trivially decidable (since $G_k$ is either finite or free);
\item $b_ka_k^{-1}\in (a_{k-1}^{-1}\xi_{k-1}^{-1}b_{k-1})\rho_{k-1}$ for certain values of $k$, where the right-hand side is an 
effectively computable rational subset of the free group $G_k$, and thus conditions of this type are decidable by Benois' Theorem;
\item $a_k^{-1}b_k\in\rho_k\xi_{k+1}$ for certain values of $k$ -- again, the right-hand side is an 
effectively computable rational subset of the free group $G_k$, with the same decidability conclusion as above;
\item a question whether an intersection of two computable rational subsets of a free group (which itself is rational and 
effectively computable by \cite[Corollary 3.4(i)]{BS}) is empty, which by Benois' Theorem essentially reduces to the 
(decidable) emptiness problem for rational languages.
\end{itemize}
Therefore, we conclude that, under the given conditions, each of the problems $\mathbf{P}(G_1,\dots,G_m;\rho_1,\dots,\rho_{m-1};
\xi_s)_{s\not\in X}$ is decidable.
This completes the proof of the theorem.
\end{proof}

The biordered set of idempotents of $\mathcal{T}_n$, the monoid of all transformations of an $n$-element
set, arises from its idempotent generated subsemigroup $\langle E(\mathcal{T}_n)\rangle=(\mathcal{T}_n\setminus\mathcal{S}_n)
\cup\{\mathrm{id}_n\}$. These are both regular monoids, and the structure of their (necessarily regular)
$\D$-classes is well known: they form a chain $D_n>D_{n-1}>D_{n-2}> \dots >D_1$ so that $D_r$ consists precisely
of all transformations of rank $r$ (where rank of a self-map of $n$ is the size of its image). Hence the regular
$\D$-classes of $\ig{\cE_{\mathcal{T}_n}}$ also form a chain of length $n$, and the main result of \cite{GR2}
(as well the subsequent discussion in the final section of that paper) provides the information about the
maximal subgroups. If $\ol{D_r}$ denotes the $\D$-class of $\ig{\cE_{\mathcal{T}_n}}$ corresponding to
$D_r$ then it is, naturally, trivial for $r=n$, for $r=n-1$ the maximal subgroup of $\ol{D_{n-1}}$ is the
free group of rank $\binom{n}{2}-1$, while for $r\leq n-2$ the maximal subgroup of $\ol{D_r}$ is the symmetric
group $\mathcal{S}_r$, just as in $\mathcal{T}_n$.

Analogous statements for $\mathcal{PT}_n$, the monoid of all partial transformations of an $n$-element
set, follow from the results of the paper \cite{Do2} (only the rank of the free group arising from the $\D$-class
of rank $n-1$ partial maps will be different). Therefore, the biorders of both $\mathcal{T}_n$ and $\mathcal{PT}_n$
satisfy the assumptions of Theorem \ref{thm:appl}, allowing us to deduce the following conclusion.

\begin{cor}\label{TnPTn}
For any $n\geq 1$, the free idempotent generated semigroups $\ig{\cE_{\mathcal{T}_n}}$ and
$\ig{\cE_{\mathcal{PT}_n}}$ have decidable word problems.
\end{cor}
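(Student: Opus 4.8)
The plan is to obtain Corollary \ref{TnPTn} directly from Theorem \ref{thm:appl}; the entire task is therefore to verify that both $\cE_{\mathcal{T}_n}$ and $\cE_{\mathcal{PT}_n}$ satisfy its hypothesis, namely that every \emph{non-maximal} regular $\D$-class of the corresponding free idempotent generated semigroup has a finite maximal subgroup. I would begin with $\mathcal{T}_n$. Its idempotent generated subsemigroup $\langle E(\mathcal{T}_n)\rangle=(\mathcal{T}_n\setminus\mathcal{S}_n)\cup\{\mathrm{id}_n\}$ is a regular monoid whose (necessarily regular) $\D$-classes form a chain $\{\mathrm{id}_n\}>D_{n-1}>D_{n-2}>\dots>D_1$, with $D_r$ the set of all rank-$r$ transformations. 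Using the structural facts recalled in Section \ref{sec:prelim} --- the existence of the surjection $\ig{\cE}\to S$ for idempotent generated $S$ mapping $\R$- and $\L$-classes onto their counterparts, together with Remark \ref{rem:RLD} and Lemma \ref{lem:DJ} --- it follows that $\ig{\cE_{\mathcal{T}_n}}$ has exactly the same number of regular $\D$-classes, with matching shapes and the same $\J$-order; so its regular $\D$-classes form a chain $\ol{D_n}>\ol{D_{n-1}}>\dots>\ol{D_1}$, where $\ol{D_n}$ is the trivial class of the identity.

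Next I would feed in the maximal subgroups. By the main result of \cite{GR2} and the discussion in its final section, the maximal subgroup of $\ol{D_n}$ is trivial, that of $\ol{D_{n-1}}$ is free of rank $\binom{n}{2}-1$, and that of $\ol{D_r}$ for $r\le n-2$ is the (finite) symmetric group $\mathcal{S}_r$. In the sense of the definition given just before Theorem \ref{thm:appl} --- which, in the presence of the identity $\ol{\mathrm{id}_n}$, declares $D$ maximal precisely when no regular $\D$-class lies strictly between $D$ and the identity class --- the maximal $\D$-class here is $\ol{D_{n-1}}$ (consistent with the remark that a maximal $\D$-class admits no singular square and hence carries a free or trivial group). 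Therefore every non-maximal regular $\D$-class is among $\ol{D_1},\dots,\ol{D_{n-2}}$, together with the trivial top class, and all of these have finite maximal subgroups. Theorem \ref{thm:appl} then yields that $\ig{\cE_{\mathcal{T}_n}}$ has a decidable word problem. For small $n$ (in particular $n=1$, and $n=2$ where $\binom{n}{2}-1=0$) the chain degenerates and every maximal subgroup is trivial, so the conclusion is immediate.

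The argument for $\mathcal{PT}_n$ is entirely parallel, the only modification being that the rank of the free group attached to the $\D$-class of rank-$(n-1)$ partial maps is the one computed in \cite{Do2} rather than in \cite{GR2}; all lower $\D$-classes again have maximal subgroup $\mathcal{S}_r$ ($r\le n-2$), while the bottom $\D$-class (the zero, i.e.\ the empty partial map) is trivial. Hence $\cE_{\mathcal{PT}_n}$ also satisfies the hypothesis of Theorem \ref{thm:appl}, and $\ig{\cE_{\mathcal{PT}_n}}$ has a decidable word problem. I do not anticipate a real obstacle here: all the substance is already contained in Theorem \ref{thm:appl} and in the cited subgroup computations. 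The one point needing care is the bookkeeping around the paper's notion of a \emph{maximal} $\D$-class in the presence of an identity element --- one must make sure the $\D$-class $\ol{D_{n-1}}$ carrying the (possibly nontrivial) free group is counted as maximal, and hence exempt from the finiteness requirement, after which the corollary follows.
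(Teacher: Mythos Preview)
Your proposal is correct and follows essentially the same route as the paper: the corollary is deduced immediately from Theorem \ref{thm:appl} once one verifies, via the chain structure of $\D$-classes and the subgroup computations of \cite{GR2} and \cite{Do2}, that every non-maximal regular $\D$-class of $\ig{\cE_{\mathcal{T}_n}}$ (resp.\ $\ig{\cE_{\mathcal{PT}_n}}$) has finite maximal subgroup. Your extra care with the identity-class bookkeeping and the degenerate small-$n$ cases is harmless (and arguably a welcome clarification), though note that under the paper's definition the class $\{\ol{\mathrm{id}_n}\}$ itself also counts as maximal, so it need not be listed among the non-maximal classes --- this does not affect the argument since its group is trivial anyway.
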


Since the $\D$-class structure of matrix monoids $M_n(Q)$ over a division ring $Q$ (and its idempotent generated
subsemigroup $\langle E(M_n(Q))\rangle=(M_n(Q)\setminus GL_n(Q))\cup\{I_n\}$) is also well known -- the relation
$\D=\J$ simply classifies matrices according to their rank -- it would be interesting to determine whether the
biordered set $\cE_{M_n(Q)}$ falls under the scope of Theorem \ref{thm:appl}, provided $Q$ is a finite field.
We recall that it was proved in \cite{DG} that if $r<n/3$ then the maximal subgroup of $\ig{\cE_{M_n(Q)}}$
contained in its $\D$-class $\ol{D_r}$ corresponding to rank $r$ matrices is $GL_r(Q)$. Meanwhile, computational
evidence has arisen \cite{OBr} that the bound $r<n/3$ might be sharp, so that the main result of \cite{DG}
is no longer true in higher rank $\D$-classes. Therefore, it seems sensible to ask the following question.

\begin{prb}
Let $Q$ be a finite field. Is the maximal subgroup of $\ig{\cE_{M_n(Q)}}$ contained in its $\D$-class $\ol{D_r}$
(corresponding to matrices of rank $r$) finite whenever $r\leq n-2$ ?
\end{prb}

\section{Application II: The weakly abundant property}
\label{sec:appl2}

There is a particularly neat way of expressing regularity in semigroups: namely, an element $a$ of a semigroup $S$ is
regular if and only if there exist idempotents $e,f\in E(S)$ such that $e\;\L\;a\;\R\;f$ (see \cite[Proposition 2.3.2]{HoBook}). 
In other words, $S$ is regular if and only if each $\R$-class and each $\L$-class of $S$ contains an idempotent.
By imposing an analogous condition for generalisations of Green's relations $\R$ and $\L$, namely $\R^*$ and $\L^*$, and also
$\tR$ and $\tL$, we arrive at notions of abundant and weakly abundant semigroups, respectively.

Now we briefly recall some basic definitions and results from \cite{ElQ, Fountain, Lawson}. Let $S$ be a semigroup. The
relations $\R^*$ and $\L^*$ are defined on $S$ by
$$
a\;\R^*\;b ~\Leftrightarrow~(\forall x,y\in S^1)~(xa=ya\Leftrightarrow xb=yb)
$$
and 
$$
a\;\L^*\;b ~\Leftrightarrow~(\forall x,y\in S^1)~(ax=ay\Leftrightarrow bx=by)
$$
for any $a,b\in S$. As remarked in \cite{Fountain}, it is easy to see that $\R\subseteq \R^*$ and $\L\subseteq \L^*$ in any semigroup, 
and that we have $\R= \R^*$ and $\L=\L^*$ whenever $S$ is regular. Furthermore, we denote by $\H^*$ the intersection $\R^*\cap\L^*$, 
and by $\D^*$ the join $\R^*\vee\L^*$. Note that, unlike for ordinary Green's relations, in general we have $\R^*\circ \L^*\neq\L^*\circ\R^*$ 
(see \cite[Example 1.11]{Fountain}). A semigroup $S$ is \emph{abundant} if each $\L^*$-class and each $\R^*$-class contains an idempotent. 
The role that the relations $\R^*$, $\L^*$, $\H^*$ and $\D^*$ play in the theory of abundant semigroups is analogous to that of 
Green's relations in the theory of regular semigroups.

\begin{lem}\cite{Fountain}
Let $S$ be a semigroup with $a\in S$ and $e\in E(S)$. The following statements are equivalent:
\begin{itemize}
\item[(i)] $a\;\R^*\;e$;
\item[(ii)] $ea=a$ and for any $x,y\in S^1$, $xa=ya$ implies $xe=ye$.
\end{itemize}
\end{lem}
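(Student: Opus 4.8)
The plan is to unwind both implications directly from the defining biconditional of $\R^*$, using only that $e$ is idempotent and that $e \in S \subseteq S^1$, so that $e$ itself is an admissible value for the universally quantified variables appearing in the definition of $\R^*$.

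First I would treat (i) $\Ra$ (ii). Assuming $a \;\R^*\; e$, the second assertion of (ii) --- that $xa = ya$ forces $xe = ye$ for all $x, y \in S^1$ --- is precisely the forward half of the biconditional $xa = ya \Lra xe = ye$, so no work is required there. To obtain $ea = a$, I would instantiate the \emph{backward} half of that biconditional at $(x,y) = (1,e)$: since $e$ is idempotent, $1\cdot e = e = e\cdot e$, and hence $\R^*$ yields $1\cdot a = e\cdot a$, that is, $a = ea$.

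Then I would treat (ii) $\Ra$ (i). Here I must recover the full biconditional $xa = ya \Lra xe = ye$ for arbitrary $x, y \in S^1$. The left-to-right implication is exactly the second hypothesis in (ii). For the converse, suppose $xe = ye$; multiplying on the right by $a$ gives $xea = yea$, and since $ea = a$ by the first hypothesis in (ii), this reads $xa = ya$. This completes the equivalence, and hence the proof.

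I do not expect a genuine obstacle here: the only steps that require a moment's attention are noting that $e$ may legitimately be substituted for the quantified variables (because $e \in S^1$) and invoking idempotency $ee = e$ at the single place where it is needed. The whole argument is the exact left--right dual of the standard characterisation of $\L^*$ recorded in \cite{Fountain}.
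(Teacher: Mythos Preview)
Your proof is correct; the two implications are exactly the standard unwinding of the definition of $\R^*$ with the idempotency of $e$ used at the single necessary point. Note that the paper does not supply its own proof of this lemma---it is merely quoted from \cite{Fountain}---so there is nothing to compare against.
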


There is yet another extension of Green's relations (and, in fact, of their starred counterparts), introduced in \cite{ElQ} and further
studied in \cite{Lawson}, that is useful for non-abundant semigroups. Define relations $\tR$ and $\tL$ on a semigroup $S$ by
$$
a\;\tR\;b ~\Leftrightarrow~(\forall e\in E(S))~(ea=a\Leftrightarrow eb=b)
$$
and 
$$
a\;\tL\;b ~\Leftrightarrow~ (\forall e\in E(S))~(ae=a\Leftrightarrow be=b)
$$
for any $a,b\in S$. Clearly, $\R^*\subseteq \tR$ and $\L^*\subseteq \tL$. If $S$ is abundant, then $\R^*=\tR$ and $\L^*=\tL$
(see, for example, \cite[Theorem 1.5]{Lawson}). While $\R^*$ is always a left congruence and $\L^*$ is always a right congruence on
any semigroup $S$, this is not necessarily true for $\tR$ and $\tL$ (see \cite[Example 3.6]{Lawson}). A semigroup $S$ is 
\emph{weakly abundant} (or, following \cite{Ma,MS}, a \emph{Fountain semigroup}) if each $\tR$-class and each $\tL$-class contains 
an idempotent.  We say that a weakly abundant semigroup $S$ \emph{satisfies the congruence condition} if $\tR$ is a left congruence 
and $\tL$ is a right congruence. So, any abundant semigroup is weakly abundant with the congruence condition.

\begin{lem}\cite{Lawson}\label{MVL}
Let $S$ be a semigroup with $a\in S$ and $e\in E(S)$. The following statements are equivalent:
\begin{itemize}
\item[(i)] $a\;\tR\;e$;
\item[(ii)] $ea=a$ and for any $f\in E(S)$, $fa=a$ implies $fe=e$.
\end{itemize}
\end{lem}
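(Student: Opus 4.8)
The plan is to argue entirely from the definition of $\tR$: no structural facts about $\ig{\cE}$ or weakly abundant semigroups are needed here, so this will be a short formal exercise of exactly the same flavour as the starred characterisation recalled just above from \cite{Fountain} (indeed it is slightly easier, since for $\tR$ one only ever tests against idempotents, not against arbitrary elements of $S^1$).

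First I would treat the implication (i)~$\Rightarrow$~(ii). Assuming $a\;\tR\;e$, I apply the defining equivalence of $\tR$ with the idempotent $e$ itself playing the role of the test idempotent; since $ee=e$ holds, the equivalence $(ee=e)\Leftrightarrow(ea=a)$ forces $ea=a$. Next, given an arbitrary $f\in E(S)$ with $fa=a$, I apply the defining equivalence of $\tR$ with test idempotent $f$ to get $(fa=a)\Leftrightarrow(fe=e)$, whence $fe=e$. This is precisely (ii).

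For the converse (ii)~$\Rightarrow$~(i), I must verify $ga=a\Leftrightarrow ge=e$ for every $g\in E(S)$. The forward direction is exactly the implication assumed in (ii). For the reverse direction, suppose $ge=e$; then, using $ea=a$ (also part of (ii)) together with associativity, $ga=g(ea)=(ge)a=ea=a$, completing the argument and hence the proof.

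I do not anticipate any genuine obstacle: the only point needing a moment's care is that the idempotency of $e$, i.e.\ the equation $ee=e$, is what lets one extract $ea=a$ from (i); beyond that, each half is a single invocation of the definition or one line of associativity. This is the $\tR$-analogue of the classical descriptions of $\R$ and $\R^*$, and it is the basic tool that will be used in this section to recognise when $\ig{\cE}$ is weakly abundant and satisfies the congruence condition.
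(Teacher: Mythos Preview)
Your proof is correct and entirely standard. Note that the paper does not supply its own proof of this lemma at all: it is quoted from \cite{Lawson} as background, so there is nothing to compare against beyond confirming that your argument from the definition of $\tR$ is valid, which it is.
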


As yet another application of the main results of this work, we show that for any finite biordered set $\cE$, $\ig{\cE}$ is a 
Fountain semigroup with the congruence condition. To this end, we need the following notion. A word $w$ with a minimal 
r-factori\-sation $w=p_1\dots p_m$ is said to be in \emph{reduced form} if all letters of each factor are seeds. Notice 
that every word from $E^+$ is equivalent in $\ig{\cE}$ to a word in reduced form. To see this, let $w\in E^+$. We know that 
$w$ has a minimal r-factorisation $p_1\dots p_m$ such that all $\ol{p_i}$, $1\leq i\leq m$, are regular and no nontrivial 
product of consecutive $\ol{p_i}$'s is regular. Now it follows from Lemma \ref{lem:FG} that $\ol{p_i}=\ol{e_{i1}\dots e_{il(i)}}$ 
holds for some $e_{ij}\in E$ such that $\ol{e_{ij}}\in D_{\ol{p_i}}$ for all $1\leq j\leq l(i)$; thus we obtain a word $w'$ 
in reduced form such that $\ol{w}=\ol{w'}$.

\begin{thm}\label{thm:Fountain}
Let $\cE$ be a finite biordered set. Then the free idempotent generated semigroup $\ig{\cE}$ is a Fountain semigroup 
satisfying the congruence condition.
\end{thm}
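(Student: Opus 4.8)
The plan is to verify the two requirements in turn; in each case a left--right symmetric argument reduces matters to the relation $\tR$, the statements for $\tL$ following by duality. I will always work with words in reduced form and exploit the following observation: if $w = p_1\cdots p_m$ is a reduced minimal r-factorisation, then the first letter $a$ of $p_1$ is an idempotent which, being a seed of $p_1$ occurring at position $1$, satisfies $\ol a\;\R\;\ol{p_1}$; consequently $\ol a\,\ol w = \ol{ap_1\cdots p_m} = \ol w$ (using $\ol a\,\ol a = \ol a$). Dually the last letter of $p_m$ is an idempotent $\L$-related to $\ol{p_m}$. Since $\tR$ depends only on the represented element of $\ig{\cE}$, and every element is represented by some reduced word, these choices are harmless.

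\medskip
\noindent\emph{Weak abundance.} Given $\ol w$, fix a reduced minimal r-factorisation $w = p_1\cdots p_m$ and let $a$ be the first letter of $p_1$; I claim $\ol w\;\tR\;\ol a$, which I verify via Lemma \ref{MVL}. Condition (i) is the identity $\ol a\,\ol w = \ol w$ noted above. For condition (ii), let $\ol f$ be an idempotent with $\ol f\,\ol w = \ol w$; we must deduce $\ol f\,\ol a = \ol a$. Now $(f,p_1,\dots,p_m)$ is an r-factorisation of the word $fw$; take a $\preceq$-minimal r-factorisation of $fw$ lying below it. By Theorem \ref{D-fing} applied to $\ol{fw} = \ol w$, that minimal r-factorisation has exactly $m$ factors, so it arises from $(f,p_1,\dots,p_m)$ by merging a single adjacent pair. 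Merging some $p_ip_{i+1}$ is impossible, since $\ol{p_ip_{i+1}}$ is not regular by minimality of $w = p_1\cdots p_m$; hence the merge is of $f$ with $p_1$, so $\ol{fp_1}$ is regular and $(fp_1,p_2,\dots,p_m)$ is a minimal r-factorisation of $fw$. Theorem \ref{D-fing} then gives $\ol{fp_1}\;\R\;\ol{p_1}$; writing $p_1 = av$ we have $\ol a\;\D\;\ol{fp_1} = \ol{fav}$, so Remark \ref{the-rem}(ii) yields $\ol{fa}\;\L\;\ol a$, while $\R$ being a left congruence together with $\ol a\;\R\;\ol{av}$ gives $\ol{fa}\;\R\;\ol{fav} = \ol{fp_1}\;\R\;\ol a$. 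Thus $\ol{fa}$ lies in the maximal subgroup $H_{\ol a}$; letting $\ol b$ be its inverse there and using $\ol f\,\ol{fa} = \ol f\,\ol f\,\ol a = \ol f\,\ol a = \ol{fa}$, right multiplication by $\ol b$ gives $\ol f\,\ol a = \ol a$, as required. So every $\tR$-class, and dually every $\tL$-class, of $\ig{\cE}$ contains an idempotent, i.e.\ $\ig{\cE}$ is a Fountain semigroup.

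\medskip
\noindent\emph{Congruence condition.} It remains to show $\tR$ is a left congruence (dually $\tL$ a right congruence). From the paragraph above together with the inclusion $\R\subseteq\tR$ one gets the criterion: for $\ol u,\ol v\in\ig{\cE}$ with $a_u,a_v$ the first letters of reduced forms, $\ol u\;\tR\;\ol v$ iff $\ol{a_u}\;\R\;\ol{a_v}$. Indeed $\ol u\;\tR\;\ol{a_u}$ and $\ol v\;\tR\;\ol{a_v}$, and for idempotents one checks $\tR\cap(E\times E) = \R\cap(E\times E)$ in $\ig{\cE}$ by testing the defining condition of $\tR$ for $\ol{a_u},\ol{a_v}$ against themselves, obtaining $\ol{a_v}\,\ol{a_u} = \ol{a_u}$ and $\ol{a_u}\,\ol{a_v} = \ol{a_v}$. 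Since $\ig{\cE}$ is generated by the idempotents $\ol c$ ($c\in E$), it suffices to prove that $\ol{a_u}\;\R\;\ol{a_v}$ implies $\ol{a_{cu}}\;\R\;\ol{a_{cv}}$ for every $c\in E$. As $\R$ is a left congruence, $\ol{c\,a_u}\;\R\;\ol{c\,a_v}$, and hence (again via $\R\subseteq\tR$ and the criterion) $\ol{a_{c\,a_u}}\;\R\;\ol{a_{c\,a_v}}$; so it is enough to establish $\ol{a_{cu}}\;\R\;\ol{a_{c\,a_u}}$ — that left multiplication by $\ol c$ produces the same first-letter $\R$-class from $\ol u$ as from its canonical idempotent $\ol{a_u}$.

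\medskip
For this, take the reduced minimal r-factorisation $u = p_1\cdots p_m$ with first letter $a = a_u$, so $\ol a\;\R\;\ol{p_1}$ and $\ol{ca}\;\R\;\ol{cp_1}$. If $\ol{cp_1}$ is not regular then neither is $\ol{ca}$, and $(c,p_1,\dots,p_m)$ is a reduced minimal r-factorisation of $cu$: it is an r-factorisation, $c$ is a seed of its own one-letter factor, and no adjacent pair can be merged (the first because $\ol{cp_1}$ is not regular, the rest by minimality of $u = p_1\cdots p_m$); hence $\ol{a_{cu}}\;\R\;\ol c\;\R\;\ol{a_{c\,a_u}}$. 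If $\ol{cp_1}$ is regular, say in the $\D$-class $D'$, the heart of the matter is a bounded-absorption lemma guaranteeing that the first factor of a minimal r-factorisation of $cu$ is $\R$-related to $\ol{cp_1}$ (whence $\ol{a_{cu}}\;\R\;\ol{cp_1}\;\R\;\ol{ca}\;\R\;\ol{a_{c\,a_u}}$, since a regular element's reduced form begins with an idempotent of its own $\R$-class). To prove it one analyses a seed of the word $cp_1$: as $c$ is a single letter, that seed is either $c$ itself, whence $\ol c\;\R\;\ol{cp_1}$ and $D' = D_{\ol c}$, or a letter of $p_1$, which — $p_1$ being reduced — is also a seed of $p_1$ and therefore lies in $D_1 := D_{\ol{p_1}}$, forcing $D' = D_1$. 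The case $D' = D_1$ is quickly handled: then $\ol{cp_1}\;\L\;\ol{p_1}$ inside the completely $0$-simple principal factor of $D_1$ (Lemma \ref{lem:DJ}), so $\ol{cp_1p_2\cdots p_j}\;\L\;\ol{p_1p_2\cdots p_j}$ for every $j$ ($\L$ being a right congruence), and the latter is non-regular for $j\ge 2$ by minimality of $u = p_1\cdots p_m$; hence $(cp_1,p_2,\dots,p_m)$ is already minimal. The residual case $D' = D_{\ol c}$, in which $\ol{cp_1}\;\R\;\ol c$, is the main technical obstacle: one must show that left multiplying the tail of $u$ by $\ol{cp_1}$ never produces a regular element outside $R_{\ol c}$ (equivalently, that any further absorption keeps the accumulated factor $\R$-related to $\ol c$), which is done by pushing the argument along $p_2,p_3,\dots$, using that each $\ol{p_i}$ is a product of idempotents in its own $\D$-class (Lemma \ref{lem:FG}) together with the $\D$-class comparisons forced by Proposition \ref{action}. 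Granting this lemma, the reduction is complete and $\tR$ (dually $\tL$) is a one-sided congruence, finishing the proof.
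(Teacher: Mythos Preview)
Your weak abundance argument is sound and close in spirit to the paper's: you reach $\ol{fp_1}\;\R\;\ol{p_1}$ via Theorem~\ref{D-fing} (the paper gets there via Lemma~\ref{lem1}), and then your $H$-class trick is just a slightly longer way of extracting $\ol{f}\,\ol{a}=\ol{a}$ from $\ol{fa}\;\R\;\ol{a}$.

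The congruence argument, however, has a genuine gap that you yourself flag with ``Granting this lemma''. In the case $D'=D_{\ol c}$ (that is, $\ol c\;\R\;\ol{cp_1}$) you need to know that for \emph{every} $j\ge 2$ with $\ol{cp_1\cdots p_j}$ regular one still has $\ol{cp_1\cdots p_j}\;\R\;\ol c$; otherwise the first factor of a minimal r-factorisation of $cu$ may fail to lie in $R_{\ol c}$ and your chain $\ol{a_{cu}}\;\R\;\ol{cp_1}\;\R\;\ol{ca}\;\R\;\ol{a_{ca}}$ breaks. Your suggestion to ``push along $p_2,p_3,\dots$'' using Lemma~\ref{lem:FG} and Proposition~\ref{action} does not do this: Proposition~\ref{action} describes how idempotents act on Rees coordinates \emph{within} a fixed regular $\D$-class and says nothing about whether a further product stays regular or where its seed lies. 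What is actually needed is a seed-location argument: if $\ol{cp_1\cdots p_j}$ is regular with seed $g$ lying in $p_r$, then $D_{\ol{p_r}}=D_{\ol g}=D_{\ol{cp_1\cdots p_j}}\le D_{\ol{p_1\cdots p_j}}\le D_{\ol{p_r}}$, forcing $\ol{p_1\cdots p_j}$ to be regular and hence $j=1$; thus for $j\ge 2$ the only seed is $c$, giving $\ol c\;\R\;\ol{cp_1\cdots p_j}$ by Remark~\ref{the-rem}. This is exactly the analysis the paper carries out (for general $z=q_1\cdots q_k$ rather than a single letter $c$) before its Case~(1)--(3) split, and it is the missing ingredient in your proof. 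Once you insert it, your single-letter reduction goes through and is in fact a pleasant streamlining of the paper's argument.
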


\begin{rmk}
Throughout the following proof, we will repeatedly use the following argument: if $u,v\in E^+$ are such that $\ol{uv}$ is regular
in $\ig{\cE}$, with a seed $e$ lying in $u$, then $\ol{u}$ is regular, $e$ is a seed for $u$, and $\ol{u}\;\R\;\ol{uv}$. Indeed,
if $u=u_1eu_2$ so that $\ol{u_1e}\;\L\;\ol{e}\;\R\;\ol{eu_2v}$, then also $\ol{eu_2}\;\R\;\ol{e}$, and now by Remark \ref{the-rem}
we have that $\ol{u}$ is regular with $e$ being a seed for $u$. Furthermore, since $\R$ is a left congruence, $\ol{u}=\ol{u_1eu_2}
\;\R\;\ol{u_1eu_2v}=\ol{uv}$. Dually, if $uv$ has a seed $f$ lying in $v$, then $\ol{v}$ is regular (with $f$ being a seed for $v$)
and we have $\ol{v}\;\L\;\ol{uv}$.
\end{rmk}

\begin{proof}[Proof of Theorem \ref{thm:Fountain}]
Let $w\in E^+$. By the remarks preceding the statement of the theorem, there is no loss of generality in assuming that $w$ is already 
in reduced form, so that we have a minimal r-factorisation $w=p_1\dots p_m$ such that each letter of each factor $p_i$, $1\leq i\leq m$, 
is a seed for $p_i$. Hence, we can write $p_i=e_{i1}\dots e_{il(i)}$ so that for all $1\leq j\leq l(i)$ we have $\ol{e_{ij}}\in D_{\ol{p_i}}$.

Let $e=e_{11}$. Clearly, $\ol{e}\,\ol{w}=\ol{ew}=\ol{w}$. Now assume that $f\in E$ is such that $\ol{f}\,\ol{w}=\ol{fw}=\ol{w}$. 
By repeated applications of Lemma \ref{lem1}, we conclude that $fw$ has a minimal r-factorisation $fw = p_1'\dots p_m'$ such that 
each $\ol{p_i'}$ is regular, no nontrivial product of consecutive $\ol{p_i'}$'s is regular, and $\ol{p_i'}\;\D\;\ol{p_i}$ for all 
$1\leq i\leq m$. But $p_1'=fp$ for some (possibly empty) word $p$, so we have, by Theorem \ref{D-fing}, and since $e$ is a seed for $p_1$,
$$
\ol{fp} = \ol{p_1'} \;\R\; \ol{p_1} \;\R\; \ol{e},
$$
from which it follows that $\ol{fe}=\ol{e}$. Hence, $\ol{w}\;\tR\;\ol{e}$. Dually, 
$\ol{w}\;\tL\;\ol{e_{ml(m)}}$, so that $\ig{\cE}$ is a weakly abundant (i.e.\ Fountain) semigroup, as required.

Next we show that $\ig{\cE}$ satisfies the congruence condition. To this end, assume that $w_1,w_2,z\in E^+$ are (already) in 
reduced form, and that $\ol{w_1}\;\tR\;\ol{w_2}$. Let $w_1=p_1\dots p_m$, $w_2=p_1'\dots p_s'$ and $z=q_1\dots q_k$ be the
corresponding reduced minimal r-factorisations, so that every letter is a seed for its corresponding factor. Recall that 
we have already proved that $\ol{w_1}\;\tR\;\ol{e_1}$, where $e_1$ is the first letter of $p_1$, and similarly, 
$\ol{w_2}\;\tR\;\ol{e_2}$, where $e_2$ is the first letter of $p_1'$. Therefore, $\ol{e_1}\;\tR\;\ol{e_2}$, and since 
$\ol{e_1},\ol{e_2}$ are idempotents, Lemma \ref{MVL} easily implies that in fact $\ol{e_1}\;\R\;\ol{e_2}$. Since $\R$ 
is always a left congruence, we have $\ol{ze_1}\;\R\;\ol{ze_2}$. So, if we prove that $\ol{zw_1}\;\tR\;\ol{ze_1}$ holds 
(while also proving $\ol{zw_2}\;\tR\;\ol{ze_2}$ by way of analogy), we would obtain 
$\ol{zw_1}\;\tR\;\ol{ze_1}\;\R\;\ol{ze_2}\;\tR\;\ol{zw_2}$ and thus achieve our goal of showing $\ol{zw_1}\;\tR\;\ol{zw_2}$.

Consider now the word $zw_1=q_1\dots q_kp_1\dots p_m$, and assume that for some $i,j$ we have that 
$\ol{q_i\dots q_kp_1\dots p_j}$ is regular. Then the word $q_i\dots q_kp_1\dots p_j$ contains at least one seed $g$. 
If this seed lies within $q_l$ for some $i\leq l\leq k$, then 
$$D_{\ol{q_l}}=D_{\ol{g}}=D_{\ol{q_i\dots q_kp_1\dots p_j}} \leq D_{\ol{q_i\dots q_k}} \leq D_{\ol{q_l}}$$
(where the first equality follows from Theorem \ref{thm:DGR} and Remark \ref{the-rem}). Therefore, $\ol{q_i\dots q_k}\;\D\;\ol{q_l}$, 
implying that $\ol{q_i\dots q_k}$ is regular, which is possible only if $i=l=k$. Similarly, if $g$ lies within $p_r$ for some 
$1\leq r\leq j$, then we must have $1=r=j$. It follows that products of consecutive factors within $q_1\dots q_kp_1\dots p_m$ 
that represent regular elements of $\ig{\cE}$ can only be either of the form $q_kp_1\dots p_j$, or of the form $q_i\dots q_kp_1$. 
In other words, $zw_1$ has a minimal r-factorisation of one of the forms:
\begin{itemize}
\item $q_1\dots q_kp_1\dots p_m$,
\item $q_1\dots q_{k-1}(q_kp_1\dots p_j)p_{j+1}\dots p_m$ (with a seed for $q_kp_1\dots p_j$ lying within $q_k$), or 
\item $q_1\dots q_{i-1}(q_i\dots  q_kp_1)p_2\dots p_m$ (with a seed for $q_i\dots  q_kp_1$ lying within $p_1$),
\end{itemize} 
and these can be turned into reduced form by applying Lemma \ref{lem:FG} to the elements represented by subwords in parentheses.

An analogous statement can be formulated for the word $ze_1$ with $m=1$ and $e_1$ in the role of $p_1$. Bearing this in mind,
we have three cases to consider.

\emph{Case (1):} $ze_1 = q_1\dots q_k e_1$ is a minimal r-factorisation, as written. In particular, $\ol{q_ke_1}$ is not regular.
We claim that 
$$zw_1=q_1\dots q_kp_1\dots p_m$$ 
is also a minimal r-factorisation, as written. Indeed, if $\ol{q_kp_1\dots p_j}$ is regular for some $j\leq m$, with a seed 
lying within $q_k$, then (i) of Remark \ref{the-rem} implies 
$$\ol{q_k}\;\R\;\ol{q_kp_1\dots p_j}\;\R\;\ol{q_ke_1},$$ 
contradicting the non-regularity of $\ol{q_ke_1}$. On the other hand, if $\ol{q_i\dots  q_kp_1}$ is regular 
for some $i\geq 1$, with a seed lying within $p_1$, then (again by (i) of Remark \ref{the-rem}) we have 
$$\ol{p_1}\;\L\;\ol{q_i\dots  q_kp_1}\;\L\;\ol{q_kp_1},$$ 
implying the regularity of $\ol{q_kp_1}$. However, since $e_1$ is a seed for $p_1$, $\ol{e_1}\;\R\;\ol{p_1}$,
and so $\ol{q_ke_1}\;\R\;\ol{q_kp_1}$, thus $\ol{q_ke_1}$ is regular, a contradiction. Since now both $zw_1$ and $ze_1$ have
reduced form minimal r-factorisations whose first factors are $q_1$, the facts already proved imply that 
$\ol{zw_1}\;\tR\;\ol{f}\;\tR\;\ol{ze_1}$, where $f$ is the first letter of $q_1$.

\emph{Case (2):} $ze_1 = q_1\dots q_{i-1}(q_i\dots q_k e_1)$ is a minimal r-factorisation for some $i>1$, whose reduced form is 
obtained by applying Lemma \ref{lem:FG} to the `parenthesised' suffix $q_i\dots q_k e_1$. We have that either $e_1$ is a seed for 
this suffix factor, or $i=k$ and a seed for $q_ke_1$ lies within $q_k$. Since $\ol{e_1}\;\R\;\ol{p_1}$ and $\R$ is a left congruence,
we conclude 
$$\ol{q_i\dots q_ke_1}\;\R\;\ol{q_i\dots q_kp_1},$$
which implies that $\ol{q_i\dots q_kp_1}$ must be regular. We claim that 
$$zw_1 = q_1\dots q_{i-1}(q_i\dots  q_kp_1)p_2\dots p_m$$
is a minimal r-factorisation if $i<k$, while if $i=k$ there is a minimal r-factorisation  of $zw_1$ of the form 
$$zw_1=q_1\dots q_{k-1}(q_kp_1\dots p_j) p_{j+1}\dots p_m$$ 
for some $1\leq j\leq m$ (here $j\geq 1$ is the maximal index with the property that $\ol{q_kp_1\dots p_j}$ is regular, 
which exists, as we already know in this subcase that $\ol{q_kp_1}$ is regular). According to our previous analysis, there is only 
one way in which this could fail: that $\ol{q_l\dots q_i\dots q_kp_1}$ is regular for some $l<i$ (with a seed lying within 
$q_l\dots q_kp_1$). However, this is impossible, as $\ol{q_l\dots q_ke_1}\;\R\;\ol{q_l\dots q_kp_1}$ implies that $\ol{q_l\dots q_ke_1}$ 
is regular, a contradiction. Summing up, we again have that both $zw_1$ and $ze_1$ have reduced form minimal r-factorisations whose 
first factors are $q_1$, and so the required conclusion follows just as in Case (1).

\emph{Case (3):} $\ol{ze_1}$ is regular, so that Lemma \ref{lem:FG} can be applied to the entire product $q_1\dots q_k e_1$. 
Now we have 
$$\ol{ze_1}=\ol{q_1\dots q_ke_1}\;\R\;\ol{q_1\dots q_kp_1},$$ 
just as in Case (2), so $\ol{q_1\dots q_kp_1}$ is a regular element of $\ig{\cE}$. Suppose first that $k>1$. Then the seed
of the product $q_1\dots q_k e_1$ is $e_1$, and it immediately follows that 
$$zw_1=(q_1\dots q_kp_1)p_2\dots p_m$$ 
is a minimal r-factorisation, whose reduced form is obtained by applying Lemma \ref{lem:FG} to the prefix $q_1\dots q_kp_1$. 
However, since $\ol{ze_1}\;\R\;\ol{q_1\dots q_kp_1}$, it follows that no matter how we write $\ol{ze_1}=\ol{f_1}\dots$ and 
$\ol{q_1\dots q_kp_1}=\ol{f_2}\dots$ as products of idempotents from $D_{\ol{ze_1}}$, we certainly must have $\ol{f_1}\;\R\;\ol{f_2}$. 
We do know that $\ol{ze_1}\;\tR\;\ol{f_1}$ and $\ol{zw_1}\;\tR\;\ol{f_2}$, thus we arrive at the required conclusion 
$\ol{zw_1}\;\tR\;\ol{ze_1}$. It remains to consider the case $k=1$. Now there are no restrictions regarding the position of the
seed in $q_1e_1$ (it can be either $e_1$ or lying in $q_1$), but we do know that $\ol{q_1p_1}$ is regular. Hence, it follows 
(similarly as in Case (2)) that there is a minimal r-factorisation of the form 
$$zw_1=(q_1p_1\dots p_j)p_{j+1}\dots p_m,$$
where $j\geq 1$ is the maximal index with the property that $\ol{q_1p_1\dots p_j}$ is regular. Since there is a seed of 
$q_1p_1\dots p_j$ lying within one of $q_1,p_1$, we have 
$$\ol{ze_1}=\ol{q_1e_1}\;\R\;\ol{q_1p_1}\;\R\;\ol{q_1p_1\dots p_j},$$ 
and now $\ol{zw_1}\;\tR\;\ol{ze_1}$ follows by an analogous argument as above.

This completes the proof that $\tR$ is a left congruence of $\ig{\cE}$; by left-right duality we can show that $\tL$ is a right
congruence. Therefore, the Fountain semigroup $\ig{\cE}$ satisfies the congruence condition.
\end{proof}

It was proved in \cite{YG} that $\ig{\cE}$ is abundant whenever $\cE$ is the biordered set of a semilattice. On the other hand,
Example 6.5 of the same paper supplies an example of a finite biordered set $\cE$ such that $\ig{\cE}$ is not abundant. Hence,
we finish the paper by posing the following intriguing question.

\begin{prb}
For which finite biordered sets $\cE$ is $\ig{\cE}$ abundant?
\end{prb}

\noindent\textbf{Acknowledgements.}
We are grateful to the anonymous referee for an exceptionally thorough and insightful reading of the manuscript, resulting in 
a number of comments and suggestions (and several short-cuts in proofs) which improved the quality of the paper. The first- and
the second-named author acknowledge the hospitality of the Department of Mathematics of the University of York, UK, where
a significant part of this research have been carried out.


\end{document}